\newtheorem{theo}{Theorem}[section]
\newtheorem{definition}[theo]{Definition}
\newenvironment{pf}{\noindent{\it Proof. }}{$\square$\par\medskip}
\newtheorem{lemma}[theo]{Lemma}
\newtheorem{theorem}[theo]{Theorem}
\newtheorem{corollary}[theo]{Corollary}
\newtheorem{proposition}[theo]{Proposition}
\newtheorem{remark}[theo]{Remark}
\newcommand{\beq}{\begin{equation}}
\newcommand{\eeq}{\end{equation}}
\renewcommand{\i}{\iota}
\newcommand{\q}{\mathrm{q}}
\newcommand{\p}{\mathrm{p}}
\newcommand{\bC}{\mathbb{C}}
\newcommand{\bR}{\mathbb{R}}
\newcommand{\bZ}{\mathbb{Z}}
\newcommand{\bH}{\mathbb{H}}
\newcommand{\bN}{\mathbb{N}}
\newcommand{\pc}{\phantom{c}}
\renewcommand{\gg}{\mathfrak{g}}
\newcommand{\gh}{\mathfrak{h}}
\newcommand{\gm}{\mathfrak{m}}
\newcommand{\so}{\mathfrak{so}}
\newcommand{\spin}{\mathfrak{spin}}
\newcommand{\ggl}{\mathfrak{gl}}
\newcommand{\gheis}{\mathfrak{heis}}
\newcommand\GL{\mathrm{GL}}
\newcommand\Cl{\mathrm{Cl}}
\newcommand\mrp{\mathrm{p}}
\newcommand\T{\mathrm{T}}
\newcommand{\cC}{\mathcal{C}}
\renewcommand{\square}{\kern1pt\vbox
{\hrule height 0.6pt\hbox{\vrule width 0.6pt\hskip 3pt
\vbox{\vskip 6pt}\hskip 3pt\vrule width 0.6pt}\hrule height0.6pt}\kern1pt}
\DeclareMathOperator\ad{ad}
\DeclareMathOperator\End{End\;}
\DeclareMathOperator\Ker{Ker\;}
\DeclareMathOperator\Hom{Hom\;}
\DeclareMathOperator\Bil{Bil\;}
\DeclareMathOperator\Id{Id}
\DeclareMathOperator{\Der}{Der\;}
\DeclareMathOperator{\CoKer}{Coker}
\DeclareMathOperator{\diag}{diag}
\renewcommand\Re{\operatorname{Re}}
\renewcommand\Im{\operatorname{Im}}
\newcommand{\0}{\overline{0}}
\newcommand{\ou}{\overline{1}}
\newcommand{\be}{\begin{equation}}
\newcommand{\ee}{\end{equation}}
\def\<#1,#2>{\langle\,#1,\,#2\,\rangle}
\newcommand{\arr}{\begin{array}{rlll}}
\newcommand{\ea}{\end{array}}
\newcommand{\bea}{\begin{eqnarray}}
\newcommand{\eea}{\end{eqnarray}}
\newcommand{\bean}{\begin{eqnarray*}}
\newcommand{\eean}{\end{eqnarray*}}
\def\sideremark#1{\ifvmode\leavevmode\fi\vadjust{
\vbox to0pt{\hbox to 0pt{\hskip\hsize\hskip1em
\vbox{\hsize3cm\tiny\raggedright\pretolerance10000
\noindent #1\hfill}\hss}\vbox to8pt{\vfil}\vss}}}
\newcounter{ssig}
\newcounter{ttig}
\begin{document}
\title[Superizations of Cahen-Wallach symmetric spaces]{Superizations of \\ Cahen-Wallach symmetric spaces\\  and\\  spin representations of \\ the Heisenberg algebra}
\author[Andrea Santi]{Andrea \ Santi}

%
\maketitle
\setcounter{equation}{0}
\bigskip
{\bf Abstract:} Let $M_{0}=G_{0}/H$ be a $(n+1)$-dimensional Cahen-Wallach Lorentzian symmetric space associated with a symmetric decomposition $\gg_{0}=\gh+\gm$ and let $S(M_{0})$ be the spin bundle defined by the spin representation $\rho:H\rightarrow\GL_{\bR}(S)$ of the stabilizer $H$. 
This article studies the superizations of $M_{0}$, \textit{i.e.} its extensions to a homogeneous supermanifold $M=G/H$ whose sheaf of superfunctions is isomorphic to $\Lambda(S^{*}(M_{0}))$. Here $G$ is a Lie supergroup which is the superization of the Lie group $G_{0}$ associated with a certain extension of the Lie algebra $\gg_{0}$ to a Lie superalgebra $\gg=\gg_{\0}+\gg_{\ou}=\gg_{0}+S$, via the Kostant construction. The construction of the superization $\gg$ consists of two steps: extending the spin representation $\rho:\gh\rightarrow\ggl_{\bR}(S)$ to a representation $\rho:\gg_{0}\rightarrow\ggl_{\bR}(S)$ and constructing appropriate $\rho(\gg_{0})$-equivariant bilinear maps on $S$. Since the Heisenberg algebra $\gheis$ is a codimension one ideal of the Cahen-Wallach Lie algebra $\gg_{0}$, first we describe spin representations of $\gheis$ and then determine their extensions to $\gg_{0}$. There are two large classes of spin representations of $\gheis$ and $\gg_{0}$: the zero charge and the non-zero charge ones. The description strongly depends on the dimension $n+1$ (mod $8$). Some general results about superizations $\gg=\gg_{\0}+\gg_{\ou}$ are stated and examples are constructed.
\tableofcontents
\section*{Introduction}
Lie superalgebras have played an important role in modern physics since the idea of supersymmetry arose. Complex and real simple Lie superalgebras were classified by Kac (see \cite{K, Sc}). This classification was used in \cite{N} to describe superizations (\textit{i.e.} extensions of a Lie algebra $\gg_{0}$ to a Lie superalgebra $\gg=\gg_{\0}+\gg_{\ou}=\gg_{0}+\gg_{\ou}$) of different fundamental Lie algebras of symmetries which appear in physics. The work of Nahm was important for the construction of various theories of supergravity (see \cite{CJS}).
In this spirit, the classification of superizations of Poincar\'e Lie algebras, in all signatures and dimensions, has been achieved (see \cite{AC, ACDP, DFLV}). 
\\
Lorentzian symmetric spaces $M_{0}=G_{0}/H$ described by Cahen-Wallach (see \cite{CW}) appear in constructions of maximally supersymmetric solutions of $11$-dimensional supergravity (see \cite{FP, FGH}). In this context, some special, physically relevant superization $\gg=\gg_{\0}+\gg_{\ou}$ of the corresponding Cahen-Wallach Lie algebra $\gg_{0}=\gh+\gm$, $\dim\gm=11$, has been considered: the action of the even part $\gg_{\0}$ on the odd part $\gg_{\ou}=S$ is a spin representation, \textit{i.e.} an extension $\rho:\gg_{0}\rightarrow\ggl_{\bR}(S)$ of the spin representation $\rho:\gh\rightarrow\ggl_{\bR}(S)$ of the stability subalgebra $\gh$. The Cahen-Wallach Lie algebra has the form
$$
\gg_{0}=Lie(G_{0})=\gg_{b}=\gh+\gm=E^{*}+(E+\bR\p+\bR\q)
$$
where $E+\bR\p+\bR\q$ is the decomposition of Minkowski space in direct sum of Euclidean space $E$ and $2$-dimensional Minkowski space with isotropic basis $\p$, $\q$. The Lie bracket is determined by a non-degenerate symmetric bilinear form $b$ on the Euclidean space $E$.
\\

This paper studies superizations $\gg=\gg_{\0}+\gg_{\ou}$ of the Cahen-Wallach Lie algebra, where the adjoint action of the even part $\gg_{\0}$ on the odd part $\gg_{\ou}=S$ is given by a spin representation. All dimensions are considered but, due to Bott-periodicity in Clifford theory, the results mainly depend on $\dim\gm=n+1$ mod $8$.
\\
The Cahen-Wallach Lie algebra $\gg_{0}=\gheis+\bR\q$ is a one-dimensional extension of the Heisenberg algebra $\gheis=E^{*}+E+\bR\p$ determined by an outer derivation $\ad_{\q}\in\Der_{\bR}(\gheis)$. As an important intermediate step all spin representations $\rho:\gheis\rightarrow\ggl_{\bR}(S)$ of the Heisenberg algebra are classified. There are two natural classes of such representations, which depend on the image $\rho(\p)$ (called the \textit{charge}) of the central element $\p\in\gheis$: zero charge representations and non-zero charge representations. Their description depends on the solution of some quadratic equation in the even Schur algebra (which is isomorphic to $\bR$, $\bC$ or $\bH$) of the $\Cl(E)$-module $S_{0,n-1}$. Zero charge representations appear in all dimensions and are described in a unified way in Theorem \ref{0c}; they correspond bijectively to suitable pairs (see Definition \ref{suitable}). Zero charge representations are the only spin representations of the Heisenberg algebra when semi-spinors do not exist, as shown in Theorem \ref{al}. In the case when semi-spinors exist, there are non-zero charge spin representations which are described in Theorems \ref{0p}, \ref{3p}, \ref{2p}, \ref{3d} in terms of suitable maps. In the case $\dim E=8$, using representation theory of semisimple Lie algebras, Theorem \ref{aaa} describe all suitable maps and we get parametrization of such representations.
\\
Note that any spin representation of the Heisenberg algebra $\gheis$ defines an odd-commutative superization of $\gheis$, \textit{i.e.} a superization with the the trivial odd bracket $[S,S]=0$. In the case $\dim E=8$, Theorem \ref{aaa} shows that non-zero charge spin representations can be extended only to odd-commutative superizations of the Heisenberg algebra. In the case of zero charge, Proposition \ref{zzz} gives a description of a class of non odd-commutative superizations and Proposition \ref{zzzz} gives explicit examples. Non trivial superizations of the Heisenberg algebra have been recently used in theoretical physics (see \cite{CKN}). 
\\
Section \ref{dan} considers the extension of spin representations of the Heisenberg algebra to the Cahen-Wallach Lie algebra. The problem reduces to determining the image $\rho(\q)$ which satisfies appropriate commutative relations. Theorem \ref{iiii} shows that zero charge spin representations of the Cahen-Wallach Lie algebra are determined by solutions of a fundamental quadratic equation in the Clifford algebra $\Cl(E)$. The problem of extending non-zero charge spin representations of the Heisenberg algebra is more complicated and depends on $n+1$ mod $8$. We specialize to the case $\dim E=8$ and prove that non-zero charge spin representations exist if and only if the bilinear form $b$ is proportional to Euclidean metric (see Theorem \ref{check}). An example of such representation is given and it is checked that the obtained formula gives a representation in any dimension when semi-spinors exist. 
\\
Section \ref{dan2} considers superizations of the Cahen-Wallach Lie algebra. Theorem \ref{zzrr} gives a characterization of all zero charge superizations with translational supersymmetry. It would be interesting to construct examples of non odd-commutative superizations with non-zero charge.
\\

There is a geometric interpretation of the procedure of superization in the framework of supergeometry. The Spin bundle $S(M_{0})$ of a Lorentzian spin manifold $M_{0}$ defines a supermanifold $M=(M_{0},\mathcal{A}_{M})$ whose sheaf of superfunctions $\mathcal{A}_{M}$ is isomorphic to the sheaf of sections of the exterior algebra $\Lambda(S^{*}(M_{0}))$ of $S^{*}(M_{0})$. Supermanifolds of this type are studied in \cite{ACDS, Kl1, Kl2}. Moreover if $M_{0}=G_{0}/H$ is a homogeneous Lorentzian manifold and $S(M_{0})$ is the spin bundle defined by the spin representation $\rho:H\rightarrow\GL_{\bR}(S)$ of the stabilizer $H$ then a superization $\gg=\gg_{\0}+\gg_{\ou}=\gg_{0}+S$ of the Lie algebra $\gg_{0}=Lie(G_{0})=\gh+\gm$ defines a structure of homogeneous supermanifold $G/H$ on $M$ (see \cite{S}). Here $G$ is the Lie supergroup associated with the Harish-Chandra pair $(G_{0},\gg)$, via the Kostant construction (see \cite{Kt, Kz}).
\section{Preliminaries} \label{Cahen-Wallach}
\setcounter{equation}{0}
\bigskip
\par
\subsection{Cahen-Wallach algebra}\hspace{0 cm}\newline\\
Let $(\gm,\left\langle\cdot ,\cdot\right\rangle)$ be a $(n+1)$-dimensional Minkowski space ($n\geq 2$), \textit{i.e.} a real vector space endowed with an inner product $\left\langle\cdot ,\cdot\right\rangle $ of signature $(1,n)=(+,-)$ and fix a Witt decomposition
\bigskip
$$\gm=E\oplus\bR\mathrm{p} \oplus\bR\mathrm{q}$$
\\
with negatively defined scalar product $\left\langle\cdot ,\cdot\right\rangle_{|E}$, $\left\langle \mathrm{p},E\right\rangle=\left\langle \mathrm{q},E\right\rangle=\left\langle \mathrm{p},\mathrm{p}\right\rangle=\left\langle \mathrm{q},\mathrm{q}\right\rangle=0$ and $\left\langle \mathrm{p},\mathrm{q}\right\rangle=1$. Let $E^{*}$ be the dual space of $E$. Denote the musical isomorphisms by
$$
\flat:E\longrightarrow E^{*}\qquad\qquad,\qquad\qquad\sharp:E^{*}\rightarrow E
$$
$$
\phantom{cccc}e\longrightarrow e^{\flat}:=\left\langle e,\cdot\right\rangle_{|E}\qquad\phantom{cccccccccc} e^{\flat}\rightarrow e\quad.
$$
\begin{definition}
\cite{CW}
\rm{Let $b$ be a non-degenerate symmetric bilinear form on the Euclidean subspace $E$ of the Minkowski space $\gm$. We call {\bf CW (Lie) algebra} associated with $b$ the Lie algebra with symmetric decomposition 
\be
\label{marino}
\gg_{b} = \gh + \gm=E^{*}+\gm=E^{*}+(E+\bR\mathrm{p}+\bR\mathrm{q})
\ee
where the only non trivial Lie brackets are 
$$ 
[\mathrm{q},e]=e^{\flat},
$$
$$
\phantom{ccccccc}[e^{\flat},f]=b(e,f)\mathrm{p},
$$
$$
\phantom{cccc}[e^{\flat},\mathrm{q}]=B(e)
$$
where $e,f\in E$ and $B\in\End_{\bR}(E)$ is defined by $b(\cdot,\cdot):=-\left\langle B\cdot,\cdot\right\rangle_{|E}$. 
}\end{definition}
Denote by $\gheis_{b}:=E^{*}+E+\bR\mathrm{p}$ the ideal which is isomorphic to the Heisenberg algebra $\gheis_{b}$, where the "Planck constants" are given by the eigenvalues of $B$. Therefore the CW algebra $\gg_{b}$ can be thought as an extension of $\gheis_{b}$ by means of the outer derivation $\ad_{\mathrm{q}}\in\Der_{\bR}(\gheis_{b})$. Note that
\begin{itemize}
\item[$\cdot$]
$\mathrm{Z}(\gg_{b})=\bR\mathrm{p}=(\gg_{b})^{''}$,
\item[$\cdot$]
$E^{*}$ and $E$ are Abelian subalgebras,
\item[$\cdot$]
$[\gm,\gm]=\gh$,
\item[$\cdot$]
$\gh$ contains no non trivial ideal of $\gg_{b}\phantom{c}.$ 
\end{itemize} 
The first property shows that $\gg_{b}$ is a solvable Lie algebra and the non-degeneracy of $b$ implies that $\gg_{b}$ is indecomposable (for the definition, see \cite{CW}). 
\begin{lemma}
\cite{CW}
\label{CWiso}
Two CW algebras $\gg_{b}$, $\gg_{b^{'}}$ are isomorphic if and only if there exist an orthogonal map $L\in\mathrm{O}(E)$ and a real $r\neq 0$ such that $b^{'}=r^{2}L^{*}b$.
\end{lemma}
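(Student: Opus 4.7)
\emph{Plan.} I would prove the two implications separately: the ``if'' direction by an explicit construction of the isomorphism, and the ``only if'' direction by extracting the conjugacy class of $\ad_{\mathrm{q}}$ on the canonical quotient $\gheis_{b}/\bR\mathrm{p}$ and using the spectral theorem.

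\textbf{The ``if'' direction.} Assume $b^{'} = r^{2} L^{*} b$ with $L \in \mathrm{O}(E)$ and $r \neq 0$. The symmetry of $b$ together with $b = -\langle B\cdot,\cdot\rangle_{|E}$ forces $B$ to be self-adjoint for $\langle\cdot,\cdot\rangle_{|E}$, and since $L$ is orthogonal the hypothesis is equivalent to the operator identity $B^{'} = r^{2} L^{-1} B L$. I would define $\phi: \gg_{b} \to \gg_{b^{'}}$ by
$$
\phi(\mathrm{q}) = r^{-1}\mathrm{q}^{'},\quad \phi(\mathrm{p}) = r\,\mathrm{p}^{'},\quad \phi(e) = L^{-1}(e),\quad \phi(e^{\flat}) = r^{-1}\bigl(L^{-1}(e)\bigr)^{\flat},
$$
and verify the three non-trivial bracket relations; each check collapses at once to $B^{'} = r^{2} L^{-1} B L$ together with the compatibility $b^{'}(L^{-1} e, L^{-1} f) = r^{2} b(e, f)$.

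\textbf{The ``only if'' direction.} Conversely, let $\phi: \gg_{b} \to \gg_{b^{'}}$ be a Lie algebra isomorphism. The canonical filtration $\mathrm{Z}(\gg_{b}) = \bR\mathrm{p} \subset [\gg_{b},\gg_{b}] = \gheis_{b} \subset \gg_{b}$ is $\phi$-invariant, so $\phi(\mathrm{p}) = \gamma \mathrm{p}^{'}$ for some $\gamma \neq 0$, $\phi(\gheis_{b}) = \gheis_{b^{'}}$, and $\phi(\mathrm{q}) = a\mathrm{q}^{'} + h$ with $a \neq 0$ and $h \in \gheis_{b^{'}}$. Since $\gheis_{b^{'}}$ is two-step nilpotent, $\ad_{h}$ vanishes on $V^{'} := \gheis_{b^{'}}/\bR\mathrm{p}^{'}$, so the induced map $\bar\phi : V := \gheis_{b}/\bR\mathrm{p} \to V^{'}$ satisfies $\bar\phi \circ \ad_{\mathrm{q}} = a\,\ad_{\mathrm{q}^{'}} \circ \bar\phi$. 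A direct computation with the defining brackets shows that $\ad_{\mathrm{q}}^{2}$ preserves the splitting $V = E \oplus E^{*}$ and has the spectrum of $-B$ on each summand (and similarly for $\ad_{\mathrm{q}^{'}}^{2}$). Since conjugation preserves the multiset of eigenvalues, one concludes $\mathrm{spec}(B) = a^{2}\,\mathrm{spec}(B^{'})$ counted with multiplicities.

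Setting $r := 1/a$, the operators $B$ and $r^{-2} B^{'}$ are both self-adjoint for $\langle\cdot,\cdot\rangle_{|E}$ with identical spectra, so the spectral theorem for symmetric operators on a Euclidean space produces $L \in \mathrm{O}(E)$ with $L^{-1} B L = r^{-2} B^{'}$, equivalently $b^{'}(e,f) = r^{2} b(Le, Lf)$, i.e.\ $b^{'} = r^{2} L^{*} b$. The main obstacle lies in the spectral step: the Lagrangian decomposition $V = E \oplus E^{*}$ is not in general $\bar\phi$-invariant, so one cannot read $L$ directly off $\bar\phi$; the trick is that $\ad_{\mathrm{q}}^{2}$, which is only defined up to the overall scalar $a^{2}$ coming from the indeterminacy in the lift of $\phi(\mathrm{q})$, nevertheless carries enough information to pin down the $\langle\cdot,\cdot\rangle_{|E}$-conjugacy class of $B$ up to positive scaling, after which the spectral theorem closes the argument.
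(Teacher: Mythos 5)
Your ``if'' direction reproduces exactly the map the paper records after the lemma ($\mathrm{p}\mapsto r\mathrm{p}$, $\mathrm{q}\mapsto r^{-1}\mathrm{q}$, $e\mapsto L^{-1}e$, $e^{\flat}\mapsto r^{-1}(L^{-1}e)^{\flat}$), so there is nothing to compare there. The paper does not prove the ``only if'' direction at all --- it cites Cahen--Wallach --- so your second half is a genuine addition, and I checked it carefully: it is correct. The key steps hold up. Any isomorphism $\phi$ respects the canonical chain $\mathrm{Z}(\gg_{b})=\bR\mathrm{p}\subset[\gg_{b},\gg_{b}]=\gheis_{b}\subset\gg_{b}$ (the paper itself records $\mathrm{Z}(\gg_{b})=\bR\mathrm{p}$ and $[\gm,\gm]=\gh$, from which $[\gg_{b},\gg_{b}]=\gheis_{b}$ follows using non-degeneracy of $b$), so $\phi(\mathrm{p})=\gamma\mathrm{p}'$ and $\phi(\mathrm{q})=a\mathrm{q}'+h$ with $h\in\gheis_{b'}$; two-step nilpotence makes $\ad_{h}$ die on $V'=\gheis_{b'}/\bR\mathrm{p}'$, giving $\bar\phi\circ\overline{\ad_{\mathrm{q}}}=a\,\overline{\ad_{\mathrm{q}'}}\circ\bar\phi$. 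The bracket rules $[\mathrm{q},e]=e^{\flat}$ and $[\mathrm{q},e^{\flat}]=-Be$ give $\ad_{\mathrm{q}}^{2}|_{E}=-B$ and $\ad_{\mathrm{q}}^{2}|_{E^{*}}=-B^{\flat}$, so $\overline{\ad_{\mathrm{q}}}^{2}$ has the spectrum of $-B$ with doubled multiplicity; conjugacy to $a^{2}\overline{\ad_{\mathrm{q}'}}^{2}$ then gives $\mathrm{spec}(B)=a^{2}\,\mathrm{spec}(B')$, and with $r=1/a$ the spectral theorem for $\langle\cdot,\cdot\rangle_{|E}$-symmetric operators produces $L\in\mathrm{O}(E)$ with $B'=r^{2}L^{-1}BL$, i.e.\ $b'=r^{2}L^{*}b$. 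Your caveat that the Lagrangian splitting $E\oplus E^{*}$ of $V$ need not be $\bar\phi$-invariant is well placed --- it is precisely why one passes to the conjugation-invariant data of $\ad_{\mathrm{q}}^{2}$ rather than trying to extract $L$ directly from $\bar\phi$. What your route buys that the paper does not offer is a self-contained, elementary proof of the converse that stays entirely inside the algebraic framework of the paper, with the spectral theorem as the only external input.
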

The isomorphism is explicitly given by the following map
$$
\varphi:\gg_{b}\longrightarrow\gg_{b^{'}}
$$
$$
\phantom{ccc}\mathrm{p}\longrightarrow r\mathrm{p}
$$
$$
\phantom{cccc}\mathrm{q}\rightarrow r^{-1}\mathrm{q}
$$
$$
\phantom{cccc}e\rightarrow L^{-1}e
$$
$$
\phantom{ccccccccccccccccc}e^{\flat}\rightarrow r^{-1}(L^{-1}e)^{\flat}\phantom{cccccc}.
$$
We fix an orthonormal $B$-eigenbasis $\left\{e_{1},...,e_{n-1}\right\}$ of $E$ such that 
\be
\label{prima}
B=\diag(b_{i}) \phantom{ccc},\phantom{ccc}  b_{i}\leq b_{j} \phantom{ccc},\phantom{ccc}\ \sum b_{i}^{2}=n-1
\ee
where $b_{i}\neq 0$ are real numbers for $1\leq i\leq j\leq n-1$. The last assumption in (\ref{prima}) is not restrictive due to Lemma \ref{CWiso}. We then identify the Euclidean space $E$ with $\bR^{0,n-1}$ where $\bR^{r,s}$ is the pseudo-Euclidean vector space  of signature $(r,s)=(+,-)$. For every CW algebra (\ref{marino}), denote by
$$
\mathfrak{i_{b}}:E^{*}\hookrightarrow\so(\gm)
$$
\be
\label{ortogonale}
\phantom{cccc}e^{\flat}\longrightarrow \mathrm{p}\wedge Be
\ee
the isotropy representation of $E^{*}$, given by the adjoint action of $E^{*}$ on $\gm$ in $\gg_{b}$ where
$\mathrm{p}\wedge e:=\left\langle \mrp,\cdot\right\rangle e-\left\langle e,\cdot\right\rangle\mrp \in\so(\gm)$.
Note that $\p\wedge E$ is an abelian subalgebra of the full orthogonal Lie algebra of $\gm$ given by $\so(\gm)=\p\wedge E+\q\wedge E+\bR\p\wedge\q+\so(E)$.
\subsection{Clifford theory}\hfill\newline\\
Denote by 
$$
\Cl_{r,s}=\Cl(\bR^{r,s}):=\T(\bR^{r,s})/\left\langle v\otimes v+\left\langle v,v\right\rangle \right\rangle
$$
the Clifford algebra of the space $\bR^{r,s}$, \textit{i.e.} the quotient of the tensor algebra $\T(\bR^{r,s})$
by the ideal generated by all elements of the form $v\otimes v+\left\langle v,v\right\rangle $ ($v\in\bR^{r,s}$). It is a $\bZ_{2}$-graded associative real algebra $\Cl_{r,s}=\Cl_{r,s}^{\circ}+\Cl_{r,s}^{1}$ and we denote by $\alpha$ its parity automorphism 
$$
\alpha|_{\Cl_{r,s}^{\epsilon}}=(-1)^{\epsilon}\Id\qquad\qquad \epsilon\in\left\{0,1\right\}\qquad\qquad\qquad\qquad\quad.
$$
There exist natural embeddings $\bR^{r,s}\subseteq\Cl_{r,s}$ and
$$
\so(\bR^{r,s})=\so(r,s)\hookrightarrow\Cl_{r,s}\qquad,\qquad v\wedge w\mapsto\frac{1}{4}[v,w]\qquad.
$$
Denote by
\be
\label{ir}
\rho_{r,s}:\Cl_{r,s}\rightarrow\End_{\bR}(S_{r,s})
\ee
the real spin representation, \textit{i.e.} $S_{r,s}$ is a real irreducible $\Cl_{r,s}$-module.
It is known (see \cite{LM}) that every Clifford algebra admits at most two inequivalent irreducible real representations. These representations are equivalent when restricted to $\so(r,s)$. If the $\so(r,s)$-module $S_{r,s}$ is reducible, it decomposes into a direct sum
\be
\label{semi}
S_{r,s}=S_{r,s}^{-}+S_{r,s}^{+}
\ee 
of irreducible $\so(r,s)$-modules which are called semi-spinors. As for the notation, for every $c\in\Cl_{r,s}$, the Clifford action $\rho_{r,s}(c)$ is denoted by $c\cdot$ or, sometimes, by $c$, where the dot is omitted when the action is clear from the context.
\begin{definition}\cite{AC}
\label{tsi}\rm{
A $\so(r,s)$-invariant bilinear form $\Gamma\in\Bil_{\bR}(S_{r,s})^{\so(r,s)}$ on the spin module $S_{r,s}$ is called {\bf admissible} if it has the following properties:
\begin{itemize}
\item[1)] Clifford multiplication $v\cdot$ is either $\Gamma$-symmetric ($\tau=+1$) or \\$\Gamma$-skewsymmetric ($\tau=-1$),
\item[2)] $\Gamma$ is symmetric ($\sigma=+1$) or skew-symmetric ($\sigma=-1$), 
\item[3)] If the $\so(r,s)$-module $S_{r,s}$ is reducible,
then $S_{r,s}^{\pm}$ are either mutually orthogonal ($\i=+1$) or isotropic ($\i=+1$).
\end{itemize}
The three invariants $\tau,\sigma,\i\in\left\{+1,-1\right\}$ defined above are called type, symmetry and isotropy of the admissible bilinear form $\Gamma$. We denote by $\Bil_{\bR}(S)^{\tau\sigma=-1}$ the space of admissible bilinear forms with $\tau\sigma=-1$.
}\end{definition} 
\cite{AC} proves that it is possible to canonically choose an admissible $h\in\Bil_{\bR}(S_{r,s})^{\so(r,s)}$ and that $\Bil_{\bR}(S_{r,s})^{\so(r,s)}$ has a basis of admissible bilinear forms.
\begin{definition}\cite{AC}\rm{
A $\so(r,s)$-invariant endomorphism $C\in\End_{\bR}(S_{r,s})^{\so(r,s)}$ of the spin module $S_{r,s}$ is called {\bf admissible} if it has the following properties:
\begin{itemize}
\item[1)] Clifford multiplication $v\cdot$ either commutes ($\tau=+1$) or \\anticommutes ($\tau=+1$) with $C$,
\item[2)] $C$ is $h$-symmetric ($\sigma=+1$) or $h$-skew-symmetric ($\sigma=+1$),
\item[3)] If the $\so(r,s)$-module $S_{r,s}$ is reducible,
then either $CS^{\pm}\subseteq S^{\pm}$ ($\i=+1$) or $CS^{\pm}\subseteq S^{\mp}$ ($\i=+1$).
\end{itemize}
The three invariants $\tau,\sigma,\i\in\left\{+1,-1\right\}$ defined above are called type, symmetry and isotropy of the admissible endomorphism $C$. 
}\end{definition}
The space $$\cC_{r,s}:=\End_{\bR}(S_{r,s})^{\so(r,s)}$$
is called {\bf Schur algebra} and has a basis of admissible morphisms (see \cite{AC}). The parity automorphism
$$
\overline{\phantom{c}}:\cC_{r,s}\rightarrow\cC_{r,s}
$$
\be
\label{conf}
\qquad\qquad\qquad\pc C\rightarrow \overline{C}:=\tau(C)\cdot C
\ee
is well-defined. Its $+1$-eigenspace, \textit{i.e.} the linear subspace of endomorphisms with invariant $\tau$ equal to $+1$, is denoted by
$$
\cC_{r,s}^{\circ}:=\left\{C\in C_{r,s}|\tau(C)=+1\right\}
$$
and called the {\bf even Schur algebra}. It coincides with the algebra of endomorphisms compatible with the irreducible representation (\ref{ir}). It follows that $\cC_{r,s}^{\circ}$ is a real division algebra, isomorphic then to $\bR$, $\bC$ or $\bH$. The following lemma is (implicitly) used quite frequently.
\begin{lemma}\cite{LM}
\label{VOLUME}
The volume form $\omega_{0,m}=e_{1}\cdot\cdot\cdot e_{m}\in\Cl_{0,m}$ belongs to the center, \text{i.e} $\tau(\omega_{0,m})=1$  (resp. twisted center, \textit{i.e} $\tau(\omega_{0,m})=-1$) of the Clifford algebra $\Cl_{0,m}$ if $m$ is odd (resp. even). Then, in both cases, it commutes with the even part $\Cl_{0,m}^{\circ}$ of $\Cl_{0,m}$. Moreover it satisfies
$$
\omega_{0,m}^{2}=
\left\{\begin{matrix} (-1)^{m\phantom{+1}}\quad\rm{if}\qquad m\equiv 3,4\quad (mod\phantom{c}4)\\
(-1)^{m+1}\quad\rm{if}\qquad m\equiv 1,2\quad(mod\phantom{c}4)
\end{matrix}\right.
$$  
\end{lemma}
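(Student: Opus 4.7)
The plan is to reduce both assertions to elementary anticommutation bookkeeping in $\Cl_{0,m}$. Under the paper's conventions, $\bR^{0,m}$ has signature $(-,\ldots,-)$ and the Clifford relation $v^{2}=-\langle v,v\rangle$ gives $e_{i}^{2}=+1$ for an orthonormal basis, together with $e_{i}e_{j}=-e_{j}e_{i}$ for $i\neq j$. Once this is in hand, everything is a sign count.

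For the commutation/twisted-centrality claim, I would compute $\omega_{0,m}\cdot e_{i}$ and $e_{i}\cdot\omega_{0,m}$ separately. Moving the trailing $e_{i}$ in $e_{1}\cdots e_{m}\cdot e_{i}$ leftwards past $e_{m},e_{m-1},\ldots,e_{i+1}$ costs $(-1)^{m-i}$ and then $e_{i}\cdot e_{i}=1$ removes the doubled letter, leaving the hat-product $e_{1}\cdots\widehat{e_{i}}\cdots e_{m}$. The dual calculation for $e_{i}\cdot e_{1}\cdots e_{m}$ produces the same reduced word with sign $(-1)^{i-1}$. Comparing gives
\[
\omega_{0,m}\cdot e_{i}=(-1)^{m-1}\,e_{i}\cdot\omega_{0,m},
\]
so $\tau(\omega_{0,m})=(-1)^{m-1}$. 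For odd $m$ this means $\omega_{0,m}$ commutes with every generator and hence with all of $\Cl_{0,m}$, while for even $m$ it anticommutes with each generator and therefore commutes precisely with products of an even number of them, i.e.\ with $\Cl_{0,m}^{\circ}$.

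For the square, the cleanest route is a one-step reduction. In $\omega_{0,m}^{2}=(e_{1}\cdots e_{m})(e_{1}\cdots e_{m})$ I would slide the second occurrence of $e_{1}$ leftwards past $e_{m},e_{m-1},\ldots,e_{2}$, picking up a sign $(-1)^{m-1}$, and then use $e_{1}^{2}=1$ to obtain
\[
\omega_{0,m}^{2}=(-1)^{m-1}(e_{2}\cdots e_{m})^{2}.
\]
Iterating this identification (or, equivalently, a one-line induction on $m$) produces $\omega_{0,m}^{2}=(-1)^{(m-1)+(m-2)+\cdots+1}=(-1)^{m(m-1)/2}$. A brief check of the four residues of $m$ modulo $4$ then shows $m(m-1)/2$ is even exactly when $m\equiv 0,1\pmod 4$ and odd exactly when $m\equiv 2,3\pmod 4$, which reorganises to the stated formula: $(-1)^{m+1}$ for $m\equiv 1,2$ and $(-1)^{m}$ for $m\equiv 3,4$.

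There is no serious obstacle; both statements are routine sign computations. The only spot that requires care is the sign convention: the minus in the signature $(0,m)$ combined with $v^{2}=-\langle v,v\rangle$ makes $e_{i}^{2}=+1$ in $\Cl_{0,m}$, and this plus sign is what allows the cancellations above to go through cleanly. With that fixed, no further input is needed.
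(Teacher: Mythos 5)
Your proof is correct, and the sign conventions you flag are exactly the paper's: with $\Cl_{0,m}=\T(\bR^{0,m})/\langle v\otimes v+\langle v,v\rangle\rangle$ and $\langle\cdot,\cdot\rangle$ negative definite on $\bR^{0,m}$, one indeed has $e_i^2=+1$ (the paper itself uses $(Bf)^2=1$ for unit vectors of $E$). The paper gives no proof of this lemma, citing only \cite{LM}; your sign-counting argument, yielding $\omega_{0,m}e_i=(-1)^{m-1}e_i\omega_{0,m}$ and $\omega_{0,m}^2=(-1)^{m(m-1)/2}$, is the standard computation and reorganises correctly into the stated case distinction mod $4$, so nothing further is needed.
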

\subsection{Extending the Cahen-Wallach algebra}\hfill\newline\\
In the following definition $\gg_{0}$ is a Lie algebra which can be either the Heisenberg algebra
$$\gheis_{b}=E^{*}+E+\bR\mathrm{p}$$
or the CW algebra
$$\gg_{b}=\gh+\gm=E^{*}+(E+\bR\p+\bR\q)$$
and 
\be
\label{rapprspin}
\rho_{\spin}:\Cl(\gm)\longrightarrow\End_{\bR}(S)
\ee
is the real spin representation $S=S_{1,n}$ of the Clifford algebra $\Cl(\gm)=\Cl_{1,n}$ of the Minkowski space $\gm$.
\begin{definition}
\label{SE}
\rm{
A representation $\rho:\gg_{0}\rightarrow\ggl_{\bR}(S)$ is called {\bf spin} if 
\begin{itemize}
\item[$i)$]
$\rho_{|E^{*}}=\rho_{\spin}\circ\mathfrak{i_{b}}$
\end{itemize}
where $\i_{b}$ is given by (\ref{ortogonale}). A spin representation $\rho:\gg_{0}\rightarrow\ggl_{\bR}(S)$ has {\bf zero charge} if $\rho(\p)=0$. A Lie superalgebra
($\gg=\gg_{\0}+\gg_{\ou},[\cdot,\cdot])$ is called a {\bf superization} of $\gg_{0}$ if 
\begin{itemize}
\item[$i)$]
$\gg_{\0}=\gg_{0}$,
\item[$ii)$]
$\gg_{\ou}=S$,
\item[$iii)$] The adjoint action of $\gg_{\0}$ on $\gg_{\ou}$ is a spin representation.
\end{itemize} 
A superization $\gg=\gg_{\0}+\gg_{\ou}$ is called {\bf odd-commutative} if $[\gg_{\ou},\gg_{\ou}]=0$. A superization of a CW algebra is said to have {\bf translational supersymmetry} if $\left[S,S\right]\subseteq\gm$. 
}\end{definition}
For technical reasons, in the case of zero charge spin representation of the CW algebra, the condition $\rho(\gm)\subseteq\rho_{\spin}(\Cl(\gm))$ is implicitly assumed. Relation
$$
\rho_{\spin}(\mathrm{p}\wedge e)=\frac{1}{4}\rho_{\spin}([\mathrm{p},e])=\frac{1}{2}\rho_{\spin}(\mathrm{p}e)
$$
implies that
$$
\rho_{\spin}(\mathrm{p}\wedge e)\circ\rho_{\spin}(\mathrm{p}\wedge f)=\frac{1}{4}\rho_{\spin}(\mathrm{p}e\mathrm{p}f)=-\frac{1}{4}\rho_{\spin}(\mathrm{p}\mathrm{p}ef)=0\quad.
$$
In particular every spin representation restricted to $E^{*}$ is two-step nilpotent.
\subsection{Spin representation in Lorentzian signature} \label{SpinLor}
\subsubsection{Model of $\Cl(\gm)$-module}\hspace{0 cm}\newline\\
The irreducible $\Cl_{1,n}$-module (\ref{rapprspin}) can be described in terms of the irreducible $\Cl_{0,n-1}$-module $S_{0,n-1}$. The fixed Witt decomposition
$$
\gm=\bR^{1,n}=\bR^{1,1}\oplus\bR^{0,n-1}=(\bR\p+\bR\q)+E
$$
induces a $\bZ_{2}$-graded isomorphism of $\bZ_{2}$-graded algebras (see \cite{AC})
$$
\Cl_{1,n}\cong\Cl_{1,1}\hat{\otimes}\Cl_{0,n-1}\cong\bR(2)\hat{\otimes}\Cl_{0,n-1}
$$
where $\bR(2)$ is the algebra of real $2\times 2$ matrices and $\hat{\otimes}$ stands for the graded tensor product of $\bZ_{2}$-graded algebras. This isomorphism is defined on generators as
$$\Cl_{1,n}\supseteq\bR^{1,n}=\bR^{1,1}\oplus\bR^{0,n-1}\ni v_{1,1}+v_{0,n-1}\longrightarrow v_{1,1}\otimes 1 + 1\otimes v_{0,n-1}\in\Cl_{1,1}\hat{\otimes}\Cl_{0,n-1}. 
$$
The decomposition of the irreducible $\Cl_{1,1}$-module $S_{1,1}=\bR^{2}=S_{1,1}^{-}+S_{1,1}^{+}$ into semi-spinors induces a decomposition of $S=S_{1,n}$ given by
\be
\label{dec}
S= S_{1,1}\hat{\otimes}S_{0,n-1}=S_{1,1}^{-}\otimes S_{0,n-1}+ S_{1,1}^{+}\otimes S_{0,n-1}=:S_{-}+S_{+}
\ee
where $S_{\mp}$ is linearly isomorphic to $S_{0,n-1}$.
The last equalities of (\ref{dec}), in contrast with the first one, are of vector spaces and {\bf not} of $\Cl_{1,n}$-modules, more precisely, (\ref{dec}) is {\bf not} the decomposition of $S$ into semi-spinors. To indicate this, we use low indices. Write an element $Q=Q_{-}+Q_{+}$ of (\ref{dec}) as the column  
$$ 
Q=\left(
\begin{array}{c}
Q_{-} \\
Q_{+} \\
\end{array}
\right)
$$
and use matrix notation for endomorphisms. If we decompose $\bR^{1,1}=\bR\mathrm{p}\oplus\bR\mathrm{q}=N\oplus N^{*}$ with
$\mathrm{q}(\mathrm{p}):=2\left\langle \mathrm{q},\mathrm{p}\right\rangle=2$, then the spin module $S_{1,1}$ is identified with the exterior algebra 
$$S_{1,1}=\Lambda N=\bR\mathrm{p}\oplus\bR 1=S_{1,1}^{-}+S_{1,1}^{+}$$
and the action of the Clifford algebra $\Cl_{1,1}\cong\bR(2)$ is given in terms of exterior multiplication $\varepsilon(\p)$ and interior multiplication $\i(\q)$ as follows
$$\rho_{1,1}:\Cl_{1,1}\rightarrow \End_{\bR}(S_{1,1})\pc\pc\pc\pc,\pc\pc\pc\pc\mathrm{p}\mapsto \varepsilon(\p)\pc\pc\pc\pc,\pc\pc\pc\pc\mathrm{q}\mapsto -\i(\q)\quad.$$
\subsubsection{Restriction of the spin representation $\rho:\Cl(\gm)\rightarrow\End_{\bR}(S)$ to the commutative subalgebra $\i_{b}(E^{*})\subseteq\so(\gm)\subseteq \Cl(\gm)$}
\begin{proposition}
\label{pallino}
With respect to decomposition (\ref{dec}), the image under the representation (\ref{rapprspin}) of $\mathrm{p}\wedge e\in\mathrm{p}\wedge E\subseteq\so(\gm)$ is given by
$$
\rho_{\spin}(\mathrm{p}\wedge e)=\begin{pmatrix} 0 & \sqrt{2} e \\ 0 & 0 \end{pmatrix}
$$
where $e\in E$ acts on $S_{0,n-1}$ by Clifford multiplication of a vector with a spinor. 
\end{proposition}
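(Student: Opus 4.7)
The plan is to reduce the statement to a direct computation in $\Cl(\gm)$ using the $\bZ_{2}$-graded isomorphism $\Cl_{1,n}\cong\Cl_{1,1}\hat{\otimes}\Cl_{0,n-1}$ recalled just above. First I would invoke the identity
\[
\rho_{\spin}(\mathrm{p}\wedge e)=\tfrac{1}{4}\rho_{\spin}([\mathrm{p},e])=\tfrac{1}{2}\rho_{\spin}(\mathrm{p}e)
\]
already displayed in the text, which is valid because $\mathrm{p}$ and $e$ are mutually orthogonal and hence anticommute in $\Cl(\gm)$. This reduces the problem to evaluating $\rho_{\spin}(\mathrm{p}e)$ on $S=S_{1,1}\hat{\otimes}S_{0,n-1}$.

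Under the graded tensor product isomorphism one has $\mathrm{p}\leftrightarrow\mathrm{p}\otimes 1$ and $e\leftrightarrow 1\otimes e$, and the product in $\Cl_{1,1}\hat{\otimes}\Cl_{0,n-1}$ gives $\mathrm{p}e=\mathrm{p}\otimes e$. I would then apply the Koszul sign rule $(x\otimes y)\cdot(a\otimes b)=(-1)^{|y||a|}(x\cdot a)\otimes(y\cdot b)$ to this element on each summand of $S=S_{-}+S_{+}$. On $S_{-}=\bR\mathrm{p}\otimes S_{0,n-1}$ the action vanishes, because $\mathrm{p}\cdot\mathrm{p}=\varepsilon(\mathrm{p})\mathrm{p}=\mathrm{p}\wedge\mathrm{p}=0$. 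On $S_{+}=\bR\cdot 1\otimes S_{0,n-1}$ the sign is trivial (since $|1|=0$), so
\[
(\mathrm{p}\otimes e)(1\otimes s)=(\mathrm{p}\cdot 1)\otimes(e\cdot s)=\mathrm{p}\otimes(e\cdot s)\in S_{-}.
\]
This already delivers the block shape $\begin{pmatrix}0&\ast\\0&0\end{pmatrix}$ and shows that, up to a numerical factor, the single nonzero block acts on $S_{0,n-1}$ by Clifford multiplication by $e$.

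What remains is to pin down the constant $\sqrt{2}$. This is a pure bookkeeping issue fixed by the linear identifications $S_{\mp}\simeq S_{0,n-1}$ implicit in the chosen model, which in turn are tied to the normalization $\mathrm{q}(\mathrm{p})=2\langle\mathrm{q},\mathrm{p}\rangle=2$ adopted for the exterior-algebra description $S_{1,1}=\Lambda N=\bR\mathrm{p}\oplus\bR\cdot 1$. The main obstacle of the whole argument lies precisely here: simultaneously tracking the scalar factors arising from the Koszul rule, from the $\tfrac{1}{4}[\cdot,\cdot]$ embedding of $\so$ into $\Cl$, and from the basis convention $\{\mathrm{p},1\}$ for $S_{1,1}$, so that they combine into the announced coefficient. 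The structural part of the proof, however, is captured entirely by the two-line graded-tensor computation above.
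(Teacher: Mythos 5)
Your structural reduction is exactly what the paper does: invoke $\rho_{\spin}(\mathrm{p}\wedge e)=\tfrac12\rho_{\spin}(\mathrm{p}e)$, pass to $\Cl_{1,1}\hat\otimes\Cl_{0,n-1}$ via $\mathrm{p}e\mapsto\mathrm{p}\otimes e$, and let the graded product act on $S_{1,1}\hat\otimes S_{0,n-1}$ to get the block shape $\begin{pmatrix}0&\ast\\0&0\end{pmatrix}$ with Clifford multiplication by $e$ in the nonzero block. That part is correct and matches the paper's route.

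The gap is in the last step, which you declared to be "pure bookkeeping" and did not carry out — and the bookkeeping you describe is actually wrong. You say the constant is produced by the normalization $\mathrm{q}(\mathrm{p})=2$ together with "the basis convention $\{\mathrm{p},1\}$ for $S_{1,1}$." But with the basis $\{\mathrm{p},1\}$, your own two-line computation gives $\rho_{\spin}(\tfrac12\mathrm{p}e)\colon 1\otimes s\mapsto \tfrac12\,\mathrm{p}\otimes(e\cdot s)$; identifying $S_{\mp}\cong S_{0,n-1}$ by $\mathrm{p}\otimes s\leftrightarrow s$ and $1\otimes s\leftrightarrow s$ then yields the coefficient $\tfrac12$, not $\sqrt{2}$. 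The $\sqrt{2}$ in the proposition is not forced by the $\mathrm{q}(\mathrm{p})=2$ convention; it comes from the paper's deliberate rescaling of the basis of $S_{1,1}$, namely $\mathrm{p}':=\tfrac{\mathrm{p}}{2\sqrt{2}}$ and $1':=1$, so that $\rho_{1,1}(\mathrm{p})=\begin{pmatrix}0&2\sqrt{2}\\0&0\end{pmatrix}$ and $\rho_{1,1}(\mathrm{q})=\begin{pmatrix}0&0\\-\tfrac{1}{\sqrt{2}}&0\end{pmatrix}$. With that basis, the decomposition (\ref{dec}) reads $S=(\bR\mathrm{p}'\otimes S_{0,n-1})+(\bR 1'\otimes S_{0,n-1})$, and your computation becomes $1'\otimes s\mapsto \tfrac12\,(2\sqrt{2}\,\mathrm{p}')\otimes(e\cdot s)=\sqrt{2}\,\mathrm{p}'\otimes(e\cdot s)$, giving exactly $\begin{pmatrix}0&\sqrt{2}\,e\\0&0\end{pmatrix}$. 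To make the argument complete you must introduce this rescaled basis explicitly; without it your stated conventions produce a different constant.
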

\begin{pf}
With respect to the basis $\left\{\mathrm{p}^{'}:=	\frac{\mathrm{p}}{2\sqrt 2}, 1^{'}:=1\right\}$ of $S_{1,1}$, we have 
$$
\rho_{1,1}(\mathrm{p})=\begin{pmatrix} 0 & 2\sqrt 2 \\ 0 & 0 \end{pmatrix}\qquad,\qquad \rho_{1,1}(\mathrm{q})=\begin{pmatrix} 0 & 0 \\ -\frac{1}{\sqrt 2} & 0 \end{pmatrix}\phantom{c}.
$$
Decomposition (\ref{dec}) is given by
$$
S=(\bR\mathrm{p}^{'}\otimes S_{0,n-1})+ (\bR 1^{'}\otimes S_{0,n-1}):=S_{-}+S_{+}\quad.
$$
Equation
$$\Cl_{1,n}\ni\frac{1}{2}\mathrm{p}e\overset{\approx}{\rightarrow}\frac{1}{2}(\mathrm{p}\otimes 1)\hat{\cdot}(1\otimes e)=\frac{1}{2}(\mathrm{p}\otimes e)\in\Cl_{1,1}\hat{\otimes}\Cl_{0,n-1}$$
implies that $\rho_{\spin}(\frac{1}{2}\mathrm{p}e)$ sends
$$
\mathrm{p}^{'}\otimes s_{0,n-1}	\longrightarrow 0\quad,\quad
1^{'}\otimes s_{0,n-1}\longrightarrow \frac{1}{2}(2\sqrt 2 \mathrm{p}^{'})\otimes e\cdot s_{0,n-1}
$$
for every $s_{0,n-1}\in S_{0,n-1}$.
\end{pf}
The following corollary is a direct consequence of Proposition \ref{pallino} and equation (\ref{ortogonale}).
\begin{corollary}
The image, under a spin representation $\rho:\gheis_{b}\rightarrow\ggl_{\bR}(S)$ of the Heisenberg algebra $\gheis_{b}=E^{*}+E+\bR\p$, of $e^{\flat}\in E^{*}$ is given by 
\be
\label{spinrapr}
\rho(e^{\flat})=\begin{pmatrix} 0 & \sqrt{2} Be \\ 0 & 0 \end{pmatrix}
\ee
where $Be\in E$ acts on $S_{0,n-1}$ by Clifford multiplication of a vector with a spinor.
\end{corollary}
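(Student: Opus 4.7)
My plan is to combine the definition of a spin representation with the explicit formula for the isotropy representation $\mathfrak{i}_b$ and then invoke Proposition \ref{pallino}. By condition $i)$ of Definition \ref{SE}, any spin representation $\rho:\gheis_b\to\ggl_{\bR}(S)$ restricted to $E^*$ factors as $\rho|_{E^*}=\rho_{\spin}\circ\mathfrak{i}_b$. So the task reduces to computing $\rho_{\spin}(\mathfrak{i}_b(e^\flat))$ in the model described in Section \ref{SpinLor}.

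First I would substitute the formula (\ref{ortogonale}) for the isotropy map, which gives $\mathfrak{i}_b(e^\flat)=\p\wedge Be\in\p\wedge E\subseteq\so(\gm)$. This reduces the computation to evaluating $\rho_{\spin}$ on an element of the abelian subalgebra $\p\wedge E$, but with the vector $e\in E$ replaced by $Be\in E$.

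Then I would apply Proposition \ref{pallino} directly to the element $\p\wedge Be$: with respect to the decomposition (\ref{dec}), this yields the matrix with a single nonzero block equal to $\sqrt{2}\,Be$ acting by Clifford multiplication on $S_{0,n-1}$, which is exactly the claimed formula. No further steps are needed; there is no real obstacle, since Proposition \ref{pallino} already carries the entire computational content and equation (\ref{ortogonale}) is a definition. The corollary is essentially a substitution.
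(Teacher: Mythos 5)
Your proposal is correct and follows exactly the paper's route: the paper derives the corollary as a direct consequence of Proposition \ref{pallino} and the formula (\ref{ortogonale}) for $\mathfrak{i}_{b}$, combined with condition $i)$ of Definition \ref{SE}, which is precisely your substitution argument. Nothing is missing.
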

For the sake of completness, recall that
$$
\Cl_{1,1}\hat{\otimes}\Cl_{0,n-1}=\bR\hat{\otimes}\Cl_{0,n-1}+\bR\p\hat{\otimes}\Cl_{0,n-1}+\bR\q\hat{\otimes}\Cl_{0,n-1}+\bR\p\q\hat{\otimes}\Cl_{0,n-1}
$$
and
$$
\rho_{\spin}(1\otimes c)=\begin{pmatrix} \alpha(c) & 0 \\ 0 & c \end{pmatrix}\phantom{cc},\phantom{c}\rho_{\spin}(\p\otimes c)=\begin{pmatrix} 0 & 2\sqrt{2}c \\ 0 & 0 \end{pmatrix}
$$
$$
\rho_{\spin}(\q\otimes c)=\begin{pmatrix} 0 & 0 \\ -\frac{1}{\sqrt{2}}\alpha(c) & 0 \end{pmatrix}\phantom{c},\phantom{c}
\rho_{\spin}(\p\q\otimes c)=\begin{pmatrix} -2\alpha(c) & 0 \\ 0 & 0 \end{pmatrix}
$$
for every $c\in\Cl_{0,n-1}$. 
\subsection{Aim of the paper}\hfill\newline\\
The goal of the paper is to give a description of superizations $\gg=\gg_{\0}+\gg_{\ou}=\gg_{b}+S$ of the CW algebra $\gg_{b}$ with translational supersymmetry. We find general results when the charge is zero and we prove that this is always the case if there are no semi-spinors. When semi-spinors exist, we give examples of odd-commutative superizations with non-zero charge. In particular we obtain a description of
\begin{itemize}
\item[(I)] Spin representations
$
\rho:\gheis_{b}\rightarrow\ggl_{\bR}(S)
$ of the Heisenberg algebra $\gheis_{b}$,
\item[(II)] Zero charge spin representations
$
\rho:\gg_{b}\rightarrow\ggl_{\bR}(S)
$
of the CW algebra $\gg_{b}$,
\item[(III)] Bilinear maps
$$
\Gamma:S\vee S\rightarrow\bR
$$
invariant by zero charge spin representations $\rho:\gg_{b}\rightarrow\ggl_{\bR}(S)$.
\end{itemize} 
The discrepancy between (III) and the hypothesis of translational supersymmetry is only apparent. Indeed denote by
$[\cdot,\cdot]:S\vee S\longrightarrow \gm$
the bilinear map which gives the Lie bracket between two odd elements $Q,\tilde{Q}\in S$ of a superization with translational supersymmetry. By abuse of notation, denote by
$$\p(Q,\tilde{Q})\quad,\quad\q(Q,\tilde{Q})\quad,\quad e_{i}(Q,\tilde{Q})$$ 
the corresponding components of $[Q,\tilde{Q}]\in\gm$, \textit{i.e.}
$$
[Q,\tilde{Q}]:=\p(Q,\tilde{Q})\mathrm{p}+\q(Q,\tilde{Q})\mathrm{q}+\sum_{i} e_{i}(Q,\tilde{Q})e_{i}\quad.
$$
\begin{lemma}
\label{bilinear}
Let $\gg=\gg_{b}+S$ be a superization with translational supersymmetry of a CW algebra $\gg_{b}=\gh+\gm$. Then
$
[Q,\tilde{Q}]=\p(Q,\tilde{Q})\mathrm{p}\in\bR\mathrm{p}
$.
\end{lemma}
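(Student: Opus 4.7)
The plan is to expand $[Q,\tilde Q]\in\gm$ along the Witt decomposition $\gm=\bR\p\oplus\bR\q\oplus E$ and to kill all components except the one along $\p$ by testing the super-Jacobi identity against carefully chosen even elements of $\gg_b$.

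Writing $[Q,\tilde Q]=P\p+R\q+\sum_i V_i e_i$ with real coefficients $P$, $R$, $V_i$, the super-Jacobi identity for an even element $X\in\gg_b$ and two odd elements $Q,\tilde Q\in S$ reads
\begin{equation*}
\ad_X[Q,\tilde Q]=[\rho(X)Q,\tilde Q]+[Q,\rho(X)\tilde Q].
\end{equation*}
By the translational supersymmetry hypothesis, the right-hand side always lies in $\gm$. The decisive observation is that for suitable choices of $X\in\gm$ itself, the CW brackets transport $\ad_X[Q,\tilde Q]$ into the isotropy $\gh=E^*$; since $\gg_b=\gh\oplus\gm$ as vector spaces, both sides of the Jacobi identity must then vanish.

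Concretely, I first take $X=e\in E$. Among $[e,\p]=0$, $[e,\q]=-e^\flat$, $[e,e_i]=0$, only the second is nonzero and it lies in $E^*$; the Jacobi identity therefore forces $-R\,e^\flat$ (an element of $\gh$) to equal something in $\gm$, so $R\,e^\flat=0$ for every $e\in E$, giving $R=0$. Next I take $X=\q$. The brackets $[\q,\p]=0$, $[\q,\q]=0$, $[\q,e_i]=e_i^\flat$ give $\ad_\q[Q,\tilde Q]=\sum_i V_i\,e_i^\flat\in E^*$, which must likewise vanish, forcing $V_i=0$ for all $i$. What remains is $[Q,\tilde Q]=P\p=\p(Q,\tilde Q)\,\p$.

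I do not foresee a serious obstacle: the entire argument rests on the linear-algebra fact $\gh\cap\gm=0$ coming from the symmetric decomposition, on the structural relation $\ad_\gm(\gm)\subseteq\gh$ built into the definition of a CW algebra, and on the translational supersymmetry assumption. In particular, no information about the explicit form of the spin representation $\rho$ on $S$ or of the remaining coefficient $P=\p(Q,\tilde Q)$ is needed.
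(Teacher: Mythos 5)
Your proposal is correct and follows essentially the same strategy as the paper: test the super-Jacobi identity against even elements, use translational supersymmetry to place the $[[\cdot,Q],\tilde Q]+[Q,[\cdot,\tilde Q]]$ side in $\gm$, and then invoke the direct sum $\gg_b=\gh\oplus\gm$. The only (pleasant) variation is in the second test element: the paper uses $\q$ to kill the $E$-components and then $e_i^\flat\in\gh$ to kill the $\q$-component, whereas you use $e\in E$ to kill the $\q$-component and then $\q$ to kill the $E$-components, so both of your test elements lie in $\gm$ and every step uses the same structural fact $[\gm,\gm]\subseteq\gh$ and $\gh\cap\gm=0$ — a slightly more uniform version of the identical argument.
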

\begin{pf}
The equation
$$
\gm\ni[[\mathrm{q},Q],\tilde{Q}]+[Q,[\mathrm{q},\tilde{Q}]]=[\mathrm{q},[Q,\tilde{Q}]]=\sum_{i}e_{i}(Q,\tilde{Q})e_{i}^{\flat}\in\gh
$$
implies that $e_{i}(Q,\tilde{Q})=0$ for $i=1, ...,n-1$ and hence that $[S,S]\subseteq\bR\mathrm{p}+\bR\mathrm{q}$. From this and the equation
$$
\bR\mathrm{p}+\bR\mathrm{q}\ni[[\mathrm{e_{i}^{\flat}},Q],\tilde{Q}]+[s,[\mathrm{e_{i}^{\flat}},\tilde{Q}]]=[\mathrm{e_{i}^{\flat}},[Q,\tilde{Q}]]=\q(Q,\tilde{Q})[\mathrm{e_{i}^{\flat}},\mathrm{q}]\in E
$$
it follows that $\q(Q,\tilde{Q})=0$.
\end{pf}
\section{Spin representation of the Heisenberg algebra}
\setcounter{equation}{0}
This section deals with (I); it describes the general structure of spin representations of the Heisenberg algebra, specializing it to the various dimensions. Results strongly depend on $\dim(E)=n-1\pc(\text{mod}\pc 8)$. This fact relies on the structure of the even Schur algebra which can be $\bR$, $\bC$ or $\bH$. Moreover we show that non-zero charge spin representations exist if and only if semi-spinors exist. By a representation
$
\rho:\gheis_{b}\rightarrow \ggl_{\bR}(S)
$
we will always mean a spin representation, \textit{i.e.} an extension of (\ref{spinrapr}) to the Heisenberg algebra.
\subsection{The image of the spin representation}\hspace{0 cm}\newline\\
This subsection starts the study of all (spin) representations of the Heisenberg algebra. The problem is reduced to a system of quadratic equations on the real division algebra $\cC_{0,n-1}^{\circ}$ (plus one extra-condition), whose solution will be the central point of next subsections.
\begin{lemma}
\label{p}
The images, under a representation $\rho:\gheis_{b}\rightarrow \ggl_{\bR}(S)$, of $\mathrm{p}\in\gheis$ and $e\in E$ are of the following form:
$$
\rho(\mathrm{p})=\begin{pmatrix} 0 & \mathrm{p}_{12} \\ 0 & 0 \end{pmatrix}
\qquad,\qquad
\rho(e)=\begin{pmatrix} \rho_{11}(e) & \rho_{12}(e) \\ 0 & \rho_{22}(e) \end{pmatrix}
$$
where $\mathrm{p}_{12}\in \End_{\bR}(S_{0,n-1})$ and $\rho_{11},\rho_{12},\rho_{22}\in\Hom_{\bR}(E,\End_{\bR}(S_{0,n-1}))$ satisfy the following conditions
\be
\label{gg}
b(f,e)
\begin{pmatrix}
\mathrm{p}_{12}Q_{+}\\
0 \\
\end{pmatrix}
=\sqrt 2
\begin{pmatrix}
-\rho_{11}(e)Bf\cdot Q_{+}+ Bf\cdot\rho_{22}(e)Q_{+}\\
0 \\
\end{pmatrix}
\ee
\be
\label{ggg}
\begin{pmatrix}
\mathrm{p}_{12}\rho_{22}(e)Q_{+}\\
0 \\
\end{pmatrix}
=
\begin{pmatrix}
\rho_{11}(e)\mathrm{p}_{12}Q_{+}\\
0 \\
\end{pmatrix}
\phantom{c}
\ee
\be
\label{abelian}
[\rho(e),\rho(f)]=0
\ee
for every $f\in E$.
\end{lemma}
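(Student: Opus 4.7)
The plan is to write $\rho(\p)$ and $\rho(e)$ (for $e\in E$) as generic $2\times 2$ block matrices with respect to the splitting $S = S_{-} + S_{+}$ of (\ref{dec}), and to extract the stated normal forms and the relations (\ref{gg})--(\ref{abelian}) from the three defining commutation relations of $\gheis_{b}$: the centrality $[\p,e^{\flat}] = [\p,e] = 0$, the mixed bracket $[e^{\flat},f] = b(e,f)\p$, and the commutativity $[e,f] = 0$ on $E$. The only non-formal ingredient is that Clifford multiplication by any nonzero $v\in E\cong\bR^{0,n-1}$ is invertible on $S_{0,n-1}$ (since $v^{2} = |v|^{2} > 0$ in $\Cl_{0,n-1}$) and that $B\in\End_{\bR}(E)$ is invertible; together these let one ``cancel'' $Be\cdot$ from block equations.

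First I would use $[\rho(\p), \rho(e^{\flat})] = 0$, with $\rho(e^{\flat})$ in the explicit form (\ref{spinrapr}). Denoting the $(2,1)$-block of $\rho(\p)$ by $C$, the $(1,1)$-block of this bracket equals $-\sqrt{2}\,Be\cdot C$, which forces $C = 0$; the $(1,2)$-block produces the intertwining $\rho_{11}(\p)\cdot Be = Be\cdot\rho_{22}(\p)$ between the diagonal blocks of $\rho(\p)$. Next, writing $\rho(f) = (\rho_{ij}(f))_{i,j=1,2}$ and expanding $[\rho(e^{\flat}), \rho(f)] = b(e,f)\rho(\p)$, its $(1,1)$- and $(2,2)$-blocks read
\begin{equation*}
\sqrt{2}\,Be\cdot\rho_{21}(f) = b(e,f)\,\rho_{11}(\p), \qquad -\sqrt{2}\,\rho_{21}(f)\cdot Be = b(e,f)\,\rho_{22}(\p).
\end{equation*}
Choosing $e$ in the $b$-orthogonal complement of $f$ (or, in the $\dim E = 1$ case, comparing the two equations at $e=f$ together with the intertwining just derived), invertibility of $Be\cdot$ forces $\rho_{21}(f) = 0$; evaluating then at any $f$ with $b(f,f)\neq 0$ gives $\rho_{11}(\p) = \rho_{22}(\p) = 0$. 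This establishes both block normal forms stated in the lemma.

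Finally, the three relations are read off directly: (\ref{gg}) is the $(1,2)$-block of $[\rho(e^{\flat}), \rho(f)] = b(e,f)\rho(\p)$ after interchanging $e$ and $f$; (\ref{ggg}) is the $(1,2)$-block of $[\rho(\p), \rho(e)] = 0$ specialised to the normal forms above; and (\ref{abelian}) is the direct translation of $[e,f] = 0$ in $\gheis_{b}$. The only mildly delicate point is the block-matrix cancellation in the second paragraph, which uses the Euclidean signature of $E$ in an essential way; everything else is routine block-matrix algebra.
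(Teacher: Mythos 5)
Your proposal is correct and follows essentially the same route as the paper: block-decompose $\rho(\p)$ and $\rho(e)$ with respect to (\ref{dec}), expand the brackets $[\p,f^{\flat}]=0$, $[f^{\flat},e]=b(f,e)\p$, $[\p,e]=0$, $[e,f]=0$, and use invertibility of Euclidean Clifford multiplication to kill the lower-left blocks and the diagonal blocks of $\rho(\p)$. The only (harmless) difference is that you eliminate $\rho_{21}(f)$ by choosing $e$ $b$-orthogonal to $f$, while the paper combines the two diagonal-block equations with the intertwining relation, which is precisely your $\dim E=1$ fallback.
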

\begin{pf}
Denote by
$$\rho(\mathrm{p})=\begin{pmatrix}
\mathrm{p}_{11} & \mathrm{p}_{12} \\ \mathrm{p}_{21} & \mathrm{p}_{22}
\end{pmatrix}
\qquad,\qquad
\rho(e)=\begin{pmatrix}
e_{11} & e_{12} \\ e_{21} & e_{22}
\end{pmatrix}
$$
the images under $\rho$ of $\mathrm{p}$ and $e\in E$; $f$ is an element of $E$.
Equation $[f^{\flat},\mathrm{p}]=0$ implies that
$$0=-[s,[f^{\flat},\mathrm{p}]]=-[\mathrm{p},\begin{pmatrix}
\sqrt 2 Bf\cdot Q_{+} \\ 0 
\end{pmatrix}]
+\begin{pmatrix}
0 & \sqrt 2 Bf \\ 0 & 0
\end{pmatrix}
[\mathrm{p},s]$$
which is equivalent to
$$
\begin{pmatrix}
\mathrm{p}_{11}Bf\cdot Q_{+} \\
\mathrm{p}_{21}Bf\cdot Q_{+} \\
\end{pmatrix}
=
\begin{pmatrix}
Bf\cdot(\mathrm{p}_{21}Q_{-}+\mathrm{p}_{22}Q_{+})\\
0 \\
\end{pmatrix}
$$
for any $f\in E$. It follows that $\mathrm{p}_{21}=0$ and
\be
\label{cav}
\mathrm{p}_{11}\circ f\cdot=f\cdot\circ\pc\mathrm{p}_{22}\qquad,\qquad\mathrm{p}_{22}\circ f\cdot=f\cdot\circ\pc\mathrm{p}_{11}
\ee
for any $f\in E$.
The equation $b(f,e)\mathrm{p}=[f^{\flat},e]$ implies that
$$b(f,e)
\begin{pmatrix}
\mathrm{p}_{11}Q_{-}+\mathrm{p}_{12}Q_{+}\\
\mathrm{p}_{22}Q_{+}\\
\end{pmatrix}
=\sqrt 2
\begin{pmatrix}
-e_{11}Bf\cdot Q_{+}+Bf\cdot e_{21}Q_{-}+ Bf\cdot e_{22}Q_{+}\\
-e_{21}Bf\cdot Q_{+}\\
\end{pmatrix}
$$
for any $e,f\in E$. For every $e,f\in E$ such that $b(f,e)=1$, it follows that $$\mathrm{p}_{11}=\sqrt{2}Bf\cdot\circ\pc e_{21}\qquad,\qquad\mathrm{p}_{22}=-\sqrt{2}e_{21}\circ Bf\cdot\phantom{cccccc}.$$ 
For every $Bf\in E$ such that $(Bf)^{2}=1$, this equation, together with (\ref{cav}), implies that
$
\mathrm{p}_{22}=\sqrt{2}e_{21}\circ Bf\cdot=0$ and $\mathrm{p}_{11}=0$. Therefore $e_{21}=0$ for all $e\in E$. The equations (\ref{ggg}), (\ref{abelian}) follow from $[\mathrm{p},e]=0$ and $[e,f]=0$.
\end{pf}
Equations (\ref{gg}) and (\ref{ggg}) are explicitly solved case by case and their investigation is equivalent to solving some quadratic equations on the real division algebra $\cC_{0,n-1}^{\circ}$. Then it is possible to reduce the extra condition (\ref{abelian}). Equations (\ref{gg}) and (\ref{ggg}) are re-written, in terms of the orthonormal basis of $E$, into a {\bf system of three equations}:
\\
\be
\label{laprima}
\pc\pc\pc 0=e_{i}\cdot\circ\pc\rho_{22}(e_{j})-\rho_{11}(e_{j})\circ e_{i}\cdot\qquad\forall\phantom{c}1\leq i\neq j\leq n-1
\ee
\be
\label{laseconda}
\frac{\mathrm{p}_{12}}{\sqrt2}=e_{j}\cdot\circ\pc\rho_{22}(e_{j})-\rho_{11}(e_{j})\circ e_{j}\cdot\qquad\forall\phantom{c} 1\leq j\leq n-1\quad\pc
\ee
\be
\label{laterza}
\mathrm{p}_{12}\circ\rho_{22}(e_{j})=\rho_{11}(e_{j})\circ\mathrm{p}_{12}\qquad\quad\pc\pc\forall\phantom{c} 1\leq j\leq n-1
\ee
The next subsections are dedicated to finding the solutions of this system, together with the extra-condition (\ref{abelian}). The following notion of suitability is useful to describe them.
\begin{definition}
\label{suitable}
\rm{Let $\rho_{11},\rho_{22}\in\Hom_{\bR}(E,\End_{\bR}(S_{0,n-1}))$ be fixed. A map
$$
\rho_{12}\in\Hom_{\bR}(E,\End_{\bR}(S_{0,n-1}))
$$
is called $(\rho_{11},\rho_{22})$-{\bf suitable} (or, by abuse of notation, suitable) if the bilinear map
$$
E\otimes E\rightarrow \End_{\bR}(S_{0,n-1})
$$
\be
\label{simmetria}
\phantom{ccccccccccccccccccc}e\otimes f\rightarrow\rho_{11}(e)\circ\rho_{12}(f)-\rho_{12}(f)\circ\rho_{22}(e)
\ee
is symmetric. A pair
\be
\label{vectorspace}
(\rho_{11},\rho_{12})\in\Hom_{\bR}(E,\cC_{0,n-1})\bigoplus \Hom_{\bR}(E,\End_{\bR}(S_{0,n-1}))
\ee
is called {\bf suitable} if $[\rho_{11}(E),\rho_{11}(E)]=0$ and it holds (\ref{simmetria}) with $\rho_{22}:=\overline{\rho_{11}}$.
}\end{definition}
The vector space $$\Hom_{\bR}(E,\End_{\bR}(S_{0,n-1}))$$ has a natural structure of $\so(E)$-module. Whenever the two linear maps $\rho_{11},\rho_{22}$ are $\so(E)$-invariant, the subspace of $\Hom_{\bR}(E,\End_{\bR}(S_{0,n-1}))$ which consists of $(\rho_{11},\rho_{22})$-suitable maps is $\so(E)$-stable. In this case, using representation theory of semisimple Lie algebras, it is possible to determine all $(\rho_{11},\rho_{22})$-suitable maps. An example of such description, for the non-zero charge case with $\dim E=n-1=8$, is given in subsection \ref{dieci}. \\
Suitable pairs are more difficult to study. For example $\rho_{11}\in\Hom_{\bR}(E,\cC_{0,n-1})$ is not $\so(E)$-invariant if it is not zero. Moreover, suitable pairs depend on the choice of an Abelian Lie subalgebra of the Schur algebra $\cC_{0,n-1}$. 
\subsection{Zero charge representation}\hspace{0 cm}\newline\\
We show that zero charge spin representations are in bijective correspondence with suitable pairs.
\begin{theorem}
\label{0c}
Every suitable pair (\ref{vectorspace}) defines a zero charge representation $\rho:\gheis_{b}\rightarrow\ggl_{\bR}(S)$ given by
\be
\label{common}
\rho(e^{\flat})=\begin{pmatrix} 0 & \sqrt{2} Be \\ 0 & 0 \end{pmatrix}\quad,\quad
\rho(\mathrm{p})=\begin{pmatrix} 0 & 0 \\ 0 & 0 \end{pmatrix}\quad,\quad
\rho(e)=\begin{pmatrix} \rho_{11}(e) & \rho_{12}(e) \\ 0 & \overline{\rho_{11}}(e) \end{pmatrix}
\ee
Moreover every zero charge representation of $\gheis$ is of this type.
\end{theorem}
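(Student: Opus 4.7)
The plan is to verify the two implications separately, both through explicit $2\times 2$ block matrix calculations in the decomposition (\ref{dec}), using one key identity on the Schur algebra: for $v\in E$ and $C\in\cC_{0,n-1}$ one has $v\cdot\overline{C}=C\cdot v$, which follows in one line from $\overline{C}=\tau(C)C$ by splitting into cases $\tau(C)=\pm 1$.

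\noindent\emph{Forward implication.} Given a suitable pair $(\rho_{11},\rho_{12})$, I set $\rho_{22}:=\overline{\rho_{11}}$ and define $\rho$ by (\ref{common}). The spin condition on $E^{*}$ holds by construction and zero charge is immediate. To check that $\rho$ respects the Heisenberg brackets, note first that $[\mathrm{p},\cdot]=0$ is trivial and $[\rho(e^{\flat}),\rho(f^{\flat})]=0$ because $\rho(E^{*})$ is strictly upper-triangular with vanishing square. For $[e^{\flat},f]=b(e,f)\mathrm{p}$, a matrix multiplication reduces the claim to the vanishing of the upper-right block $\sqrt{2}\,(Be\cdot\overline{\rho_{11}}(f)-\rho_{11}(f)\cdot Be)$, which is precisely the identity $v\cdot\overline{C}=C\cdot v$. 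For $[e,f]=0$, the diagonal blocks give $[\rho_{11}(e),\rho_{11}(f)]$ and $[\overline{\rho_{11}}(e),\overline{\rho_{11}}(f)]$: the first vanishes by the suitability hypothesis, and the second follows from $\overline{C}_{1}\overline{C}_{2}=\tau(C_{1})\tau(C_{2})C_{1}C_{2}$, which preserves commutators in $\cC_{0,n-1}$. The off-diagonal block is the antisymmetrization in $e,f$ of $\rho_{11}(e)\rho_{12}(f)-\rho_{12}(f)\overline{\rho_{11}}(e)$, whose vanishing is exactly the symmetry condition (\ref{simmetria}) with $\rho_{22}=\overline{\rho_{11}}$.

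\noindent\emph{Converse.} Conversely, let $\rho$ be an arbitrary zero-charge spin representation. Lemma \ref{p} already forces the matrix form of $\rho(\mathrm{p})$ and $\rho(e)$; zero charge sets $\mathrm{p}_{12}=0$, and equations (\ref{laprima})--(\ref{laterza}) collapse into the single Clifford-module identity
$$
v\cdot\rho_{22}(f)=\rho_{11}(f)\cdot v\qquad\forall\, v,f\in E.
$$
Specializing to two distinct unit basis vectors $e_{i}\neq e_{j}$ (with $e_{i}^{2}=e_{j}^{2}=1$ in $\Cl_{0,n-1}$) yields $\rho_{22}(f)=e_{i}\rho_{11}(f)e_{i}=e_{j}\rho_{11}(f)e_{j}$, whence $[e_{j}e_{i},\rho_{11}(f)]=0$. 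The products $e_{j}e_{i}$ generate the even Clifford algebra $\Cl_{0,n-1}^{\circ}$, which contains the image of $\so(E)$; hence $\rho_{11}(f)\in\cC_{0,n-1}$. With $\rho_{11}(f)$ in the Schur algebra, $e_{i}\rho_{11}(f)e_{i}=\tau(\rho_{11}(f))\rho_{11}(f)=\overline{\rho_{11}}(f)$ identifies $\rho_{22}$ with $\overline{\rho_{11}}$. Finally, reading (\ref{abelian}) block by block gives $[\rho_{11}(E),\rho_{11}(E)]=0$ from the diagonal and the symmetry (\ref{simmetria}) from the off-diagonal block, certifying $(\rho_{11},\rho_{12})$ as a suitable pair.

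\noindent\emph{Main obstacle.} The only step beyond routine matrix bookkeeping is the rigidity argument in the converse: extracting from the single relation $v\rho_{22}(f)=\rho_{11}(f)v$ both the Schur-invariance of $\rho_{11}$ and the explicit formula $\rho_{22}=\overline{\rho_{11}}$. Once this structural point is secured, everything else reduces to matrix multiplication combined with the basic identity $v\overline{C}=Cv$.
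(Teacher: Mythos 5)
Your proof is correct and follows essentially the same route as the paper's: for the converse you reduce (\ref{laprima})--(\ref{laterza}) under $\mathrm{p}_{12}=0$ to $e_i\cdot\rho_{22}(e_j)=\rho_{11}(e_j)\circ e_i\cdot$, deduce commutation with the generators $e_je_i$ of $\so(E)$ (hence $\rho_{11}(E)\subseteq\cC_{0,n-1}$ and $\rho_{22}=\overline{\rho_{11}}$), and then identify (\ref{abelian}) with suitability, exactly as the paper does. The only difference is expository: you spell out the forward verification (which the paper dismisses as ``easy to see'') through the identity $v\cdot\overline{C}=C\cdot v$ and the block-matrix bookkeeping, which is a useful addition but not a different argument.
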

\begin{proof}
It is easy to see that (\ref{common}) is a representation. Vice versa, since the charge is zero, the three equations (\ref{laprima}), (\ref{laseconda}), (\ref{laterza}) reduce to
$$
e_{i}\cdot\circ\pc\rho_{22}(e_{j})=\rho_{11}(e_{j})\circ e_{i}\cdot
$$
for {\bf any} $1\leq i,j\leq n-1$. Equation
$$
e_{i}e_{k}\cdot\circ\pc\rho_{11}(e_{j})= e_{i}\cdot\circ\pc\rho_{22}(e_{j})\circ e_{k}\cdot=\rho_{11}(e_{j})\circ e_{i}e_{k}\cdot
$$
implies that the linear map $\rho_{11}\in\Hom_{\bR}(E,\End_{\bR}(S_{0,n-1}))$ takes values in the Schur algebra $\cC_{0,n-1}$ and that $\rho_{22}=\overline{\rho_{11}}$. The condition of suitability is then equivalent to the extra condition $[\rho(E),\rho(E)]=0$, if we remark that
$$[\rho_{11}(E),\rho_{11}(E)]=0 \Longleftrightarrow [\overline{\rho_{11}}(E),\overline{\rho_{11}}(E)]=0\quad.$$
The theorem is hence proved.
\end{proof}
The next two subsections contain some special results about spin representations in Euclidean signature which we will use to solve the system of three equations (\ref{laprima}), (\ref{laseconda}), (\ref{laterza}).
\subsection{Spin representation in Euclidean signature}\hspace{0 cm}\newline\\
\label{euclidean}
Let us fix a $1\leq \i\leq n-1$. The isometric embedding
$$
\bR^{0,n-2}\hookrightarrow\bR^{0,n-1}
$$ 
$$
\quad(x_{1},...,x_{\i-1},x_{\i+1},..,x_{n-1})\rightarrow (x_{1},...,x_{\i-1},0,x_{\i+1},..,x_{n-1})
$$
given by $x_{\i}=0$ gives rise to a canonical embedding of Clifford algebras $$\Cl_{0,n-2}\hookrightarrow\Cl_{0,n-1}$$ which is explicitly given by
$$
\left\langle e_{1},..,e_{\i-1},\hat{e_{\i}},e_{\i+1},..,e_{n-1}\right\rangle=\Cl_{0,n-2}\subseteq\Cl_{0,n-1}=\left\langle e_{1},..,e_{\i-1},e_{\i},e_{\i+1},..,e_{n-1}\right\rangle\quad.
$$ 
This subsection studies the spin representation $$\rho_{0,n-1}:\Cl_{0,n-1}\longrightarrow \End_{\bR}(S_{0,n-1})$$ restricted to the even part $\Cl_{0,n-2}^{\circ}$ of $\Cl_{0,n-2}$, \textit{i.e.} it studies the injective map
\be
\label{iniettiva}
\rho_{0,n-1}|_{\Cl^{\circ}_{0,n-2}}:\Cl^{\circ}_{0,n-2}\longrightarrow \End_{\bR} (S_{0,n-1})
\ee
The goal of this subsection is to give a description of the vector space 
$$\End_{\bR}(S_{0,n-1})^{\Cl^{\circ}_{0,n-2}}$$
of linear endomorphisms of $S_{0,n-1}$ invariant by (\ref{iniettiva}). 
The representation (\ref{iniettiva}) is not irreducible but is the sum of two irreducible representations. Indeed consider the following involution of $S_{0,n-1}$
$$
e_{\i}\cdot:S_{0,n-1}\longrightarrow S_{0,n-1}
$$
and the projection operators
$
\pi^{\pm}:=\frac{1}{2}(\Id\pm e_{\i}\cdot)
$
which satisfy
$$
\pi^{+}+\pi^{-}=\Id\qquad,\qquad(\pi^{\pm})^{2}=\pi^{\pm}\qquad,\qquad\pi^{+}\pi^{-}=\pi^{-}\pi^{+}=0\phantom{c}.
$$
The $\pm 1$-eigenspaces $\pi^{\pm}S_{0,n-1}$ of $e_{\i}\cdot$ are obviously
$\Cl^{\circ}_{0,n-2}$-invariant and, for dimensional reasons, they are irreducible. The $\Cl^{\circ}_{0,n-2}$-module (\ref{iniettiva}) decomposes as direct sum
\be
\label{yy}
S_{0,n-1}=\pi^{+}S_{0,n-1}\oplus\pi^{-}S_{0,n-1}
\ee
of $\Cl^{\circ}_{0,n-2}$-irreducible modules. We prove the following
\begin{theorem}
\label{ttt}
Let $\varphi\in\End_{\bR}(S_{0,n-1})^{\Cl^{\circ}_{0,n-2}}$ be a ${\Cl^{\circ}_{0,n-2}}$-invariant endomorphism of $S_{0,n-1}$. Then, with respect to decomposition (\ref{yy}), 
$$
\varphi=\begin{pmatrix} a & b\phi \\ c\phi & d \end{pmatrix}
$$
where $a,b,c,d\in\cC^{\circ}_{0,n-1}$ are uniquely determined and
$
\phi:S_{0,n-1}^{\pm}\rightarrow S_{0,n-1}^{\mp}
$
is given by 
$$
\phi=
\left\{\begin{matrix}
\rho_{0,n-1}(\omega_{0,n-2})\qquad\rm{if}\qquad n-1\equiv 0\pc (mod\pc 2)\\
\phantom{cc}J\quad\quad\qquad\qquad\phantom{cccc}\rm{if}\qquad n-1\equiv 3,7\pc (mod\pc 8)\\
\phantom{cc}0\quad\quad\qquad\qquad\quad\phantom{cc}\rm{if}\qquad n-1\equiv 1,5 \pc (mod\pc 8) 
\end{matrix}\right.
$$
where $\omega_{0,n-2}\in\Cl_{0,n-2}$ is the volume form of $\Cl_{0,n-2}\subseteq\Cl_{0,n-1}$ and $J\in\cC_{0,n-1}$ is a complex structure if $n-1\equiv 3\pc (mod \pc 8)$ or a para-complex structure if $n-1\equiv 7\pc (mod\pc 8)$.
\end{theorem}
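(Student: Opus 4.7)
The plan is to apply Schur's lemma to the block decomposition of $\varphi$ induced by $S_{0,n-1} = S^+ \oplus S^-$, with $S^{\pm} = \pi^{\pm} S_{0,n-1}$, and then identify the relevant intertwiner spaces with $\cC_{0,n-1}^{\circ}$. The even subalgebra $\Cl_{0,n-2}^{\circ}$ is generated by products $e_j e_k$ with $j, k \neq \i$, each of which commutes with $e_{\i}$; hence $\Cl_{0,n-2}^{\circ}$ commutes with $e_{\i}$ and preserves $S^{\pm}$. Writing $\varphi = \bigl(\begin{smallmatrix} A & B \\ C & D\end{smallmatrix}\bigr)$ block-wise, invariance of $\varphi$ forces each block to be a $\Cl_{0,n-2}^{\circ}$-intertwiner between the irreducible modules $S^{\pm}$, so by Schur the diagonal blocks lie in the division algebra $\End_{\Cl_{0,n-2}^{\circ}}(S^{\pm})$ and the off-diagonal ones either vanish or form a rank-one free module over it.

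Next I would identify $\End_{\Cl_{0,n-2}^{\circ}}(S^{\pm}) \cong \cC_{0,n-1}^{\circ}$. Any $\psi \in \cC_{0,n-1}^{\circ}$ commutes with $e_{\i}$ and thus restricts to an endomorphism of each $S^{\pm}$, giving an injective algebra map. Surjectivity follows by constructing an inverse: an $A \in \End_{\Cl_{0,n-2}^{\circ}}(S^+)$ extends to $\tilde A \in \cC_{0,n-1}^{\circ}$ via $\tilde A(e_j s) := e_j A(s)$ for $s \in S^+$ and any $j \neq \i$, with well-definedness following from $A$ intertwining $e_j e_k \in \Cl_{0,n-2}^{\circ}$. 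This yields $a, d \in \cC_{0,n-1}^{\circ}$ and reduces the problem to exhibiting or ruling out a single non-zero $\Cl_{0,n-2}^{\circ}$-intertwiner $\phi : S^{\pm} \to S^{\mp}$, so that the off-diagonal blocks read $b\phi, c\phi$ with $b, c \in \cC_{0,n-1}^{\circ}$.

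The core step, and the place I expect the main obstacle, is the case-by-case construction of $\phi$. If $n-1$ is even (hence $n-2$ odd), Lemma \ref{VOLUME} places $\omega_{0,n-2}$ in the center of $\Cl_{0,n-2}$, and as an odd-degree element it anti-commutes with $e_{\i}$; thus $\phi := \rho_{0,n-1}(\omega_{0,n-2})$ commutes with $\Cl_{0,n-2}^{\circ}$ and swaps $S^{\pm}$. If $n-1$ is odd, $\omega_{0,n-2}$ commutes with $e_{\i}$ and is useless; the correct candidate is an element $J \in \cC_{0,n-1}$ with $\tau(J) = -1$, which anti-commutes with every vector (including $e_{\i}$) and thus swaps $S^{\pm}$ while still commuting with $\Cl_{0,n-2}^{\circ}$. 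The admissible morphism classification of \cite{AC}, combined with Lemma \ref{VOLUME}, shows that such a $J$ exists precisely when $n-1 \equiv 3, 7 \pmod 8$, with $J^2 = -1$ for $n-1 \equiv 3$ and $J^2 = +1$ for $n-1 \equiv 7$; when $n-1 \equiv 1, 5 \pmod 8$ no such $J$ exists and the off-diagonal blocks must vanish. Pinning down the parity $\tau(J) = -1$ and the sign of $J^2$ cleanly in each residue is the delicate point, and is most efficiently handled through the tabulated admissible-morphism basis of $\cC_{0,n-1}$ rather than by any direct ad hoc construction.
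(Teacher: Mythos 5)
Your proposal follows essentially the paper's route: the same decomposition (\ref{yy}) into $\pm 1$-eigenspaces of $e_{\i}$, Schur's lemma applied block-wise, identification of the diagonal intertwiner algebras with $\cC^{\circ}_{0,n-1}$ via restriction (Proposition \ref{domani}), and the case analysis producing $\phi$ from the volume form $\omega_{0,n-2}$ when $n-1$ is even and from the admissible element $J\in\cC_{0,n-1}$ with $\tau(J)=-1$ of \cite{AC} when $n-1\equiv 3,7\ (\mathrm{mod}\ 8)$ (Propositions \ref{e1}, \ref{e2}). One sub-step is genuinely different: for surjectivity of the restriction map $\cC^{\circ}_{0,n-1}\rightarrow\End(\pi^{\pm}S_{0,n-1})^{\Cl^{\circ}_{0,n-2}}$ the paper invokes the classification of Clifford algebras and the admissible basis of the Schur algebra (subsection \ref{hhyy}), whereas you extend $A$ explicitly by $\tilde A(e_{j}s):=e_{j}A(s)$ for $j\neq\i$. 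This works and is more self-contained: well-definedness follows from $e_{k}e_{j}\in\Cl^{\circ}_{0,n-2}$, and one then checks that $\tilde A$ commutes with every vector $e_{a}$, including $e_{\i}$, so that $\tilde A$ indeed lies in $\cC^{\circ}_{0,n-1}$ (commuting with $\Cl^{\circ}_{0,n-2}$ alone would not suffice); you should state that verification, but it is routine.

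The one under-justified step is the case $n-1\equiv 1,5\ (\mathrm{mod}\ 8)$. The off-diagonal block is a priori only a $\Cl^{\circ}_{0,n-2}$-intertwiner $\pi^{\pm}S_{0,n-1}\rightarrow\pi^{\mp}S_{0,n-1}$, and the non-existence of an element of $\cC_{0,n-1}$ with $\tau=-1$ does not by itself force such an intertwiner to vanish. To close this you must either extend a putative nonzero intertwiner $\phi$ to an endomorphism of $S_{0,n-1}$ anticommuting with all vectors (set $\Phi|_{\pi^{+}S_{0,n-1}}:=\phi$ and $\Phi(e_{j}s):=-e_{j}\phi(s)$ for $j\neq\i$, then check anticommutation with each $e_{a}$ --- the same device as your diagonal extension), which contradicts the table of subsection \ref{hhyy}, or argue as the paper does in Proposition \ref{e3}: in these dimensions $\Cl^{\circ}_{0,n-2}$ is a sum of two simple ideals and the restriction (\ref{iniettiva}) is injective, so the irreducible modules $\pi^{\pm}S_{0,n-1}$ are inequivalent and the off-diagonal blocks vanish by Schur. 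With that repair your argument is complete.
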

About the existence of such $J$ see subsection \ref{hhyy}. The proof of Theorem \ref{ttt} is an immediate consequence of the following four propositions.
\begin{proposition}
\label{e1}
If $\mathbf{n-1\equiv 0\pc (mod\pc 2)}$, the irreducible $\Cl^{\circ}_{0,n-2}$-modules $\pi^{\pm}S_{0,n-1}$ are equivalent. An equivalence is given 
by the following automorphism of $S_{0,n-1}$
$$
\rho_{0,n-1}(\omega_{0,n-2}):\pi^{\pm}S_{0,n-1}\longrightarrow\pi^{\mp}S_{0,n-1}
$$
where $\omega_{0,n-2}\in\Cl_{0,n-2}$ is the volume form of $\Cl_{0,n-2}\subseteq\Cl_{0,n-1}$.
\end{proposition}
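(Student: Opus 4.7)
The plan is to verify that $\rho_{0,n-1}(\omega_{0,n-2})$ has two properties: it anticommutes with $e_\i\cdot$ (so that it swaps the $\pm 1$-eigenspaces of $e_\i$) and it commutes with the whole even part $\Cl^\circ_{0,n-2}$ (so that it intertwines the $\Cl^\circ_{0,n-2}$-actions). Combined with invertibility, this will produce the required equivalence $\pi^{\pm}S_{0,n-1}\xrightarrow{\cong}\pi^{\mp}S_{0,n-1}$.

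The only real computation is a parity count in $\Cl_{0,n-1}$. The element $\omega_{0,n-2}=e_{1}\cdots\widehat{e_\i}\cdots e_{n-1}$ is a product of $n-2$ generators, none of them equal to $e_\i$. Since $e_\i$ anticommutes with each such $e_j$, sliding $e_\i$ across $\omega_{0,n-2}$ produces the sign $(-1)^{n-2}$. Under the hypothesis $n-1\equiv 0\pmod 2$, the integer $n-2$ is odd, so $e_\i\,\omega_{0,n-2}=-\omega_{0,n-2}\,e_\i$. Consequently, for any $Q\in S_{0,n-1}$ satisfying $e_\i\cdot Q=\pm Q$ one gets $e_\i\cdot(\omega_{0,n-2}\cdot Q)=\mp\omega_{0,n-2}\cdot Q$, which shows that $\rho_{0,n-1}(\omega_{0,n-2})$ sends $\pi^{\pm}S_{0,n-1}$ into $\pi^{\mp}S_{0,n-1}$.

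For equivariance, note that $\omega_{0,n-2}$ is the volume form of a Clifford algebra of odd dimension $n-2$, hence by Lemma \ref{VOLUME} it lies in the center of $\Cl_{0,n-2}$; in particular it commutes with all of $\Cl^\circ_{0,n-2}$. Thus $\rho_{0,n-1}(\omega_{0,n-2})$ is a $\Cl^\circ_{0,n-2}$-linear map on $S_{0,n-1}$ whose restriction gives a $\Cl^\circ_{0,n-2}$-linear map $\pi^{\pm}S_{0,n-1}\to\pi^{\mp}S_{0,n-1}$. Invertibility is immediate from the same lemma, which gives $\omega_{0,n-2}^{2}=\pm 1$. Together with the already-noted $\Cl^\circ_{0,n-2}$-irreducibility of the eigenspaces $\pi^{\pm}S_{0,n-1}$, this completes the proof. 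I do not foresee an obstacle beyond tracking the parity $(-1)^{n-2}$ and quoting Lemma \ref{VOLUME} for the central/invertibility properties of $\omega_{0,n-2}$.
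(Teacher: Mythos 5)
Your proof is correct and follows essentially the same route as the paper: the paper's own proof is a single sentence invoking the first part of Lemma~\ref{VOLUME}, and you have simply made explicit the three facts that sentence compresses (the parity count $(-1)^{n-2}=-1$ forcing $e_\i\,\omega_{0,n-2}=-\omega_{0,n-2}\,e_\i$ and hence the swap of the $\pi^{\pm}$-eigenspaces, the commutation with $\Cl^{\circ}_{0,n-2}$ from centrality for odd $m=n-2$, and invertibility from $\omega_{0,n-2}^{2}=\pm 1$). One small simplification you could use: Lemma~\ref{VOLUME} already states directly that the volume form commutes with $\Cl^{\circ}_{0,m}$ in both parities of $m$, so you need not route through centrality.
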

\begin{proof}
It is a straightforward consequence of the first part of Lemma \ref{VOLUME}. 
\end{proof}
In the odd-dimensional case, the above method does not carry over as the volume form $\omega_{0,n-2}$ does not commute with the projection operators. 
\begin{proposition}
\label{e2}
If $\mathbf{n-1\equiv 3\pc (mod\pc 4)}$, the irreducible $\Cl^{\circ}_{0,n-2}$-modules $\pi^{\pm}S_{0,n-1}$ are equivalent. An equivalence is given 
by the following automorphism of $S_{0,n-1}$
\begin{itemize}
\item[] A complex structure $J\in\cC_{0,n-1}$ if $n-1\equiv 3 \pc (mod\pc 8)$, 
\item[] A paracomplex structure $J\in\cC_{0,n-1}$ if $n-1\equiv 7 \pc (mod\pc 8)$\quad.
\end{itemize} 
\end{proposition}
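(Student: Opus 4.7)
The plan is to locate the desired intertwiner $J$ inside the Schur algebra $\cC_{0,n-1}$, specifically as an element of its odd part with respect to the parity involution $\bar{\cdot}$, and then to compute its square.

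The first step is to establish that $\pi^{\pm}S_{0,n-1}$ are equivalent $\Cl^{\circ}_{0,n-2}$-modules. Via the standard isomorphism $\Cl^{\circ}_{0,n-2}\cong\Cl_{n-3,0}$ (obtained from $e_{j}\mapsto e_{j}e_{\i}$) together with the mod-$8$ classification of real Clifford algebras, one finds $\Cl_{n-3,0}\cong\bR(2^{(n-3)/2})$ when $n-1\equiv 3\pc(\text{mod}\pc 8)$ and $\Cl_{n-3,0}\cong\bH(2^{(n-5)/2})$ when $n-1\equiv 7\pc(\text{mod}\pc 8)$. In both cases $\Cl^{\circ}_{0,n-2}$ is simple with a unique irreducible module up to equivalence. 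Since $\pi^{\pm}$ commute with $\Cl^{\circ}_{0,n-2}$, the subspaces $\pi^{\pm}S_{0,n-1}$ are $\Cl^{\circ}_{0,n-2}$-invariant and of equal real dimension $\tfrac{1}{2}\dim_{\bR}S_{0,n-1}$; matching against the dimension of the unique irrep shows they are irreducible, hence equivalent.

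The second step is to exhibit the equivalence inside $\cC_{0,n-1}$, using the decomposition $\cC_{0,n-1}=\cC^{\circ}_{0,n-1}\op\cC^{1}_{0,n-1}$ into $\pm 1$-eigenspaces of the parity involution (\ref{conf}). Any $J\in\cC^{1}_{0,n-1}$ satisfies $Jv=-vJ$ for every $v\in\bR^{0,n-1}$; in particular $Je_{\i}=-e_{\i}J$, so $J$ swaps $\pi^{\pm}S_{0,n-1}$, while commutation of $J$ with $\Cl^{\circ}_{0,n-1}\supseteq\Cl^{\circ}_{0,n-2}$ ensures that the restrictions $J\colon\pi^{\pm}S_{0,n-1}\rightarrow\pi^{\mp}S_{0,n-1}$ are $\Cl^{\circ}_{0,n-2}$-linear. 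The existence of such a nonzero $J$ reduces to the identification $\cC_{0,n-1}\cong\bH$ when $n-1\equiv 3\pc(\text{mod}\pc 8)$ and $\cC_{0,n-1}\cong\bR(2)$ when $n-1\equiv 7\pc(\text{mod}\pc 8)$, obtained from $\Cl^{\circ}_{0,n-1}\cong\Cl_{n-2,0}$ and a Schur-commutant computation; in both cases $\dim_{\bR}\cC^{1}_{0,n-1}=2$.

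The third step is to compute $J^{2}$. Since $J$ anticommutes with every vector, $J^{2}$ commutes with every vector, so $J^{2}\in\cC^{\circ}_{0,n-1}\cong\bC$; since $J^{2}$ also commutes with $J$, it must lie in $\bR\cdot\Id$, hence $J^{2}=\l\Id$ for some $\l\in\bR$. The sign of $\l$ is dictated by the ambient algebra: in the case $\cC_{0,n-1}\cong\bH$ (a division algebra), $\l>0$ would force $J=\pm\sqrt{\l}\Id\in\cC^{\circ}_{0,n-1}$, contradicting $J\in\cC^{1}_{0,n-1}$, so $\l<0$ and rescaling produces a complex structure; in the case $\cC_{0,n-1}\cong\bR(2)$, a direct calculation using the standard embedding $\bC=\cC^{\circ}_{0,n-1}\hookrightarrow\bR(2)$ shows that $\cC^{1}_{0,n-1}$ consists of elements with $J^{2}>0$, and rescaling produces a paracomplex structure. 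The central technical hurdle is the explicit identification of $\cC_{0,n-1}$ as $\bH$ or $\bR(2)$, which rests on Bott-periodicity and the mod-$8$ classification of real Clifford algebras; granted this, the remaining steps are routine Schur-type arguments.
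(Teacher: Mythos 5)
Your proposal is correct in substance and, at bottom, takes the same route as the paper: the paper's own proof simply takes $J$ to be the admissible element of the Schur algebra $\cC_{0,n-1}$ with $\tau(J)=-1$ supplied by \cite{AC} (see the table in subsection \ref{hhyy}), observes that it therefore anticommutes with $e_{\i}\cdot$ and commutes with $\Cl^{\circ}_{0,n-2}$, and reads off from the same source that it is a complex structure for $n-1\equiv 3$ and a para-complex structure for $n-1\equiv 7\ (\mathrm{mod}\ 8)$. Your steps 2 and 3 re-derive exactly these facts from the identifications $\cC_{0,n-1}\cong\bH$, resp.\ $\bR(2)$, with two-dimensional odd part, which are precisely the table entries; so your version is a more self-contained unpacking of the citation rather than a different argument (your step 1 is in fact redundant once an invertible equivariant $J$ is exhibited).

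Two points need fixing or completing. First, the explicit identifications in your step 1 are wrong: since $n-2\equiv 2\ (\mathrm{mod}\ 4)$, the volume form $\omega_{0,n-2}$ is even, central in $\Cl^{\circ}_{0,n-2}$ and squares to $-1$, so in both cases $\Cl^{\circ}_{0,n-2}\cong\bC(2^{(n-4)/2})$, a complex matrix algebra; it is neither $\bR(2^{(n-3)/2})$ nor $\bH(2^{(n-5)/2})$ (note also that $n$ is even in both congruence classes, so those exponents are not even integers). The error is harmless only because the sole facts you use are simplicity and uniqueness of the irreducible real module, which do hold for $\bC(2^{(n-4)/2})$. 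Second, in step 3 the inference ``$J^{2}$ commutes with $J$, hence $J^{2}\in\bR\,\Id$'' is not automatic as stated: an element $a+bI\in\cC^{\circ}_{0,n-1}\cong\bC$ always commutes with $J^{2}$, so you must rule out $b\neq 0$. This can be done by noting that the centralizer in $\bH$ (resp.\ $\bR(2)$) of a non-real element of $\bC$ is $\bC$ itself, which would force $J\in\cC^{\circ}_{0,n-1}$, a contradiction; or, more directly, once $\cC_{0,n-1}$ is identified as $\bH=\bC\oplus\bC j$ or $\bR(2)=\bC\oplus\bC J_{0}$ with odd part the second summand (equivalently, $\{I,J\}=0$ as in \cite{AC}), the sign of $J^{2}$ and the invertibility of $J$ (needed for ``equivalence'') are immediate.
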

\begin{proof}
In both cases the endomorphism $J\in\cC_{0,n-1}$ is an element of the Schur algebra $\cC_{0,n-1}$ with invariant $\tau(J)=-1$ (see \cite{AC} and subsection \ref{hhyy}). In particular, it anticommutes with $e_{\i}\cdot$ and commutes with $\Cl_{0,n-2}^{\circ}$.
Note that the twisted center of the Clifford algebra $\Cl_{0,n-1}$ is trivial (see \cite{H}). It follows that the equivalence $J$ can {\bf not} be represented as an element of $\Cl_{0,n-1}$. 
\end{proof}
\begin{proposition}
\label{e3}
If $\mathbf{n-1\equiv 1\pc (mod\pc 4)}$, the irreducible $\Cl^{\circ}_{0,n-2}$-modules $\pi^{\pm}S_{0,n-1}$ are not equivalent.
\end{proposition}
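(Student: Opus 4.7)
My plan is to find a central element of $\Cl_{0,n-2}^{\circ}$ whose actions on $\pi^{+}S_{0,n-1}$ and $\pi^{-}S_{0,n-1}$ differ; such an element obstructs any $\Cl_{0,n-2}^{\circ}$-equivariant isomorphism between the two summands. The natural candidate is the volume form $\omega_{0,n-2}$. Since $n-1\equiv 1\pmod 4$ gives $n-2\equiv 0\pmod 4$, Lemma \ref{VOLUME} provides both $\omega_{0,n-2}^{2}=1$ and $\omega_{0,n-1}^{2}=1$, and because $n-2$ is even the element $\omega_{0,n-2}$ is an even product, i.e.\ it lies in $\Cl_{0,n-2}^{\circ}$, and sits in the twisted center of $\Cl_{0,n-2}$, hence commutes with all of $\Cl_{0,n-2}^{\circ}$.

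Next I would verify that $\omega_{0,n-2}$ preserves the decomposition (\ref{yy}). A direct count shows it commutes with $e_{\i}$: indeed $\omega_{0,n-2}$ is a product of the $n-2$ vectors $e_{j}$ with $j\neq\i$, each anticommuting with $e_{\i}$, and $(-1)^{n-2}=+1$. Hence $\omega_{0,n-2}$ commutes with both projectors $\pi^{\pm}=\tfrac{1}{2}(\Id\pm e_{\i}\cdot)$, and therefore stabilizes each $\pi^{\pm}S_{0,n-1}$.

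To compute the scalars, I would use that $\omega_{0,n-1}$ is central in $\Cl_{0,n-1}$ (as $n-1$ is odd) and squares to $1$; its $\pm 1$-eigenspaces are $\Cl_{0,n-1}$-submodules, so on the irreducible module $S_{0,n-1}$ it must act as $\varepsilon\,\Id$ for some $\varepsilon\in\{+1,-1\}$. Reordering factors gives $\omega_{0,n-1}=(-1)^{n-1-\i}\,\omega_{0,n-2}\,e_{\i}$, and combined with $e_{\i}^{2}=1$ this yields $\omega_{0,n-2}=(-1)^{n-1-\i}\varepsilon\,e_{\i}\cdot$ on $S_{0,n-1}$. Restricting to $\pi^{\pm}S_{0,n-1}$, where $e_{\i}\cdot$ acts as $\pm\Id$, shows that $\omega_{0,n-2}$ acts as opposite nonzero scalars $\pm c$ with $c=(-1)^{n-1-\i}\varepsilon$.

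The conclusion will then follow formally: any $\Cl_{0,n-2}^{\circ}$-intertwiner $\phi\colon\pi^{+}S_{0,n-1}\to\pi^{-}S_{0,n-1}$ commutes with $\omega_{0,n-2}$, so $c\phi=-c\phi$, forcing $\phi=0$ and thus inequivalence. The main obstacle I anticipate is simply tracking the signs cleanly; once the parity computation $(-1)^{n-2}=+1$ is exploited to secure $[\omega_{0,n-2},e_{\i}]=0$, the rest reduces to two clean applications of Schur-style irreducibility (first on $S_{0,n-1}$ to pin down $\omega_{0,n-1}$, then on the candidate intertwiner).
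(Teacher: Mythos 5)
Your proof is correct, but it follows a genuinely different route from the paper's. The paper disposes of Proposition \ref{e3} in one line by a structural argument: it invokes the injectivity of the restriction (\ref{iniettiva}) together with the Wedderburn structure of $\Cl^{\circ}_{0,n-2}$ (for $n-1\equiv 1$ mod $4$ this even subalgebra is semisimple with two simple ideals, cf.\ $\Cl^{\circ}_{0,n-2}\cong\Cl_{n-3,0}$ and the table in subsection \ref{hhyy}); if $\pi^{+}S_{0,n-1}$ and $\pi^{-}S_{0,n-1}$ were equivalent, one of the two simple ideals would annihilate all of $S_{0,n-1}$, contradicting injectivity. You instead exhibit an explicit separating element: since $n-2$ is even, $\omega_{0,n-2}\in\Cl^{\circ}_{0,n-2}$, and the identity $\omega_{0,n-2}=(-1)^{n-1-\i}\,\omega_{0,n-1}e_{\i}$, combined with the fact that the central involution $\omega_{0,n-1}$ (central because $n-1$ is odd, with $\omega_{0,n-1}^{2}=+1$ by Lemma \ref{VOLUME} precisely because $n-1\equiv 1$ mod $4$) acts as $\varepsilon\,\Id$ on the irreducible module $S_{0,n-1}$, shows that $\omega_{0,n-2}$ acts by the opposite nonzero scalars $\pm c$, $c=(-1)^{n-1-\i}\varepsilon$, on $\pi^{\pm}S_{0,n-1}$; hence every $\Cl^{\circ}_{0,n-2}$-equivariant map $\pi^{+}S_{0,n-1}\to\pi^{-}S_{0,n-1}$ vanishes. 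Your sign bookkeeping is right ($e_{\i}^{2}=+1$ in this paper's convention, $(-1)^{n-2}=+1$), and your approach buys several things: it is self-contained modulo Lemma \ref{VOLUME}, it does not need the classification of $\Cl^{\circ}_{0,n-2}$ nor the injectivity of (\ref{iniettiva}), it proves the slightly stronger fact that there is no nonzero intertwiner at all, it makes transparent where the hypothesis enters (for $n-1\equiv 3$ mod $4$ one has $\omega_{0,n-1}^{2}=-1$ and the argument correctly breaks down, consistently with Proposition \ref{e2}), and it concretely realizes the central idempotents $\tfrac{1}{2}(\Id\pm\omega_{0,n-2})$ as the projectors $\pi^{\pm}$ up to the sign $c$, which is exactly the algebraic structure the paper's terse proof appeals to. The paper's argument, for its part, is shorter once the structure of $\Cl^{\circ}_{0,n-2}$ and the injectivity of (\ref{iniettiva}) are granted, and requires no sign computation.
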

\begin{pf}
It follows from injectivity of (\ref{iniettiva}) and from the fact that $\Cl^{\circ}_{0,n-2}$ is a semisimple matrix algebra.
\end{pf}
The previous propositions imply that 
$$
\varphi=\begin{pmatrix} a & b\phi \\ c\phi & d \end{pmatrix}
$$
where $a,b\in \End(\pi^{+}S_{0,n-1})^{\Cl_{0,n-2}^{\circ}}$ and $c,d\in\End(\pi^{-}S_{0,n-1})^{\Cl_{0,n-2}^{\circ}}$ are uniquely determined. To complete the proof of Theorem \ref{ttt} we need the following
\begin{proposition}
\label{domani}
The linear map
$$
\phantom{cccccc}\cC_{0,n-1}^{\circ}\rightarrow\End(\pi^{\pm}S_{0,n-1})^{\Cl^{\circ}_{0,n-2}}
$$
\be
\label{sur}
C\longrightarrow C|_{\pi^{\pm}S_{0,n-1}}
\ee
which sends an endomorphism $C\in\cC_{0,n-1}^{\circ}$ to its restriction to $\pi^{\pm}S_{0,n-1}$ is a linear isomorphism.
\end{proposition}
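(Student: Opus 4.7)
The plan is to establish well-definedness, injectivity, and surjectivity of the restriction map $C\mapsto C|_{\pi^{\pm}S_{0,n-1}}$ separately. Well-definedness is automatic: any $C\in\cC_{0,n-1}^{\circ}$ commutes with all of $\Cl_{0,n-1}$ (this is $\tau(C)=+1$), in particular with $e_{\i}\cdot$, hence preserves the eigenspaces $\pi^{\pm}S_{0,n-1}$; a fortiori $C$ commutes with $\Cl_{0,n-2}^{\circ}\subseteq\Cl_{0,n-1}$. For injectivity, note that in $\Cl_{0,n-1}$ every generator $e_{m}$ with $m\neq\i$ satisfies $e_{m}^{2}=1$ and anticommutes with $e_{\i}$, so Clifford multiplication by $e_{m}$ restricts to a linear isomorphism $\pi^{+}S_{0,n-1}\to\pi^{-}S_{0,n-1}$. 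In particular $\Cl_{0,n-1}\cdot\pi^{+}S_{0,n-1}=S_{0,n-1}$; since $\ker C$ is a $\Cl_{0,n-1}$-submodule of the irreducible module $S_{0,n-1}$, vanishing of $C|_{\pi^{+}S_{0,n-1}}$ forces $\ker C=S_{0,n-1}$ and hence $C=0$. The argument for the $-$ eigenspace is symmetric.

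Surjectivity I would prove by explicit extension. Given $\varphi\in\End(\pi^{+}S_{0,n-1})^{\Cl_{0,n-2}^{\circ}}$, fix any $j\neq\i$ and define
\[
\psi\colon\pi^{-}S_{0,n-1}\longrightarrow\pi^{-}S_{0,n-1},\qquad\psi(w):=e_{j}\cdot\varphi(e_{j}\cdot w),
\]
and set $\tilde\varphi:=\varphi\oplus\psi$ under the decomposition (\ref{yy}). Two verifications remain. First, $\psi$ must be independent of the auxiliary choice of $j$: for any $k\neq\i$ and $w\in\pi^{-}S_{0,n-1}$, set $v:=e_{k}w\in\pi^{+}S_{0,n-1}$; since $e_{k}^{2}=1$ one has $e_{j}w=e_{j}e_{k}v$, and because $e_{j}e_{k}\in\Cl_{0,n-2}^{\circ}$ commutes with $\varphi$ one computes
\[
e_{j}\varphi(e_{j}w)=e_{j}(e_{j}e_{k})\varphi(v)=e_{k}\varphi(v)=e_{k}\varphi(e_{k}w).
\]
Second, $\tilde\varphi$ must lie in $\cC_{0,n-1}^{\circ}$, i.e.\ commute with every generator $e_{m}$. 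Commutation with $e_{\i}$ is immediate from block-diagonality. For $m\neq\i$ and $v\in\pi^{+}S_{0,n-1}$, specialising the definition to $j=m$ gives $\tilde\varphi(e_{m}v)=e_{m}\varphi(e_{m}^{2}v)=e_{m}\varphi(v)=e_{m}\tilde\varphi(v)$; for $v\in\pi^{-}S_{0,n-1}$ the symmetric computation yields $e_{m}\tilde\varphi(v)=e_{m}^{2}\varphi(e_{m}v)=\varphi(e_{m}v)=\tilde\varphi(e_{m}v)$. Thus $\tilde\varphi\in\cC_{0,n-1}^{\circ}$ restricts to $\varphi$, proving surjectivity.

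The only genuinely non-formal step is the independence of $\psi$ from the choice of $j$; this rests on the fact that two-fold products $e_{j}e_{k}$ of non-distinguished generators lie precisely in $\Cl_{0,n-2}^{\circ}$, which is exactly the algebra under which $\varphi$ was assumed equivariant. All remaining verifications reduce to using $e_{m}^{2}=1$ in $\Cl_{0,n-1}$, so no further structural input is needed and the proposition follows.
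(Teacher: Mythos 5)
Your argument is correct, and your treatment of surjectivity is genuinely different from the paper's. For injectivity you argue that $\ker C$ is a $\Cl_{0,n-1}$-submodule of the irreducible module $S_{0,n-1}$ and that $\Cl_{0,n-1}\cdot\pi^{+}S_{0,n-1}=S_{0,n-1}$; the paper instead invokes directly that $\cC_{0,n-1}^{\circ}$ is a division algebra, so a non-zero $C$ is invertible and cannot kill a non-zero subspace. These are close in spirit (both rest on $\cC_{0,n-1}^{\circ}$ being the commutant of $\Cl_{0,n-1}$ on an irreducible module), but yours avoids quoting the division-algebra structure. For surjectivity, however, the paper defers to a case-by-case analysis: it reads off the type (real, complex or quaternionic) of the irreducible $\Cl_{0,n-2}^{\circ}$-module from the classification tables in \cite{LM}, thereby identifying $\End(\pi^{\pm}S_{0,n-1})^{\Cl_{0,n-2}^{\circ}}$ as a division algebra of the same dimension as $\cC_{0,n-1}^{\circ}$ and concluding by the already-established injectivity. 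Your proof instead constructs the extension explicitly: given $\varphi$ on $\pi^{+}S_{0,n-1}$ you set $\psi(w)=e_{j}\varphi(e_{j}w)$, show this is independent of the auxiliary index $j\neq\i$ (using that $e_{j}e_{k}\in\Cl_{0,n-2}^{\circ}$), and verify $\tilde\varphi=\varphi\oplus\psi$ commutes with every generator $e_{m}$, hence with all of $\Cl_{0,n-1}$, so $\tilde\varphi\in\cC_{0,n-1}^{\circ}$. This buys a self-contained, classification-free argument, which is a genuine gain in robustness and clarity; the one thing to keep in mind is that it needs a $j\neq\i$, so it tacitly assumes $\dim E=n-1\geq 2$ (for $n-1=1$ the decomposition (\ref{yy}) degenerates, with one of $\pi^{\pm}S_{0,n-1}=0$, and the proposition itself is vacuous or false for the trivial factor, so this is not a real restriction).
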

\begin{pf}
The map is well-defined because every $C\in\cC_{0,n-1}^{\circ}$ commutes with the projection operators and with $\Cl^{\circ}_{0,n-1}$. It is injective; indeed consider a non-zero element $C\in\cC^{\circ}_{0,n-1}\cong\bR,\bC$ or $\bH$ and suppose, for example, that $C|_{\pi^{+}S_{0,n-1}}$=0. Then $C$ is not invertible, which is absurd. Surjectivity follows by an investigation case by case which is worked out in the next subsection.
\end{pf} 
The reader who is not interested in the proof of the surjectivity of map (\ref{sur}) and in the description of the (even) Schur algebra depending on $\dim E=n-1\pc$ (mod $8$) can skip temporarily the next subsection.
\subsection{The real division algebra \texorpdfstring{$\cC_{0,n-1}^{\circ}$}{}}\hfill\newline\\
\label{hhyy}
This subsection describes the (even) Schur algebra in Euclidean signature. This provides the details missing in the proof of Proposition \ref{domani} and the background for all the main theorems on spin representations of the Heisenberg algebra proved in later sections. This information follows from \cite{AC}, where an admissible basis of the Schur algebra $\cC_{0,n-1}$ is constructed. In the following table, using the notation from \cite{AC}, we indicate when the basic element is proportional to the volume form $\omega\in\Cl_{0,n-1}$. In this case, the invariant $\tau$ is given in Lemma \ref{VOLUME}. In the opposite case, we indicate the value of the invariant $\tau\in\left\{+1,-1\right\}$. 
\\
\\
{\bf Table.} The value of $\tau$ for the admissible basis of the Schur algebra.
$$
\begin{array}{|c|c|c|c|c|c|c|c|c|c|c|}
\hline
n-1 (\text{mod } 8)&\cC_{0,n-1}&\cC_{0,n-1}^{\circ}&\Id&I&J&K=IJ&E&EI&EJ&EK \\
\hline
0& \bR\oplus\bR &\bR & +1& & & & \omega & & & \\
\hline
1& \bR &\bR & \omega& & & &  & & & \\
\hline
2& \bC &\bR & +1&  &\omega & &  & & & \\
\hline
3& \bH &\bC & +1& -\omega& -1& -1&  & & & \\
\hline
4& \bH\oplus\bH &\bH & +1& +1& -1& -1&  \omega& -1& +1& +1\\
\hline
5& \bH &\bH & \omega& +1& +1& +1&  & & & \\
\hline
6& \bC(2) &\bH & +1& +1& +1& +1& -1& \omega& -1& -1\\
\hline
7& \bR(2) &\bC & +1& -\omega& -1& -1&  & & & \\
\hline
\end{array}
$$
\\
The classification of Clifford algebras in \cite{LM} provides the type (real, complex, quaternionic) of the irreducible representation of $\Cl_{0,n-2}^{\circ}\cong\Cl_{n-3,0}$. This determines the real division algebra $\End(\pi^{\pm}S_{0,n-1})^{\Cl^{\circ}_{0,n-2}}$ and completes the proof of Proposition \ref{domani}. 
\cite{AC} proves that, in the case $n-1\equiv 7 \pc(\text{mod} 8)$, the automorphism $J$ is a para-complex structure satisfying $\left\{I,J\right\}=0$.

\subsection{Quadratic equations on the real division algebra \texorpdfstring{$\cC_{0,n-1}^{\circ}$}{}}\hfill\newline\\
With the help of Theorem \ref{ttt}, equations (\ref{laprima}), (\ref{laseconda}), (\ref{laterza}) are re-written into some quadratic equations on the real division algebra $\cC_{0,n-1}^{\circ}$. Whenever the even Schur algebra is quaternionic, we have a system of quadratic equations on $\bH$ whose solutions have been worked out in subsection \ref{QUAT}. As for the notation, for every complex number $c=a+ib$, the imaginary part $\Im c:=ib$ is defined like for quaternions. For any fixed $1\leq\i \leq n-1$, equation (\ref{laprima}) implies that \pc\pc\pc\pc\pc\pc\pc\pc\pc\pc\pc\pc\pc\pc\pc\pc
$$
\rho_{11}(e_{\i})\in\End_{\bR}(S_{0,n-1})^{\Cl^{\circ}_{0,n-2}}\ni\rho_{22}(e_{\i})\qquad.
$$
Due to Theorem \ref{ttt}, we can identify 
$$
\rho_{22}(e_{\i})\cong
\begin{pmatrix}a_{\i} & b_{\i}\phi \\ c_{\i}\phi & d_{\i}\end{pmatrix}\quad\quad a_{\i},b_{\i},c_{\i},d_{\i}\in\cC_{0,n-1}^{\circ}
$$
with respect to decomposition (\ref{yy}). We introduce the following {\bf new
variables}
\be
\label{campo}
h_{1}^{\i}:=(a_{\i}-d_{\i})\phantom{c},\phantom{c}h_{2}^{\i}:=(b_{\i}+c_{\i})\phantom{c},\phantom{c}h_{3}^{\i}:=(a_{\i}+d_{\i})\phantom{c},\phantom{c}h_{4}^{\i}:=(c_{\i}-b_{\i})
\ee
which are elements of the even Schur algebra $\cC_{0,n-1}^{\circ}$. The following two propositions give solutions of equations (\ref{laprima}) and (\ref{laseconda}).
\begin{proposition}
\label{schurm1}
If $n-1\equiv 0 \pc (mod\pc 2)$, then $\rho_{22}(e_{\i})$ and $\rho_{11}(e_{\i})$ satisfy (\ref{laprima}) if and only if there exist elements (\ref{campo}) of the even Schur algebra $\cC_{0,n-1}^{\circ}$ such that
$$
2\rho_{22}(e_{\i})=h_{3}^{\i}\Id+h_{1}^{\i}e_{\i}+(-1)^{\i+1}
h_{2}^{\i}e_{\i}\omega_{0,n-1}+(-1)^{\i}h_{4}^{\i}\omega_{0,n-1}
$$
and
$$
2\rho_{11}(e_{\i})=h_{3}^{\i}\Id-h_{1}^{\i}e_{\i}+(-1)^{\i+1}
h_{2}^{\i}e_{\i}\omega_{0,n-1}-(-1)^{\i}h_{4}^{\i}\omega_{0,n-1}
$$ 
Equation (\ref{laseconda}) is satisfied if and only if 
$$
\cC_{0,n-1}\ni\frac{\mathrm{p}_{12}}{\sqrt 2}= h_{1}^{\i}\Id+(-1)^{\i+1}h_{2}^{\i}\omega_{0,n-1}
$$
In particular 
$$
h_{1}:=h_{1}^{\i}\in\cC_{0,n-1}^{\circ}\quad,\quad h_{2}:=(-1)^{\i+1}h_{2}^{\i}\in\cC_{0,n-1}^{\circ}
$$
do not depend on $1\leq \i\leq n-1$.
\end{proposition}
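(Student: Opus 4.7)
The plan is to fix $j=\i$ in \eqref{laprima}, deduce that $\rho_{22}(e_\i)$ lies in $\End_\bR(S_{0,n-1})^{\Cl_{0,n-2}^\circ}$ (where $\Cl_{0,n-2}$ denotes the Clifford subalgebra of $\Cl_{0,n-1}$ generated by $\{e_k : k \neq \i\}$), apply Theorem \ref{ttt} to put $\rho_{22}(e_\i)$ in matrix form, and then rewrite the resulting operator in terms of the basis $\{\Id,\,e_\i,\,\omega_{0,n-1},\,e_\i\omega_{0,n-1}\}$ with coefficients in $\cC_{0,n-1}^\circ$. The formula for $\rho_{11}(e_\i)$ will then follow by conjugation with any $e_i$, $i\neq\i$, and the expression for $\mathrm{p}_{12}/\sqrt 2$ from direct substitution into \eqref{laseconda}.

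For the invariance step, \eqref{laprima} with $j=\i$ gives $\rho_{11}(e_\i)=e_i\rho_{22}(e_\i)e_i^{-1}$ for every $i\neq\i$ (recall $e_i^2=1$ in the paper's convention); comparing two such identities yields $[\rho_{22}(e_\i),\,e_ke_i]=0$ for all $k,i\neq\i$, and such products generate $\Cl_{0,n-2}^\circ$. Theorem \ref{ttt} then provides unique $a_\i,b_\i,c_\i,d_\i\in\cC_{0,n-1}^\circ$ such that, in the decomposition \eqref{yy} induced by $e_\i$,
\[
\rho_{22}(e_\i)=\begin{pmatrix}a_\i & b_\i\phi\\ c_\i\phi & d_\i\end{pmatrix},\qquad \phi=\omega_{0,n-2},
\]
by Proposition \ref{e1}. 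Using the projectors $\pi^\pm=\tfrac12(\Id\pm e_\i)$ and the fact that $\omega_{0,n-2}$ has odd degree $n-2$ and so anticommutes with $e_\i$, a short manipulation rewrites the matrix as
\[
2\rho_{22}(e_\i)=h_3^\i\,\Id+h_1^\i\,e_\i+h_2^\i\,\omega_{0,n-2}+h_4^\i\,\omega_{0,n-2}\,e_\i
\]
with the $h_k^\i$ of \eqref{campo}. The Clifford identities $\omega_{0,n-2}\,e_\i=(-1)^{n-1-\i}\omega_{0,n-1}=(-1)^\i\omega_{0,n-1}$ and, multiplying the first by $e_\i$ on the left, $\omega_{0,n-2}=(-1)^{\i+1}e_\i\omega_{0,n-1}$ then yield the claimed expression for $2\rho_{22}(e_\i)$.

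The formula for $2\rho_{11}(e_\i)=e_i\cdot 2\rho_{22}(e_\i)\cdot e_i^{-1}$ (for any fixed $i\neq\i$) follows from the same substitution: conjugation by $e_i$ fixes the Schur coefficients $h_k^\i$ and fixes $\Id$ and $e_\i\omega_{0,n-1}$, while flipping the signs of $e_\i$ and of $\omega_{0,n-1}$ (the latter because $\omega_{0,n-1}$ has even degree $n-1$, so anticommutes with $e_i$). Equation \eqref{laseconda} is then checked by expanding $e_\i\rho_{22}(e_\i)-\rho_{11}(e_\i)e_\i$ using $e_\i^2=1$ and $e_\i\omega_{0,n-1}=-\omega_{0,n-1}e_\i$; the $h_3^\i$- and $h_4^\i$-terms cancel and one reads off $\mathrm{p}_{12}/\sqrt 2=h_1^\i+(-1)^{\i+1}h_2^\i\,\omega_{0,n-1}$.

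For the independence of $\i$, Lemma \ref{VOLUME} gives $\tau(\omega_{0,n-1})=-1$ in the even case, whereas $\cC_{0,n-1}^\circ$ consists of endomorphisms with $\tau=+1$; the two subspaces $\cC_{0,n-1}^\circ\cdot\Id$ and $\cC_{0,n-1}^\circ\cdot\omega_{0,n-1}$ of $\End_\bR(S_{0,n-1})$ thus meet trivially, so the decomposition of $\mathrm{p}_{12}/\sqrt 2$ is unique and the coefficients $h_1^\i$ and $(-1)^{\i+1}h_2^\i$ are forced to be independent of $\i$. The main potential obstacle in executing this plan is sign-bookkeeping---everything reduces to the parities of $n-1$, $n-2$, $\i$, and $n-1-\i$ when pushing $e_\i$ through the volume forms---but once these are fixed the entire argument is a direct calculation.
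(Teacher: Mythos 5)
Your proof is correct and follows essentially the same route as the paper: deduce from (\ref{laprima}) that $\rho_{22}(e_{\i})$ is $\Cl^{\circ}_{0,n-2}$-invariant, apply Theorem \ref{ttt} with $\phi=\omega_{0,n-2}$, and convert the block form into the $\{\Id,e_{\i},\omega_{0,n-1},e_{\i}\omega_{0,n-1}\}$ expression by the same volume-form sign identities, then read off $\mathrm{p}_{12}$ from (\ref{laseconda}). Your sign bookkeeping checks out, and you merely make explicit two points the paper leaves implicit (the commutation with $\Cl^{\circ}_{0,n-2}$ via products $e_ke_i$, and the uniqueness argument giving independence of $\i$).
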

\begin{pf}
The action of $2\rho_{22}(e_{\i})$ sends 
$$
s_{0,n-1}=\pi^{+}s_{0,n-1}+\pi^{-}s_{0,n-1}=\frac{1}{2}(\Id+e_{\i})s_{0,n-1}+\frac{1}{2}(\Id-e_{\i})s_{0,n-1}
$$
into 
$$
a_{\i}(\Id+e_{\i})s_{0,n-1}+b_{\i}\omega_{0,n-2}(\Id-e_{\i})s_{0,n-1}
+c_{\i}\omega_{0,n-2}(\Id+e_{\i})s_{0,n-1}+d_{\i}(\Id-e_{\i})s_{0,n-1}
$$
which equals to 
$$
a_{\i}s_{0,n-1}+a_{\i}e_{\i}s_{0,n-1}+b_{\i}\omega_{0,n-2}s_{0,n-1}-(-1)^{\i}b_{\i}\omega_{0,n-1}s_{0,n-1}+
$$
$$
d_{\i}s_{0,n-1}-d_{\i}e_{\i}s_{0,n-1}+c_{\i}\omega_{0,n-2}s_{0,n-1}+(-1)^{\i}c_{\i}\omega_{0,n-1}s_{0,n-1}
$$
Equations (\ref{laprima}) and (\ref{laseconda}) immediately imply the other results.
\end{pf}
The proof of the following proposition is similar and is omitted.
\begin{proposition}
\label{schu3}
If $n-1\equiv 3\pc (mod\pc 4)$, then $\rho_{22}(e_{\i})$ and $\rho_{11}(e_{\i})$ satisfy (\ref{laprima}) if and only if there exist elements (\ref{campo}) of the even Schur algebra $\cC_{0,n-1}^{\circ}$ such that
$$
2\rho_{22}(e_{\i})=h_{3}^{\i}\Id+h_{1}^{\i}e_{\i}+
h_{2}^{\i}J+h_{4}^{\i}J\circ e_{\i}
$$
and
$$
2\rho_{11}(e_{\i})=h_{3}^{\i}\Id-h_{1}^{\i}e_{\i}-
h_{2}^{\i}J+h_{4}^{\i}J\circ e_{\i}
$$
Equation (\ref{laseconda}) is satisfied if and only if 
$$
\cC_{0,n-1}\ni\frac{\mathrm{p}_{12}}{\sqrt 2}= h_{1}^{\i}\Id-h_{4}^{\i}J
$$
In particular 
$$h_{1}:=h_{1}^{\i}\in\cC_{0,n-1}^{\circ}\quad,\quad h_{4}:=h_{4}^{\i}\in\cC_{0,n-1}^{\circ}$$ 
do not depend on $1\leq \i\leq n-1$.
\end{proposition}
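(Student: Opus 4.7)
The plan is to follow the strategy of Proposition \ref{schurm1}, modifying only the ``off-diagonal'' ingredient supplied by Theorem \ref{ttt}. The key structural difference from the even case is that the intertwiner between $\pi^{+}S_{0,n-1}$ and $\pi^{-}S_{0,n-1}$ is now $J\in\cC_{0,n-1}$, which commutes with $\Cl_{0,n-2}^{\circ}$ but \emph{anticommutes} with every $e_{i}\cdot$ (since $\tau(J)=-1$), rather than the volume form $\omega_{0,n-2}$ used before. This uniform anticommutation will replace the parity-dependent signs $(-1)^{\i}$ appearing previously.

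First I would observe that equation (\ref{laprima}), specialised to $j=\i$ with varying $i\neq\i$, forces $\rho_{22}(e_{\i})$ (and symmetrically $\rho_{11}(e_{\i})$) to commute with every product $e_{i}e_{k}$ for $i,k\neq\i$, hence with the entire subalgebra $\Cl_{0,n-2}^{\circ}$. By Theorem \ref{ttt} applied with $\phi=J$, one may therefore write
$$
\rho_{22}(e_{\i})=\begin{pmatrix} a_{\i} & b_{\i}J\\ c_{\i}J & d_{\i}\end{pmatrix},\qquad a_{\i},b_{\i},c_{\i},d_{\i}\in\cC_{0,n-1}^{\circ},
$$
with respect to the decomposition (\ref{yy}). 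Applying this matrix to a spinor $s=\tfrac{1}{2}(1+e_{\i})s+\tfrac{1}{2}(1-e_{\i})s$, regrouping through $Je_{\i}=-e_{\i}J$, and introducing the variables (\ref{campo}) yields directly the stated formula for $2\rho_{22}(e_{\i})$.

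For $\rho_{11}(e_{\i})$ I would use (\ref{laprima}) in the form $\rho_{11}(e_{\i})=-e_{i}\,\rho_{22}(e_{\i})\,e_{i}$ for any $i\neq\i$ and compute using that $h_{k}^{\i}\in\cC_{0,n-1}^{\circ}$ commutes with $e_{i}$, $e_{i}^{2}=-1$, $e_{i}e_{\i}=-e_{\i}e_{i}$, and $Je_{i}=-e_{i}J$; the four summands flip sign exactly as asserted, and the resulting expression is independent of $i\neq\i$, which is a built-in consistency check. Finally, substituting both formulas into (\ref{laseconda}) the $h_{3}^{\i}$ and $h_{2}^{\i}$ contributions cancel (the former trivially, the latter thanks to $\{e_{\i},J\}=0$), while the $h_{1}^{\i}$ and $h_{4}^{\i}$ terms survive and recombine via $e_{\i}^{2}=-1$ and $e_{\i}(Je_{\i})=J$ into $\tfrac{\mathrm{p}_{12}}{\sqrt{2}}=h_{1}^{\i}\Id-h_{4}^{\i}J$. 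Since $\mathrm{p}_{12}$ is intrinsic, the $\i$-independence of $h_{1}^{\i}$ and $h_{4}^{\i}$ is automatic.

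The only real obstacle is the sign bookkeeping produced by the two anticommutations $Je_{\i}=-e_{\i}J$ and $Je_{i}=-e_{i}J$; this is in fact cleaner than Proposition \ref{schurm1}, where the analogous signs depend on the parity of $\i$ via the passage of $e_{\i}$ across $\omega_{0,n-1}$.
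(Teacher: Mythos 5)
Your strategy is exactly the one the paper intends (it omits this proof as being ``similar'' to Proposition \ref{schurm1}): use (\ref{laprima}) with $j=\i$ to place $\rho_{11}(e_{\i}),\rho_{22}(e_{\i})$ in $\End_{\bR}(S_{0,n-1})^{\Cl^{\circ}_{0,n-2}}$, invoke Theorem \ref{ttt} with $\phi=J$, expand on $\pi^{\pm}S_{0,n-1}$, pass to the variables (\ref{campo}), and then feed the result into (\ref{laseconda}); the $\i$-independence of $h_{1}^{\i},h_{4}^{\i}$ indeed follows because $\mathrm{p}_{12}$ is the same for all $\i$ and $\cC_{0,n-1}=\cC_{0,n-1}^{\circ}\oplus\cC_{0,n-1}^{\circ}J$.

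There is, however, a concrete sign problem in your bookkeeping: you work with $e_{i}^{2}=-1$ (you write $\rho_{11}(e_{\i})=-e_{i}\rho_{22}(e_{\i})e_{i}$ and invoke $e_{\i}^{2}=-1$, $e_{\i}Je_{\i}=J$), whereas in this paper's convention the Clifford relation is $v^{2}=-\langle v,v\rangle$ with $E$ negative definite, so unit vectors of $E$ satisfy $e_{i}^{2}=+1$ (this is also what you tacitly use when you decompose $s=\tfrac12(1+e_{\i})s+\tfrac12(1-e_{\i})s$: the operators $\pi^{\pm}=\tfrac12(\Id\pm e_{\i}\cdot)$ are projections only because $e_{\i}\cdot$ is an involution). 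The $\rho_{11}$ formula is unaffected, since conjugation by $e_{i}$ does not see the sign of $e_{i}^{2}$. But the (\ref{laseconda}) step does: the surviving terms are $e_{\i}\rho_{22}(e_{\i})-\rho_{11}(e_{\i})e_{\i}=(h_{1}^{\i}\Id-h_{4}^{\i}J)\circ(e_{\i}\cdot)^{2}$, so with your stated identities ($e_{\i}^{2}=-1$, hence $e_{\i}Je_{\i}=+J$) the computation actually yields $\tfrac{\mathrm{p}_{12}}{\sqrt2}=-h_{1}^{\i}\Id+h_{4}^{\i}J$, the opposite of what you assert. With the correct convention $e_{\i}^{2}=+1$ (so $e_{\i}Je_{\i}=-J$) the same computation gives the stated $\tfrac{\mathrm{p}_{12}}{\sqrt2}=h_{1}^{\i}\Id-h_{4}^{\i}J$, and the rest of your argument goes through unchanged. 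So the proof is correct after fixing this convention, but as written the key identity is justified by relations that would produce the wrong sign and that are inconsistent with your own use of $\pi^{\pm}$.
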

The following two theorems, together with the previous propositions, reduce the solution of the system of three equations to a system of quadratic equations on the even Schur algebra $\cC_{0,n-1}^{\circ}$ when $n-1\equiv 0$ (mod $2$) and $n-1\equiv 3$ (mod $4$). In the following proposition $m\in\bN$.
\begin{theorem}
Let $1\leq \i\leq n-1$ be fixed. Whenever $n-1=2m$, a pair $(\rho_{11}(e_{\i}),\rho_{22}(e_{\i}))$ which satisfies (\ref{laprima}) and (\ref{laseconda}) is a solution of (\ref{laterza}) if and only if 
\be
\label{sistemone}
\left\{\begin{matrix} [h_{1}^{\i},h_{3}^{\i}]+(-1)^{m+1}\left\{h_{2}^{\i},h_{4}^{\i}\right\}=0\\
(h_{1}^{\i})^{2}+(-1)^{m+1}(h_{2}^{\i})^{2}=0\phantom{cccccc}\\ [h_{1}^{\i},h_{2}^{\i}]=0\phantom{cccccccccccccccccccc}\\ 
\left\{h_{1}^{\i},h_{4}^{\i}\right\}+[h_{3}^{\i},h_{2}^{\i}]=0\phantom{ccccccccccc}
\end{matrix}\right.
\ee
where $h_{1}^{\i},..,h_{4}^{\i}$ are elements of the even Schur algebra $\cC_{0,n-1}^{\circ}$ given by (\ref{campo}).
\end{theorem}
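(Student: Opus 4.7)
The plan is to substitute the explicit parametrizations of $\rho_{11}(e_\iota)$, $\rho_{22}(e_\iota)$, and $\mathrm{p}_{12}$ provided by Proposition \ref{schurm1} directly into equation (\ref{laterza}) and expand. The computation is pure algebra in a finite-dimensional $\cC_{0,n-1}^\circ$-bimodule, but two incompatible features have to be tracked at once: elements of $\cC_{0,n-1}^\circ$ commute with the Clifford generators $e_\iota$ and $\omega:=\omega_{0,n-1}$ (since they have $\tau = +1$), but they do not in general commute with one another when $\cC_{0,n-1}^\circ\cong\bH$; at the same time, for $n-1=2m$, the volume form $\omega$ \emph{anticommutes} with every $e_\iota$, and $\omega^{2}=(-1)^{m}$ acts as a scalar by Lemma \ref{VOLUME}.

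Concretely, I would write $\rho_{22}=A+B$ and $\rho_{11}=A-B$, where
\[
2A = h_{3}^{\iota}\operatorname{Id}+h_{2} e_{\iota}\omega,\qquad 2B = h_{1} e_{\iota}+(-1)^{\iota}h_{4}^{\iota}\omega,
\]
with $h_{1}:=h_{1}^{\iota}$ and $h_{2}:=(-1)^{\iota+1}h_{2}^{\iota}$ both $\iota$-independent. Then equation (\ref{laterza}), $\rho_{11}(e_{\iota})\mathrm{p}_{12}-\mathrm{p}_{12}\rho_{22}(e_{\iota})=0$, becomes the single identity $[A,\mathrm{p}_{12}]=\{B,\mathrm{p}_{12}\}$, with $\mathrm{p}_{12}/\sqrt{2}=h_{1}+h_{2}\omega$. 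The advantage of the $A,B$ split is that it halves the number of terms and exposes which pieces of the answer must be commutators (from $[A,\mathrm{p}_{12}]$) and which must be anticommutators (from $\{B,\mathrm{p}_{12}\}$), which is exactly what the four equations of (\ref{sistemone}) display.

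I would then expand both sides using the commutation recipe sketched above. Every summand reduces, modulo $\omega^{2}=(-1)^{m}$, to an element of the $\cC_{0,n-1}^\circ$-span of the four symbols $\{\operatorname{Id},\,e_{\iota},\,\omega,\,e_{\iota}\omega\}$, which are linearly independent in $\End_{\bR}(S_{0,n-1})$ because the spin representation is faithful on $\Cl_{0,n-1}$. Setting each of the four coefficients to zero yields, in order, the four relations of (\ref{sistemone}): the $e_{\iota}$-coefficient produces $h_{1}^{2}+(-1)^{m+1}h_{2}^{2}=0$ (combining the $-2h_{1}^{2}$ coming from $\{h_{1}e_{\iota},h_{1}\}$ with the $2(-1)^{m}h_{2}^{2}$ coming from $[h_{2}\omega,h_{2}e_{\iota}\omega]$); the $e_{\iota}\omega$-coefficient forces $[h_{1},h_{2}]=0$; the constant coefficient and the $\omega$-coefficient give the remaining two relations involving $h_{3}^{\iota}$ and $h_{4}^{\iota}$. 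Reverting from $(h_{1},h_{2})$ to $(h_{1}^{\iota},h_{2}^{\iota})=(h_{1},(-1)^{\iota+1}h_{2})$ only changes signs in a way that is absorbed by the symmetry of anticommutators and the antisymmetry of commutators, so the system (\ref{sistemone}) appears exactly in the stated form, independent of $\iota$.

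The main obstacle is pure sign-accounting: the simultaneous interplay between $\omega e_{\iota}=-e_{\iota}\omega$, $\omega^{2}=(-1)^{m}$, the $(-1)^{\iota}$ weights in the parametrization, and the non-commutativity of $\cC_{0,n-1}^\circ$ produces many terms that appear to cross-cancel or conflict until they are correctly split into symmetric and skew-symmetric parts in the pairs $(h_{1},h_{3}^{\iota})$, $(h_{2},h_{4}^{\iota})$. There is no conceptual difficulty beyond this bookkeeping: both implications (the four identities force (\ref{laterza}), and conversely) follow from the same expansion, since the four symbols above form a basis of the relevant bimodule and the constraint on each coefficient is independent of the others.
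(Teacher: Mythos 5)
Your proposal is correct and follows essentially the same route as the paper: substitute the parametrization of Proposition \ref{schurm1} into (\ref{laterza}), expand using $\omega_{0,n-1}e_{\i}=-e_{\i}\omega_{0,n-1}$, $\omega_{0,n-1}^{2}=(-1)^{m}$ and the fact that $\cC_{0,n-1}^{\circ}$ commutes with Clifford multiplication, and equate coefficients of $\Id$, $e_{\i}$, $\omega_{0,n-1}$, $e_{\i}\omega_{0,n-1}$; your split $\rho_{22}=A+B$, $\rho_{11}=A-B$, which turns (\ref{laterza}) into $[A,\mathrm{p}_{12}]=\{B,\mathrm{p}_{12}\}$, is only a repackaging of the paper's term-by-term comparison, and the four relations you extract do reproduce (\ref{sistemone}).

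The one step you should tighten is the coefficient comparison itself. What is needed is not $\bR$-linear independence of $\Id,e_{\i},\omega_{0,n-1},e_{\i}\omega_{0,n-1}$ (which is all that faithfulness of $\rho_{0,n-1}$ gives), but that no nontrivial combination of them with coefficients in $\cC_{0,n-1}^{\circ}$ vanishes; for $n-1\equiv 4,6$ (mod $8$) these coefficients are quaternionic and are in general \emph{not} in the image of $\Cl_{0,n-1}$, so faithfulness alone does not settle the point. The statement is nonetheless true, and the paper's proof supplies exactly the missing argument: since elements of $\cC_{0,n-1}^{\circ}$ commute with all Clifford multiplications, conjugation by $e_{\i}$ and by any $e_{i}$ with $i\neq\i$ fixes the coefficients while acting on $\Id$, $e_{\i}$, $\omega_{0,n-1}$, $e_{\i}\omega_{0,n-1}$ with the four distinct sign patterns $(+,+)$, $(+,-)$, $(-,-)$, $(-,+)$; projecting onto these sign components and using the invertibility of $e_{\i}$, $\omega_{0,n-1}$ and $e_{\i}\omega_{0,n-1}$ forces each coefficient to vanish separately. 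With this replacement for your independence claim, your argument is complete and gives both implications, as you say.
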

\begin{pf}
Equation (\ref{laterza}) is equivalent to comparing
$$
[h_{1}^{\i}\Id+(-1)^{\i+1}h_{2}^{\i}\omega_{0,2m}]
\circ[h_{3}^{\i}\Id+h_{1}^{\i}e_{\i}
+h_{2}^{\i}\omega_{0,2m-1}+(-1)^{\i}h_{4}^{\i}\omega_{0,2m}]
$$
with
$$
[h_{3}^{\i}\Id-h_{1}^{\i}e_{\i}+h_{2}^{\i}\omega_{0,2m-1}-(-1)^{\i}h_{4}^{\i}\omega_{0,2m}]
\circ[h_{1}^{\i}\Id+(-1)^{\i+1}h_{2}^{\i}\omega_{0,2m}]\phantom{c}.
$$
The left hand side of the equation equals to
$$
h_{1}^{\i}h_{3}^{\i}\Id+(h_{1}^{\i})^{2}e_{\i}
+h_{1}^{\i}h_{2}^{\i}\omega_{0,2m-1}+(-1)^{\i}h_{1}^{\i}h_{4}^{\i}\omega_{0,2m}
$$
$$
+(-1)^{\i+1}h_{2}^{\i}h_{3}^{\i}\omega_{0,2m}-h_{2}^{\i}h_{1}^{\i}\omega_{0,2m-1}
+(-1)^{m+1}(h_{2}^{\i})^{2}e_{\i}+(-1)^{m+1}h_{2}^{\i}h_{4}^{\i}\Id
$$
while the right hand side is
$$
h_{3}^{\i}h_{1}^{\i}\Id-(h_{1}^{\i})^{2}e_{\i}
+h_{2}^{\i}h_{1}^{\i}\omega_{0,2m-1}-(-1)^{\i}h_{4}^{\i}h_{1}^{\i}\omega_{0,2m}
$$
$$
-(-1)^{\i}h_{3}^{\i}h_{2}^{\i}\omega_{0,2m}-h_{1}^{\i}h_{2}^{\i}\omega_{0,2m-1}
-(-1)^{m+1}(h_{2}^{\i})^{2}e_{\i}-(-1)^{m+1}h_{4}^{\i}h_{2}^{\i}\Id
$$ 
Examining (anti)-commutativity of the various terms with respect to $e_{\i}$ and $e_{i}$ ($i\neq \i$), the theorem follows.
\end{pf}
If $n-1\equiv 3$ (mod $4$), the even Schur algebra $\cC_{0,n-1}^{\circ}$ is isomorphic to $\bC=\bR+I\bR$. In this context
$$\overline{\phantom{c}}:\bC\cong\cC_{0,n-1}^{\circ}\rightarrow\cC_{0,n-1}^{\circ}\cong\bC$$ 
denotes the usual notion of conjugation (which can not be confused with (\ref{conf}) because (\ref{conf}) is the identity, when restricted to the even Schur algebra $\cC_{0,n-1}^{\circ}$).
\begin{theorem}
\label{notte}
Let $1\leq \i\leq n-1$ be fixed. Whenever $n-1\equiv 3\pc (mod\pc 4)$, a pair $(\rho_{11}(e_{\i}),\rho_{22}(e_{\i}))$ which satisfies (\ref{laprima}) and (\ref{laseconda}) is a solution of (\ref{laterza}) if and only if 
\be
\label{sistemonedisp}
\left\{\begin{matrix} h_{2}^{\i}\overline{h_{4}^{\i}}\in I\bR\\
(h_{1}^{\i})^{2}=J^{2}\cdot|h_{4}^{\i}|^{2}\phantom{cccccc}\\ 
h_{2}^{\i}\Re(h_{1}^{\i})+h_{4}^{\i}\Im(h_{3}^{\i})=0\\ 
h_{1}^{\i}h_{4}^{\i}-h_{4}^{\i}\overline{h_{1}^{\i}}=0
\end{matrix}\right.
\ee
where $h_{1}^{\i},..,h_{4}^{\i}$ are elements of the even Schur algebra $\cC_{0,n-1}^{\circ}\cong\bC$ given by (\ref{campo}).
\end{theorem}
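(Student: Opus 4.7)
The plan is to substitute the explicit formulas from Proposition \ref{schu3} for $\rho_{11}(e_\i)$, $\rho_{22}(e_\i)$, and $\mathrm{p}_{12}$ directly into equation (\ref{laterza}), expand both sides using a handful of commutation rules on $\End_\bR(S_{0,n-1})$, and then match the coefficients of a canonical $\cC_{0,n-1}^\circ$-basis of the subalgebra that the expanded expressions span. Dropping the superscript $\i$ for brevity, the relation (\ref{laterza}) becomes, up to a common scalar factor,
\[
(h_1\Id-h_4J)\circ(h_3\Id+h_1e_\i+h_2J+h_4Je_\i) = (h_3\Id-h_1e_\i-h_2J+h_4Je_\i)\circ(h_1\Id-h_4J).
\]

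The calculation rests on four algebraic facts: (i) every $c\in\cC_{0,n-1}^\circ$ commutes with $\Cl_{0,n-1}$ and in particular with $e_\i$; (ii) the endomorphism $J$ has $\tau(J)=-1$, so $Je_\i=-e_\i J$; (iii) the anticommutation $\{I,J\}=0$ from subsection \ref{hhyy}, combined with $\cC_{0,n-1}^\circ=\bR\oplus\bR I\cong\bC$, forces $Jc=\bar c J$ for every $c\in\cC_{0,n-1}^\circ$; (iv) $J^2=\pm 1$ according to whether $n-1\equiv 3$ or $7\pmod 8$, and I will write $J^2$ uniformly for this sign. Applying (i)--(iv) to push every $J$ to the right of all $\cC^\circ$-scalars and to bring each $Je_\i$ into a standard order, both sides of the identity above take the normal form $A\Id+Be_\i+CJ+DJe_\i$ with $A,B,C,D\in\cC_{0,n-1}^\circ$.

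These four generators are $\cC_{0,n-1}^\circ$-linearly independent inside $\End_\bR(S_{0,n-1})$: a relation $c_0+c_1e_\i+c_2J+c_3Je_\i=0$ evaluated on the $\pm 1$-eigenspaces $\pi^\pm S_{0,n-1}$ of $e_\i$ forces each $c_i=0$ by the injectivity statement in Proposition \ref{domani}. Equating the coefficients of $\Id$, $e_\i$, $J$, $Je_\i$ on the two sides therefore yields four scalar equations in $\bC$. Using the identities $c+\bar c=2\Re c$, $c-\bar c=2\Im c$, $c\bar c=|c|^2$, and commutativity of $\bC$, these four equations reduce respectively to the four lines of (\ref{sistemonedisp}): the $e_\i$-coefficient gives $h_1^2=J^2|h_4|^2$; the constant coefficient gives $h_4\bar h_2+h_2\bar h_4=2\Re(h_2\bar h_4)=0$, that is $h_2\bar h_4\in I\bR$; the $Je_\i$-coefficient gives $h_1h_4-h_4\bar h_1=0$; the $J$-coefficient gives $h_2\Re(h_1)+h_4\Im(h_3)=0$.

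The main obstacle is purely clerical: one has to track carefully the complex conjugation produced each time $J$ crosses an element of $\cC_{0,n-1}^\circ\cong\bC$, together with the sign of $J^2$, which is the sole feature distinguishing the two subcases $n-1\equiv 3$ and $n-1\equiv 7\pmod 8$. Once the normal form and the commutation rules are fixed, the matching of coefficients is mechanical and no conceptual obstruction is expected.
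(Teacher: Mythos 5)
Your proposal is correct and follows essentially the same route as the paper: substitute the formulas of Proposition \ref{schu3} into (\ref{laterza}), expand using $Je_{\i}=-e_{\i}J$ and $Jc=\overline{c}J$ for $c\in\cC_{0,n-1}^{\circ}$, and match the coefficients of $\Id$, $e_{\i}$, $J$, $J\circ e_{\i}$, which yields exactly the four lines of (\ref{sistemonedisp}). The only difference is that you make explicit the linear independence of these four generators over $\cC_{0,n-1}^{\circ}$ (via Proposition \ref{domani}), a point the paper leaves implicit.
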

\begin{pf}
Equation (\ref{laterza}) is equivalent to comparing
$$
[h_{1}^{\i}\Id-h_{4}^{\i}J]
\circ[h_{3}^{\i}\Id+h_{1}^{\i}e_{\i}+h_{2}^{\i}J+h_{4}^{\i}J\circ e_{\i}]
$$
with
$$
[h_{3}^{\i}\Id-h_{1}^{\i}e_{\i}-
h_{2}^{\i}J+h_{4}^{\i}J\circ e_{\i}]
\circ[h_{1}^{\i}\Id-h_{4}^{\i}J]\phantom{c}.
$$
The left hand side of the equation equals to
$$
h_{1}^{\i}h_{3}^{\i}\Id+(h_{1}^{\i})^{2}e_{\i}
+h_{1}^{\i}h_{2}^{\i}J+h_{1}^{\i}h_{4}^{\i}J\circ e_{\i}
-h_{4}^{\i}\overline{h_{3}^{\i}}J-h_{4}^{\i}\overline{h_{1}^{\i}}J\circ e_{\i}
-J^{2}\circ h_{4}^{\i}\overline{h_{2}^{\i}}\Id-J^{2}\circ h_{4}^{\i}\overline{h_{4}^{\i}}e_{\i}
$$
while the right hand side is
$$
h_{3}^{\i}h_{1}^{\i}\Id-(h_{1}^{\i})^{2}e_{\i}
-h_{2}^{\i}\overline{h_{1}^{\i}}J+(h_{4}^{\i})\overline{h_{1}^{\i}}J\circ e_{\i}
-h_{3}^{\i}h_{4}^{\i}J-h_{1}^{\i}h_{4}^{\i}J\circ e_{\i}
+J^{2}\circ h_{2}^{\i}\overline{h_{4}^{\i}}\Id+J^{2}\circ h_{4}^{\i}\overline{h_{4}^{\i}}e_{\i}.
$$
\end{pf}
\begin{remark}\rm{
In the case of zero charge, equations (\ref{sistemone}) and (\ref{sistemonedisp}) are trivially satisfied.
}\end{remark}
In the remaining case $n-1\equiv 1$ (mod $4$) it is proved that any representation has zero charge.
\begin{theorem}
\label{schurmdisp1}
If $n-1\equiv 1\pc (mod\pc 4)$, then every representation of the Heisenberg algebra has zero charge.
\end{theorem}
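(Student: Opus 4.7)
The strategy mirrors Propositions \ref{schurm1} and \ref{schu3}, specialized to the dimension regime $n-1 \equiv 1 \pmod{4}$. Here Proposition \ref{e3} asserts that the $\Cl^\circ_{0,n-2}$-modules $\pi^\pm S_{0,n-1}$ are inequivalent, so the intertwiner $\phi$ of Theorem \ref{ttt} vanishes and every $\Cl^\circ_{0,n-2}$-invariant endomorphism of $S_{0,n-1}$ is block-diagonal with entries in $\cC^\circ_{0,n-1}$ (by Proposition \ref{domani}). The plan is to show that this block structure, combined with equations (\ref{laseconda}) and (\ref{laterza}), collapses $\mathrm{p}_{12}$ to an element of $\cC^\circ_{0,n-1}$ whose square is zero.

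The first task is, for each $1 \leq \iota \leq n-1$, to prove that $\rho_{22}(e_\iota)$ commutes with $\Cl^\circ_{0,n-2}$, where $\Cl_{0,n-2}$ is generated by the $e_i$ with $i \neq \iota$. Equation (\ref{laprima}) in the form $e_i\rho_{22}(e_\iota) = \rho_{11}(e_\iota)e_i$, together with its dual $e_i\rho_{11}(e_\iota) = \rho_{22}(e_\iota)e_i$ (obtained by conjugation with $e_i$ and the identity $e_i^2 = -1$), gives $e_ie_k\rho_{22}(e_\iota) = \rho_{22}(e_\iota)e_ie_k$ for all $i,k \neq \iota$. Theorem \ref{ttt} then forces the block-diagonal shape
\[
2\rho_{22}(e_\iota) = h_3^\iota\,\Id + h_1^\iota e_\iota, \qquad 2\rho_{11}(e_\iota) = h_3^\iota\,\Id - h_1^\iota e_\iota,
\]
with $h_1^\iota, h_3^\iota \in \cC^\circ_{0,n-1}$ (the sign flip in $\rho_{11}$ coming from the conjugation relation). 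Substituting in (\ref{laseconda}) yields $\mathrm{p}_{12} = -\sqrt{2}\,h_1^\iota$; in particular $h_1 := h_1^\iota$ is independent of $\iota$.

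The final step exploits equation (\ref{laterza}), $\mathrm{p}_{12}\circ\rho_{22}(e_j) = \rho_{11}(e_j)\circ\mathrm{p}_{12}$. Since $h_1 \in \cC^\circ_{0,n-1}$ commutes with the Clifford action, expanding both sides collapses to
\[
[h_1,h_3^j]\,\Id + 2h_1^2\,e_j = 0.
\]
The key ingredient is the elementary degree-separation observation: if $X\,\Id + Y e_j = 0$ with $X,Y \in \cC^\circ_{0,n-1}$, then $X = Y = 0$. Indeed, $X = -Ye_j$ commutes with every $e_k$ ($k \neq j$) because $X \in \cC^\circ_{0,n-1}$, whereas $Ye_j$ anticommutes with $e_k$, so $2Ye_je_k = 0$ and invertibility of $e_je_k$ forces $Y = 0$. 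Applied to the displayed equation this gives $h_1^2 = 0$, and since $\cC^\circ_{0,n-1}$ is a real division algebra ($\cong \bR,\bC$ or $\bH$) we conclude $h_1 = 0$, hence $\mathrm{p}_{12} = 0$. The only mildly delicate point in this plan is the synchronized use of both forms of (\ref{laprima}) in the first step; once the block-diagonal normal form is in hand, the reduction to a single degree-one equation in $\cC^\circ_{0,n-1} \oplus \cC^\circ_{0,n-1}\cdot e_j$ is routine.
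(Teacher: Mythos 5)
Your proof is correct and follows essentially the same route as the paper's: obtain the block-diagonal normal forms for $\rho_{11}(e_\iota), \rho_{22}(e_\iota)$ from (\ref{laprima}) together with Theorem \ref{ttt} (where $\phi=0$ by Proposition \ref{e3}), use (\ref{laseconda}) to express $\mathrm{p}_{12}$ in terms of $h_1^\iota$, then substitute into (\ref{laterza}) and separate the $\Id$- and $e_j$-components to deduce $(h_1)^2=0$, hence $h_1=0$ in the division algebra $\cC_{0,n-1}^\circ$. Your explicit derivation of the dual form of (\ref{laprima}) and of the degree-separation lemma fills in details the paper leaves implicit, which is a nice touch. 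One small slip: with the paper's convention $v^2=-\langle v,v\rangle$ and the negative-definite metric on $E\cong\bR^{0,n-1}$, the generators satisfy $e_i^2=+1$, not $-1$; consequently $\mathrm{p}_{12}=+\sqrt{2}\,h_1$ (matching the paper's Proposition \ref{schurm1} analogue), but since the sign cancels in (\ref{laterza}), the final equation $[h_1,h_3^j]\Id+2h_1^2 e_j=0$ and the conclusion are unaffected.
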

\begin{pf}
The endomorphisms $\rho_{22}(e_{\i})$ and $\rho_{11}(e_{\i})$ satisfy (\ref{laprima}) if and only if there exist elements (\ref{campo}) of the even Schur algebra $\cC_{0,n-1}^{\circ}$ satisfying
$$
2\rho_{22}(e_{\i})=h_{3}^{\i}\Id+h_{1}^{\i}e_{\i}\quad,\quad
2\rho_{11}(e_{\i})=h_{3}^{\i}\Id-h_{1}^{\i}e_{\i}\quad.
$$
Equation (\ref{laseconda}) is then satisfied if and only if 
$\frac{\mathrm{p}_{12}}{\sqrt 2}=h_{1}^{\i}\Id
$.
Equation (\ref{laterza}) implies that
$$
h_{1}^{\i}(h_{3}^{\i}\Id+h_{1}^{\i}e_{\i})=(h_{3}^{\i}\Id-h_{1}^{\i}e_{\i})h_{1}^{\i}\Rightarrow
\left\{\begin{matrix} [h_{1}^{\i},h_{3}^{\i}]=0\\
(h_{1}^{\i})^{2}=0\\ 
\end{matrix}\right.\Rightarrow h_{1}^{\i}=0
$$
from which the assertion of the theorem follows.
\end{pf}
The solutions of the systems (\ref{sistemone}) and (\ref{sistemonedisp}) are worked out, case by case, in the next two subsections. 
We first analyze the case of irreducible $\so(\gm)$-module $S$ and then the case when semi-spinors appear. 
\subsection{Case of irreducible spin module \texorpdfstring{$S$}{S}}\hspace{0 cm}\newline\\
When semi-spinors do not exist, a spin representation of the Heisenberg algebra has necessarily zero charge. Indeed, the following theorem holds. 
\begin{theorem}
\label{al}
If $\dim E=n-1\equiv 1,2,3,5\pc (mod\pc 8)$, then every representation of the Heisenberg algebra has zero charge.
\end{theorem}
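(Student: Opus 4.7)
\begin{pf}[Proof plan]
The plan is to split the four residues into the groups already handled by earlier results and the two residues that still require work. For $n-1\equiv 1,5\pmod 8$ the statement is nothing other than Theorem \ref{schurmdisp1}: in that range $n-1\equiv 1\pmod 4$, and the argument given there already shows that the only solution of the system (\ref{laprima})--(\ref{laterza}) has $h_{1}^{\iota}=0$, hence $\mathrm{p}_{12}=0$. So everything reduces to the two remaining residues $n-1\equiv 2\pmod 8$ and $n-1\equiv 3\pmod 8$.

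For $n-1\equiv 2\pmod 8$ I would invoke Proposition \ref{schurm1} and the system (\ref{sistemone}). Writing $n-1=2m$, here $m\equiv 1\pmod 4$, so $(-1)^{m+1}=+1$ and the second equation of (\ref{sistemone}) reads $(h_{1}^{\iota})^{2}+(h_{2}^{\iota})^{2}=0$. Consulting the table in subsection \ref{hhyy}, the even Schur algebra is $\cC_{0,n-1}^{\circ}\cong\bR$ in this signature, so the equation takes place in the reals and forces $h_{1}^{\iota}=h_{2}^{\iota}=0$. The charge formula from Proposition \ref{schurm1}, namely $\mathrm{p}_{12}/\sqrt{2}=h_{1}\,\mathrm{Id}+(-1)^{\iota+1}h_{2}\,\omega_{0,n-1}$, then vanishes, which is exactly the zero charge conclusion.

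For $n-1\equiv 3\pmod 8$ I would use Proposition \ref{schu3} and the system (\ref{sistemonedisp}) from Theorem \ref{notte}. From the table, $\cC_{0,n-1}^{\circ}\cong\bC=\bR+I\bR$, and from subsection \ref{hhyy} the endomorphism $J$ in this residue is a genuine complex structure, so $J^{2}=-1$. The second equation of (\ref{sistemonedisp}) becomes $(h_{1}^{\iota})^{2}=-|h_{4}^{\iota}|^{2}$, i.e. $(h_{1}^{\iota})^{2}$ is a non-positive real number in $\bC$. Writing $h_{1}^{\iota}=a+Ib$, the imaginary part $2ab$ must vanish and the real part $a^{2}-b^{2}$ must be $\le 0$, forcing $a=0$ and $b^{2}=|h_{4}^{\iota}|^{2}$; thus $h_{1}^{\iota}=Ib$ is purely imaginary and $\overline{h_{1}^{\iota}}=-h_{1}^{\iota}$. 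Feeding this into the fourth equation $h_{1}^{\iota}h_{4}^{\iota}-h_{4}^{\iota}\overline{h_{1}^{\iota}}=0$ of (\ref{sistemonedisp}) yields $2Ib\,h_{4}^{\iota}=0$ in the commutative algebra $\bC$, whence $b=0$ or $h_{4}^{\iota}=0$; either alternative, combined with $b^{2}=|h_{4}^{\iota}|^{2}$, gives $h_{1}^{\iota}=h_{4}^{\iota}=0$. By Proposition \ref{schu3} the charge $\mathrm{p}_{12}/\sqrt{2}=h_{1}\,\mathrm{Id}-h_{4}J$ is therefore zero.

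I do not anticipate any real obstacle: the whole argument is a case-by-case exploitation of the two facts (i) that the relevant real division algebras $\cC_{0,n-1}^{\circ}$ in the two remaining residues are $\bR$ and $\bC$ respectively, and (ii) that the sign $(-1)^{m+1}$, resp.\ the scalar $J^{2}$, comes out as $+1$, resp.\ $-1$, precisely in these residues, which turns the key quadratic relation of (\ref{sistemone}) or (\ref{sistemonedisp}) into a positive-definite equation with only the trivial solution. The mildly delicate point is keeping track of these signs, which explains why the residues $0,4,6,7\pmod 8$ are not covered by the theorem, consistent with the existence of non-zero charge representations in those dimensions treated later in the paper.
\end{pf}
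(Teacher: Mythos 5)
Your proposal is correct and follows exactly the paper's route: residues $1,5\pmod 8$ are delegated to Theorem \ref{schurmdisp1}, residue $2\pmod 8$ is killed by the equation $(h_{1}^{\i})^{2}+(h_{2}^{\i})^{2}=0$ in $\cC_{0,n-1}^{\circ}\cong\bR$ from (\ref{sistemone}), and residue $3\pmod 8$ by (\ref{sistemonedisp}) with $\cC_{0,n-1}^{\circ}\cong\bC$ and $J^{2}=-1$. The only difference is that the paper states these reductions without computation, whereas you spell out the sign $(-1)^{m+1}$ and the complex-variable argument explicitly; the details check out.
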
 
\begin{proof}
If $n-1\equiv 2$ (mod $8$), system (\ref{sistemone}) reduces to
$
h_{1}^{\i}=0=h_{2}^{\i}
$. If $n-1\equiv 3$ (mod $8$), system (\ref{sistemonedisp}) reduces to $h_{1}^{\i}=h_{4}^{\i}=0$. If $n-1\equiv 1,5$ (mod $8$), the theorem has been proved in Theorem \ref{schurmdisp1}. This completes the proof of the theorem. 
\end{proof}
These are precisely the dimensions for which semi-spinors do not exist. When semi-spinors exist, we show the presence of non-zero charge representations.
\subsection{Case of reducible spin module \texorpdfstring{$S$}{S}}\hspace{0 cm}\newline\\
Non-zero charge representations $\rho:\gheis_{b}\rightarrow\ggl_{\bR}(S)$ of the Heisenberg algebra are described in terms of elements $$h_{1},h_{2},h_{3},h_{4}$$ which are either elements of $\cC^{\circ}_{0,n-1}$ or linear maps from $E$ to $\cC_{0,n-1}^{\circ}$. In the following four theorems, the map $$\rho_{12}\in\Hom_{\bR}(E,\End_{\bR}(S_{0,n-1}))$$ is a suitable map (see Def. \ref{suitable}). 
\begin{theorem}
\label{0p}
If $n-1\equiv 0\pc (mod\pc 8)$, every non-zero charge representation of the Heisenberg algebra is given by
$$
\rho(\mathrm{p})=\sqrt 2\begin{pmatrix} 0 & h_{1}(\Id\pm E) \\ 0 & 0 \end{pmatrix}
$$
$$
\rho(e)=\frac{1}{2}\begin{pmatrix} h_{3}(e)\Id-h_{1}e\pm h_{1}eE & 2\rho_{12}(e) \\ 0 &h_{3}(e)\Id+h_{1}e\pm h_{1}eE  \end{pmatrix}
$$ 
where $h_{1}\in\bR$ satisfies $h_{1}\neq 0$ and $h_{3}\in\Hom_{\bR}(E,\bR)$.
\end{theorem}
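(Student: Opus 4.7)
The plan is to apply Lemma~\ref{p} and Proposition~\ref{schurm1} (valid since $n-1$ is even) to encode the structure of $\rho$ in terms of scalar parameters $h_{1}^{\i},h_{2}^{\i},h_{3}^{\i},h_{4}^{\i}\in\cC_{0,n-1}^{\circ}\cong\bR$, to eliminate variables using the quadratic system (\ref{sistemone}) specialised to the commutative algebra $\bR$, and finally to verify that the residual of the extra condition (\ref{abelian}) reduces exactly to the suitability requirement on $\rho_{12}$.

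For $n-1=2m\equiv 0\pmod{8}$ one has $m\equiv 0\pmod{4}$, so $(-1)^{m+1}=-1$. Since $\cC_{0,n-1}^{\circ}\cong\bR$ is commutative, all commutators in (\ref{sistemone}) vanish and anticommutators become $2\cdot\,$; the system therefore collapses to the three scalar equations $h_{2}^{\i}h_{4}^{\i}=0$, $(h_{1}^{\i})^{2}=(h_{2}^{\i})^{2}$, $h_{1}h_{4}^{\i}=0$. Together with the non-zero charge hypothesis $h_{1}\neq 0$, these force $h_{4}^{\i}=0$ and $h_{2}^{\i}=\pm h_{1}$ for every $\i$; imposing that $(-1)^{\i+1}h_{2}^{\i}$ be $\i$-independent (as required by Proposition~\ref{schurm1}) fixes a single global sign $\epsilon\in\{+1,-1\}$ with $h_{2}^{\i}=\epsilon(-1)^{\i+1}h_{1}$, and consequently $h_{2}=\epsilon h_{1}$. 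The scalars $h_{3}^{\i}$ are unconstrained and assemble into an arbitrary $h_{3}\in\Hom_{\bR}(E,\bR)$ by the linear extension $e_{\i}\mapsto h_{3}^{\i}$. Substituting into the formulas of Proposition~\ref{schurm1} one then recovers exactly the shapes of $\rho(\p)$, $\rho_{11}(e)$ and $\rho_{22}(e)$ claimed in the theorem, with the $\pm$ sign corresponding to $\epsilon$.

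It remains to treat the extra condition (\ref{abelian}). Block-expanding $[\rho(e),\rho(f)]=0$ yields three requirements: $[\rho_{11}(e),\rho_{11}(f)]=0$, $[\rho_{22}(e),\rho_{22}(f)]=0$, and a cross equation which is precisely the $(\rho_{11},\rho_{22})$-suitability of $\rho_{12}$ in the sense of Definition~\ref{suitable}. The main step of the argument, which I expect to be the only substantive computation, is to verify that the two diagonal commutators vanish \emph{automatically} for the $\rho_{11},\rho_{22}$ just obtained. Writing $P_{\pm}=\frac{1}{2}(\Id\pm E)$, Lemma~\ref{VOLUME} gives $\omega_{0,n-1}^{2}=E^{2}=1$, whence $P_{+}P_{-}=0$; moreover, since $n-1$ is even, $E=\omega_{0,n-1}$ lies in the twisted centre of $\Cl_{0,n-1}$ and therefore anticommutes with each $e_{\i}$, so $e_{\i}P_{\pm}=P_{\mp}e_{\i}$. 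Re-expressing $2\rho_{22}(e_{\i})=h_{3}^{\i}\Id+2h_{1}e_{\i}P_{\epsilon}$ and $2\rho_{11}(e_{\i})=h_{3}^{\i}\Id-2h_{1}e_{\i}P_{-\epsilon}$, the only quadratic term in the product $\rho_{22}(e_{\i})\rho_{22}(e_{j})$ picks up the factor $P_{\epsilon}e_{j}P_{\epsilon}=e_{j}P_{-\epsilon}P_{\epsilon}=0$ and so disappears; the surviving terms are manifestly symmetric in $\i,j$, and the identical computation handles $\rho_{11}$. Hence the only residual constraint from (\ref{abelian}) is suitability of $\rho_{12}$, completing the characterisation.
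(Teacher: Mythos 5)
Your proof is correct and follows essentially the same route as the paper: specialize Proposition~\ref{schurm1} to $n-1\equiv 0\pmod 8$, reduce the quadratic system (\ref{sistemone}) over $\cC_{0,n-1}^{\circ}\cong\bR$ to force $h_{4}^{\i}=0$ and $h_{2}=\pm h_{1}$, then identify the residue of (\ref{abelian}) with the suitability of $\rho_{12}$. Your one genuine addition is the explicit check, via $P_{\epsilon}e_{j}=e_{j}P_{-\epsilon}$ and $P_{-\epsilon}P_{\epsilon}=0$, that the diagonal commutators $[\rho_{11}(e),\rho_{11}(f)]$ and $[\rho_{22}(e),\rho_{22}(f)]$ vanish automatically; the paper's sentence ``the extra condition is equivalent to suitability of $\rho_{12}$'' tacitly relies on this and you have justified it.
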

\begin{pf}
System (\ref{sistemone}) reduces to  
$$\left\{\begin{matrix} h_{1}^{2}=h_{2}^{2}\\ \left\{h_{1},h_{4}^{\i}\right\}=\left\{h_{2},h_{4}^{\i}\right\}=0 \end{matrix}\right.$$
where $\cC_{0,n-1}^{\circ}\cong\bR$. The extra condition (\ref{abelian}) is equivalent to suitability of $\rho_{12}$. Proposition \ref{schurm1} then implies the result of the theorem.
\end{pf}

\begin{theorem}
\label{3p}
\label{mic}
If $n-1\equiv 6 \pc (mod\pc 8)$, every non-zero charge representation of the Heisenberg algebra is given by
$$
\rho(\mathrm{p})=\sqrt 2\begin{pmatrix} 0 & h_{1}\Id+h_{2}EI \\ 0 & 0 \end{pmatrix}
$$
$$
\rho(e)=\frac{1}{2}\begin{pmatrix} h_{3}(e)\Id-h_{1}e+& 2\rho_{12}(e) \\ +h_{2}eEI+h_{4}(e)EI  & \\ & h_{3}(e)\Id+h_{1}e+ & \\ 0 & +h_{2}eEI-h_{4}(e)EI \end{pmatrix}
$$
where either $h_{2}\in\bH$ satisfies $\Im h_{2}\neq 0$ and
$$\left\{\begin{matrix} h_{3}\in\Hom_{\bR}(E,\bH)\quad|\quad\dim\CoKer(\Im h_{3})\leq 1\quad{\it or}\quad \Im h_{3}(E)=(\Im h_{2})^{\bot}\\ h_{1}(h_{2},\pm)=\pm(|\Im h_{2}|- \Re h_{2}\frac{\Im h_{2}}{|\Im h_{2}|})\phantom{c};\phantom{c} h_{4}(h_{2},h_{3},\pm)(e)=\pm\frac{\Im h_{2}\times\Im h_{3}(e)}{|\Im h_{2}|} \end{matrix}\right.$$
or $h_{1}\in\bH$ satisfies $\Re h_{1}=0$, $\Im h_{1}\neq 0$ and
$$\left\{\begin{matrix} h_{3}\in\Hom_{\bR}(E,\bH)\quad|\quad\dim\CoKer(\Im h_{3})\leq 1\quad{\it or}\quad \Im h_{3}(E)=(\Im h_{1})^{\bot}\\ 
h_{2}(h_{1},\pm)=\pm|\Im h_{1}|\pc
\phantom{-\Re h_{1}\frac{\Im h_{1}}{|\Im h_{1}|})}
\phantom{c};\phantom{c}h_{4}(h_{1},h_{3},\pm)(e)=\mp\frac{\Im h_{1}\times\Im h_{3}(e)}{|\Im h_{1}|}\end{matrix}\right.$$
\end{theorem}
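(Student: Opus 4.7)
The plan is to specialise Proposition \ref{schurm1} and the theorem immediately following Proposition \ref{schu3} to $n-1\equiv 6\pc(\mathrm{mod}\pc 8)$, and then impose the commutativity constraint (\ref{abelian}). Here $\cC_{0,n-1}^{\circ}\cong\bH$ by the table of Subsection \ref{hhyy}, the admissible element $EI\in\cC_{0,n-1}$ is proportional to the volume form $\omega_{0,n-1}$, and $m=(n-1)/2$ is odd, so $(-1)^{m+1}=+1$ in the system (\ref{sistemone}); moreover by Lemma \ref{VOLUME} one has $\omega_{0,n-1}^{2}=-1$ and $\omega_{0,n-1}e_{i}=-e_{i}\omega_{0,n-1}$. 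Proposition \ref{schurm1} provides the matrix form of $\rho_{11}(e)$, $\rho_{22}(e)$ and $\rho(\mrp)$ in terms of quaternionic constants $h_{1},h_{2}\in\bH$ and linear maps $h_{3},h_{4}\in\Hom_{\bR}(E,\bH)$, after absorbing the signs $(-1)^{\i+1}$ into $h_{2}$ and $h_{4}$; the non-zero charge hypothesis reads $(h_{1},h_{2})\neq (0,0)$. Substituting into (\ref{sistemone}) produces the four quaternionic equations $h_{1}^{2}+h_{2}^{2}=0$, $[h_{1},h_{2}]=0$, $[h_{1},h_{3}(e)]+\{h_{2},h_{4}(e)\}=0$ and $\{h_{1},h_{4}(e)\}+[h_{3}(e),h_{2}]=0$, to hold for every $e\in E$.

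The first two equations constrain only $(h_{1},h_{2})$. Since $\bH$ is a division algebra, any pair of commuting quaternions lies in a common commutative $\bR$-subalgebra, which is either $\bR$ (in which case $h_{1}^{2}+h_{2}^{2}=0$ forces $h_{1}=h_{2}=0$, against non-zero charge) or a copy $\bR[n]$ of $\bC$ with $n$ a purely imaginary unit, $n^{2}=-1$. Writing $h_{k}=a_{k}+b_{k}n$ and separating real and $n$-components in $h_{1}^{2}+h_{2}^{2}=0$ yields two real scalar equations; a short case analysis according to whether $\Im h_{2}$ vanishes recovers the two alternatives of the statement. Namely, either $\Im h_{2}\neq 0$, in which case $n=\Im h_{2}/|\Im h_{2}|$ and $h_{1}=\pm(|\Im h_{2}|-\Re h_{2}\cdot n)$, or $h_{2}\in\bR$, forcing $\Re h_{1}=0$, $\Im h_{1}\neq 0$ and $h_{2}=\pm|\Im h_{1}|$. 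With $h_{1},h_{2}$ fixed, the last two equations of (\ref{sistemone}) are linear in $h_{4}(e)$, and the quaternionic identities $[p,q]=2\,p\times q$ and $\{p,q\}=-2\,p\cdot q$ valid for pure imaginary $p,q$ (identifying the pure imaginary quaternions with $\bR^{3}$ with its Euclidean structure) force $h_{4}(e)$ to be pure imaginary and orthogonal to $\Im h_{2}$ (resp.\ $\Im h_{1}$), and then determine $h_{4}(e)$ uniquely, up to a global sign, as the cross-product formula of the theorem.

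It remains to impose the condition (\ref{abelian}), which by the remark after Definition \ref{suitable} is equivalent to $[\rho_{22}(E),\rho_{22}(E)]=0$ together with $(\rho_{11},\rho_{22})$-suitability of $\rho_{12}$, an independent parameter. Expanding $[\rho_{22}(e_{\i}),\rho_{22}(e_{j})]$ and collecting by Clifford monomial, the coefficients on $e_{\i}e_{j}$ and $e_{\i}e_{j}\omega_{0,n-1}$ are killed by $h_{1}^{2}+h_{2}^{2}=0$ and $[h_{1},h_{2}]=0$, those on $e_{\i}$, $e_{j}$, $e_{\i}\omega_{0,n-1}$, $e_{j}\omega_{0,n-1}$ by the remaining two equations of (\ref{sistemone}), and the genuinely new constraints arise from the coefficients on $\Id$ and $\omega_{0,n-1}$, reading $[h_{3}(e),h_{3}(f)]=[h_{4}(e),h_{4}(f)]$ together with $[h_{3}(e),h_{4}(f)]+[h_{4}(e),h_{3}(f)]=0$. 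After substituting the explicit cross-product formula for $h_{4}$ and simplifying via the BAC--CAB identity $(u\times X)\times(u\times Y)=(u\cdot(X\times Y))\,u$ with $u=\Im h_{2}/|\Im h_{2}|$, $X=\Im h_{3}(e)$, $Y=\Im h_{3}(f)$, the first condition reduces to the requirement that $\Im h_{3}(e)\times\Im h_{3}(f)$ be parallel to $\Im h_{2}$ for all $e,f\in E$, which is equivalent to the geometric alternative on $\Im h_{3}(E)$ stated in the theorem, and the second condition is then automatic. The principal technical obstacle is this last step: carefully tracking the sixteen cross-terms of $[\rho_{22}(e),\rho_{22}(f)]$ with their sign contributions from $\omega_{0,n-1}^{2}=-1$ and $\omega_{0,n-1}e_{i}=-e_{i}\omega_{0,n-1}$, and then translating the resulting algebraic identities in $\bH$ into the stated geometric condition on the image of $\Im h_{3}$.
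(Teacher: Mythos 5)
Your proof is correct and follows essentially the same approach as the paper: specialize Proposition \ref{schurm1} and the system (\ref{sistemone}) to $n-1\equiv 6\ (\mathrm{mod}\ 8)$, reduce to a quaternionic system (the paper packages this as Lemma \ref{quat2}), impose (\ref{abelian}) to extract suitability of $\rho_{12}$ together with equations (\ref{sono}), and finally translate (\ref{sono}) into a geometric condition on $\Im h_{3}$. You differ in two minor respects. First, you re-derive the conclusion of Lemma \ref{quat2} inline from the structure of commutative subalgebras of $\bH$ rather than from the vector-calculus identities of Lemma \ref{quat3}; this is a somewhat cleaner route to the same case split $(\Im h_{2}\neq 0)$ versus $(\Re h_{1}=0,\ \Im h_{1}\neq 0)$. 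Second, you make explicit the final step that the paper leaves to the reader, including the correct (but not obvious) observation that the second equation of (\ref{sono}) becomes automatic once the first holds and $h_{4}$ has the cross-product form: either $\Im h_{3}$ has rank at most one, which trivializes both equations, or its image is a $2$-plane whose normal is forced by the first equation to be $\bR\,\Im h_{2}$, in which case $\Im h_{3}(e)\cdot\Im h_{2}=0$ for all $e$ and the second equation vanishes term by term. One caution: the condition you actually derive is $\mathrm{rank}(\Im h_{3})\le 1$ or $\Im h_{3}(E)=(\Im h_{2})^{\perp}$, so you should reconcile this carefully with the theorem's phrasing $\dim\CoKer(\Im h_{3})\le 1$ (in a $3$-dimensional codomain, $\dim\CoKer\le 1$ reads as rank $\ge 2$, which is not what the computation gives; most likely the theorem intends the complementary inequality).
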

\begin{pf}
The system (\ref{sistemone}) is solved in Lemma \ref{quat2}. The condition of non-zero charge is given by $h_{1}^{\i}\neq 0$ and $h_{2}^{\i}\neq 0$ (see Proposition \ref{schurm1}). Suppose that 
$$
h_{1}=h_{1}^{\i}=\pm(|\Im h_{2}|-\Re h_{2}\frac{\Im h_{2}}{|\Im h_{2}|})
$$
where $\pm\in\left\{+1,-1\right\}$. Lemma \ref{quat2} implies that
$$
(-1)^{\i+1}h_{4}^{\i}=\pm(-1)^{\i+1}(-1)^{\i+1}\frac{\Im h_{2}\times \Im h_{3}^{\i}}{|\Im h_{2}|}=
\pm\frac{\Im h_{2}\times \Im h_{3}^{\i}}{|\Im h_{2}|}\quad.
$$
Suppose that
$$
(-1)^{\i+1}h_{2}=h_{2}^{\i}=\pm_{\i}|\Im h_{1}|
$$
where $\pm_{\i}\in\left\{+1,-1\right\}$ depends on $1\leq \i\leq n-1$, while $(-1)^{\i+1}\pm_{\i}:=\pm$ does not. Lemma \ref{quat2} implies that
$$
(-1)^{\i}h_{4}^{\i}=\pm_{\i}(-1)^{\i+1}\frac{\Im h_{1}\times\Im h_{3}^{\i}}{|\Im h_{1}|}=\pm\frac{\Im h_{1}\times\Im h_{3}^{\i}}{|\Im h_{1}|}\quad.
$$
The extra condition (\ref{abelian}) implies suitability of $\rho_{12}$ and
\be
\label{sono}
[h_{3}(e),h_{3}(f)]=[h_{4}(e),h_{4}(f)]\qquad,\qquad [h_{3}(e),h_{4}(f)]=[h_{3}(f),h_{4}(e)]
\ee
Equations (\ref{sono}) imply the stated conditions on the map $\Im h_{3}\in\Hom_{\bR}(E,\Im\bH)$ (details left to the reader). Proposition \ref{schurm1} then implies the result of the theorem.
\end{pf}


\begin{theorem}
\label{2p}
If $n-1\equiv 4 \pc (mod\pc 8)$, every non-zero charge representation of the Heisenberg algebra is given by
$$
\rho(\mathrm{p})=\sqrt 2\begin{pmatrix} 0 & h_{1}(\Id\pm E) \\ 0 & 0 \end{pmatrix}
$$
$$
\rho(e)=\frac{1}{2}\begin{pmatrix} h_{3}(e)\Id-h_{1}e+& 2\rho_{12}(e) \\ \pm h_{1}eE-h_{4}(e)E  & \\ & h_{3}(e)\Id+h_{1}e+ & \\ 0 & \pm h_{1}eE+h_{4}(e)E \end{pmatrix}
$$
where either $h_{1}\in\bH$ satisfies $\Re h_{1}\neq 0$ and
$$\left\{\begin{matrix} h_{3}\in\Hom_{\bR}(E,\bH)\quad|\quad \dim\CoKer(\Im h_{3})\leq 1\\ h_{4}(h_{1},\pm,h_{3})(e)=\mp\frac{\Im h_{1}\times\Im h_{3}(e)}{\Re h_{1}} \end{matrix}\right.$$
or $h_{1}\in\bH$ satisfies $\Re h_{1}=0$, $\Im h_{1}\neq 0$ and
$$\left\{\begin{matrix} h_{3}=(h_{3}^{\bR},q_{3}^{\bC})\in\Hom_{\bR}(E,\bC)\\ h_{4}\in\Hom_{\bR}(E,\bR)\quad|\quad\Ker(h_{4})=\Ker(h_{3}^{\bC})\quad{\it if}\quad h_{3}^{\bC}\neq 0\quad{\it and}\quad h_{4}\neq 0
\end{matrix}\right.$$
In the second case, the codomains of $h_{4}$ and $h_{3}$ are respectively identified with a 1-dimensional imaginary subspace of $(\Im h_{1})^{\bot}$ and with
$$\bC\cong\bR\oplus\bR\frac{\Im h_{1}}{|\Im h_{1}|}$$
\end{theorem}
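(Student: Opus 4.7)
\begin{pf}
The plan mirrors the strategy used for Theorem \ref{mic}. First, I specialize the quadratic system (\ref{sistemone}) to the regime $n - 1 = 2m$ with $m \equiv 2 \pmod 4$, so $(-1)^{m+1} = -1$ and the system reads
\[
(h_1^\i)^2 = (h_2^\i)^2, \quad [h_1^\i, h_2^\i] = 0, \quad [h_1^\i, h_3^\i] = \{h_2^\i, h_4^\i\}, \quad \{h_1^\i, h_4^\i\} = [h_2^\i, h_3^\i]
\]
in the even Schur algebra $\cC_{0,n-1}^\circ \cong \bH$. Two commuting quaternions generate a commutative subfield $\cong \bC$, which has no zero divisors, so the first two equations force $h_2 = \pm h_1$ (absorbing the $(-1)^{\i+1}$ twist of Proposition \ref{schurm1}); non-zero charge then translates to $h_1 \neq 0$. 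After substitution, the remaining two equations collapse to the single identity $\{h_1, h_4(e)\} = \mp\,[h_1, h_3(e)]$, once the $(-1)^{\i}$ factors are absorbed into the linear maps $h_3, h_4 \in \Hom_\bR(E, \bH)$.

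Next I decompose the quaternion $h_1 = a + v$ into its scalar $a = \Re h_1$ and imaginary $v = \Im h_1$ parts, and read off separately the scalar and imaginary components of the identity above:
\[
a\, \Re h_4(e) = v \cdot \Im h_4(e), \qquad a\, \Im h_4(e) + \Re h_4(e)\, v = \mp\, v \times \Im h_3(e).
\]
In Case 1 ($a \neq 0$), these relations force $\Re h_4(e) = 0$ and $\Im h_4(e) = \mp (v \times \Im h_3(e))/a$, yielding the formula of the first branch. In Case 2 ($a = 0$, $v \neq 0$), the left-hand side of the imaginary equation is parallel to $v$ while the right-hand side is perpendicular to it, so both must vanish; this gives $\Re h_4(e) = 0$, $\Im h_4(e) \in v^\perp$, and $\Im h_3(e) \in \bR v$. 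Consequently, $h_3$ takes values in $\bR \oplus \bR v/|v| \cong \bC$ and $h_4$ in $v^\perp \cap \Im\bH$.

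Finally, I impose the extra condition (\ref{abelian}) on the block-triangular $\rho$. Its $(1,2)$-block is equivalent to the suitability of $\rho_{12}$ in the sense of Definition \ref{suitable}, while its diagonal blocks, expanded in the Clifford basis $\{1, e_\i, e_j, e_\i e_j, E, e_\i E, e_j E, e_\i e_j E\}$ using $E^2 = 1$ and $e_\i E = -E e_\i$ (which hold for $n-1 \equiv 4 \pmod 8$), reduce to the two residual identities
\[
[h_3(e), h_3(f)] + [h_4(e), h_4(f)] = 0, \qquad [h_3(e), h_4(f)] = [h_3(f), h_4(e)].
\]
Substituting the explicit form of $h_4$ from each case and applying the vector identity $(v \times x) \times (v \times y) = (v \cdot (x \times y))\, v$ in $\Im\bH$ translates these into the cokernel condition on $\Im h_3$ in Case 1 and, in Case 2, into the proportionality $h_3^\bC(e)\, h_4(f) = h_3^\bC(f)\, h_4(e)$; the latter forces $\Ker h_4 = \Ker h_3^\bC$ when both maps are non-zero and restricts the image of $h_4$ to a single line in $v^\perp$. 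The main obstacle is the careful bookkeeping of signs and the interplay between Schur-algebra and Clifford-algebra elements in the diagonal-block expansion, particularly in Case 2, where the $\bC$-valued map $h_3$ and the $v^\perp$-valued map $h_4$ live in perpendicular subspaces of $\bH$ and must be tracked precisely to recover the stated kernel identification and one-dimensional image.
\end{pf}
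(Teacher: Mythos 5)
Your proof follows the same route as the paper's, which simply cites Lemma \ref{quat4} and says ``proceed similarly to the proof of Theorem \ref{mic}'': specialize the quadratic system (\ref{sistemone}) to $(-1)^{m+1}=-1$, solve it in the quaternionic even Schur algebra, absorb the $(-1)^{\i}$-factors into $h_3,h_4$, impose the extra condition (\ref{abelian}), and assemble via Proposition \ref{schurm1}. What you do differently is worth noting: rather than citing Lemma \ref{quat4}, you re-derive $h_2=\pm h_1$ from the observation that two commuting quaternions lie in a commutative subfield with no zero divisors, so $h_1^2=h_2^2$ factors as $(h_1-h_2)(h_1+h_2)=0$ — cleaner than the component-by-component computation in Lemma \ref{quat4}; and you write out explicitly the residual identities that the paper only indicates by reference to Theorem \ref{mic}'s proof. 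Your Case 2 analysis, using the vector identity $(v\times x)\times(v\times y)=(v\cdot(x\times y))v$ to obtain $h_3^{\bC}(e)h_4(f)=h_3^{\bC}(f)h_4(e)$ and hence $\Ker h_4=\Ker h_3^{\bC}$ with one-dimensional image of $h_4$, matches the theorem exactly. One caveat: with your residual identity $[h_3(e),h_3(f)]+[h_4(e),h_4(f)]=0$, substituting $\Im h_4=\mp(\Im h_1\times\Im h_3)/\Re h_1$ in Case 1 and applying the same vector identity forces $\Im h_3(e)\times\Im h_3(f)\equiv 0$, i.e.\ $\dim\Im h_3(E)\le 1$, equivalently $\dim\CoKer(\Im h_3)\ge 2$ — the \emph{opposite} inequality to the one printed in the theorem. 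This strongly suggests a typo in the theorem statement (note that in Theorem \ref{3p} the alternative ``$\Im h_3(E)=(\Im h_2)^\perp$'' is redundant under the literal $\le 1$ but meaningful under $\ge 2$), and your derivation is consistent with that corrected reading; it is not a gap in your argument.
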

\begin{pf}
The system (\ref{sistemone}) is solved in Lemma \ref{quat4}. We proceed similarly to the proof of Theorem \ref{mic}. Proposition \ref{schurm1} then implies the result of the theorem.
\end{pf}
\begin{theorem}
\label{3d}
If $n-1\equiv 7 \pc (mod\pc 8)$, every non-zero charge representation of the Heisenberg algebra is given by
$$
\rho(\mathrm{p})=\sqrt 2\begin{pmatrix} 0 & \pm|h_{4}|\Id-h_{4}J \\ 0 & 0 \end{pmatrix}
$$
$$
\rho(e)=\frac{1}{2}\begin{pmatrix} h_{3}(e)\Id\mp|h_{4}|e+h_{4}J\circ e & 2\rho_{12}(e) \\ 0 & h_{3}(e)\Id\pm|h_{4}|e+h_{4}J\circ e \end{pmatrix}
$$
where $h_{4}\in\bC$ satisfies $h_{4}\neq 0$ and
$
h_{3}\in\Hom_{\bR}(E,\bR)
$.
\end{theorem}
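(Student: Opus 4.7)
\begin{pf}
The plan is to specialize the general analysis of Proposition \ref{schu3} and Theorem \ref{notte} to $n-1\equiv 7\pmod 8$, where $\cC_{0,n-1}\cong\bR(2)$ with $\cC_{0,n-1}^{\circ}\cong\bC=\bR\oplus\bR I$ and $J\in\cC_{0,n-1}$ is a paracomplex structure ($J^{2}=+\Id$) satisfying $\{I,J\}=0$ (so that $cJ=J\bar c$ and $Jc=\bar c J$ for every $c\in\bC$), and to then extract the constraints imposed by the extra Abelianness condition (\ref{abelian}).

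First I would apply Proposition \ref{schu3}: any pair $(\rho_{11}(e_{\i}),\rho_{22}(e_{\i}))$ solving (\ref{laprima}) and (\ref{laseconda}) is given by $h_{1},h_{2}^{\i},h_{3}^{\i},h_{4}\in\cC^{\circ}_{0,n-1}$ with $\mathrm{p}_{12}=\sqrt 2(h_{1}\Id-h_{4}J)$, and $h_{1},h_{4}$ independent of $\i$. Theorem \ref{notte} reduces (\ref{laterza}) to the system (\ref{sistemonedisp}). In the present case $J^{2}=+1$, so the second equation reads $(h_{1})^{2}=|h_{4}|^{2}\in\bR_{\geq 0}$; writing $h_{1}=a+Ib$ one finds $(h_{1})^{2}=a^{2}-b^{2}+2abI$, forcing $b=0$ and $a=\pm|h_{4}|$, i.e.\ $h_{1}\in\bR$ with $h_{1}=\pm|h_{4}|$. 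The fourth equation $h_{1}h_{4}=h_{4}\overline{h_{1}}$ is now automatic, and the charge $\mathrm{p}_{12}=\sqrt 2(\pm|h_{4}|\Id-h_{4}J)$ is non-zero exactly when $h_{4}\neq 0$. Using $\Re(h_{1})=\pm|h_{4}|\neq 0$, the third equation uniquely determines $h_{2}^{\i}=\mp h_{4}\Im(h_{3}^{\i})/|h_{4}|$, and the first equation $h_{2}^{\i}\overline{h_{4}}\in I\bR$ is then automatically satisfied.

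The core remaining task is to impose the Abelianness condition (\ref{abelian}), $[\rho(e_{i}),\rho(e_{j})]=0$ for $i\neq j$, on the resulting candidate. Because $h_{3}^{\i}\in\bC$ no longer lies in the centralizer of $J$ (since $JI=-IJ$), the commutator of two upper-left blocks produces a term of type $K e_{i}$ and $Ke_{j}$ with $K:=IJ$, coming from $[h_{3}^{i},h_{4}Je_{j}]=2b^{i}h_{4}Ke_{j}$ and the symmetric counterpart, where $b^{\i}:=\Im(h_{3}^{\i})/I\in\bR$. Since $Ke_{i}$ and $Ke_{j}$ are linearly independent in $\End_{\bR}(S_{0,n-1})$ and $h_{4}\neq 0$, their coefficients must vanish, forcing $b^{\i}=0$ for every $\i$; hence $h_{3}\in\Hom_{\bR}(E,\bR)$ and consequently $h_{2}^{\i}=0$. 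Once this is established, all the remaining commutators reduce to scalar multiples of $e_{i}e_{j}$ with coefficient $h_{1}^{2}-|h_{4}|^{2}=0$ (the computation for $\rho_{22}$ is identical). The hardest part is precisely this commutator bookkeeping in a non-commutative setting where $I,J$ anticommute; once it is carried out, substituting $h_{2}=0$ and $h_{3}\in\Hom_{\bR}(E,\bR)$ into the formulas of Proposition \ref{schu3} yields the stated expressions.

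Finally, the off-diagonal block of $[\rho(e_{i}),\rho(e_{j})]=0$ is equivalent to the symmetry (\ref{simmetria}) of the map $e\otimes f\mapsto \rho_{11}(e)\rho_{12}(f)-\rho_{12}(f)\rho_{22}(e)$, which is precisely the defining condition of $\rho_{12}$ being $(\rho_{11},\rho_{22})$-suitable. Conversely, any data $(h_{1},h_{4},h_{3},\rho_{12})$ as in the statement produces matrices satisfying (\ref{laprima})--(\ref{laterza}) and (\ref{abelian}) by a direct check, proving the converse implication and completing the description.
\end{pf}
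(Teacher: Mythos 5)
Your reduction of the system (\ref{sistemonedisp}) — $h_{1}$ real with $h_{1}=\pm|h_{4}|$, non-zero charge equivalent to $h_{4}\neq 0$, $h_{2}^{i}$ determined by the third equation, the first equation then automatic — matches the paper, as does identifying the off-diagonal block of $[\rho(e_{i}),\rho(e_{j})]=0$ with suitability of $\rho_{12}$. The gap is exactly in the step you call the core one, the commutator bookkeeping that is supposed to force $\Im h_{3}^{i}=0$. By Proposition \ref{schu3}, the upper-left block is $2\rho_{11}(e_{i})=h_{3}^{i}\Id-h_{1}e_{i}-h_{2}^{i}J+h_{4}Je_{i}$, and the term $-h_{2}^{i}J$ is nonzero precisely when $\Im h_{3}^{i}\neq 0$ (you fixed $h_{2}^{i}=\mp h_{4}\Im h_{3}^{i}/|h_{4}|$ one paragraph earlier). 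Its commutator with the term $-h_{1}e_{j}$ of $2\rho_{11}(e_{j})$ equals $2h_{1}h_{2}^{i}Je_{j}=-2b^{i}h_{4}Ke_{j}$, and this cancels identically the contribution $[h_{3}^{i}\Id,h_{4}Je_{j}]=2b^{i}h_{4}Ke_{j}$ on which your argument rests — the cancellation being nothing but the third equation $h_{2}^{i}\Re h_{1}+h_{4}\Im h_{3}^{i}=0$. The same happens for the $Ke_{i}$ terms, the pure-$J$ terms, and the $e_{i}e_{j}$ terms (the latter by $h_{1}^{2}=|h_{4}|^{2}$), and likewise in the lower-right block; so the diagonal blocks of $[\rho(e_{i}),\rho(e_{j})]$ vanish for arbitrary $h_{3}^{i}\in\bC$ once (\ref{sistemonedisp}) holds, and no condition $b^{i}=0$ is produced. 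Your derivation of $h_{3}\in\Hom_{\bR}(E,\bR)$ therefore does not go through.

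This is precisely the point the paper leaves ``to the reader'', and the missing details cannot come from the diagonal-block commutators you invoke: taking $\rho_{12}=0$ (trivially suitable) and $\Im h_{3}^{i}\neq 0$ with $h_{2}^{i}$ as dictated by (\ref{sistemonedisp}), the matrices of Proposition \ref{schu3} satisfy (\ref{laprima}), (\ref{laseconda}), (\ref{laterza}) and (\ref{abelian}) as far as the above computation shows, so any correct proof of $\Im h_{3}=0$ must use some further input (or the constraint itself deserves re-examination); in either case the step is not supplied by your argument, and the remainder of the proposal (substitution into Proposition \ref{schu3} and the converse check) depends on it.
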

\begin{proof}
The second and fourth equations of system (\ref{sistemonedisp}) imply $$h_{1}^{\i}=\pm_{\i}|h_{4}^{\i}|$$
where $\pm_{\i}\in\left\{+1,-1\right\}$ depends on $1\leq \i\leq n-1$. The condition of non-zero charge is given by $h_{4}^{\i}\neq 0$ (see Proposition \ref{schu3}). The first and third equations of (\ref{sistemonedisp}) are then reduced to
$$h_{2}^{\i}=-\pm_{\i} \frac{h_{4}^{\i}}{|h_{4}^{\i}|}\Im h_{3}^{\i}\qquad.$$   
The extra condition $[\rho(E),\rho(E)]=0$ implies suitability of $\rho_{12}$ and $\Im h_{3}^{\i}=0$ (details left to the reader). Proposition \ref{schu3} then implies the result of the theorem.
\end{proof}
\subsection{Summary}\hfill\newline\\
We give a brief summary of the results obtained in this section. We have studied all possible spin representations of the Heisenberg algebra $\gheis_{b}=E^{*}+E+\bR\p$. Zero charge representations appear in all dimensions and are described in a unified way (see Theorem \ref{0c}). They correspond bijectively to suitable pairs (see Definition \ref{suitable}). Zero charge representations are the only spin representations of the Heisenberg algebra when semi-spinors do not exist (see Theorem \ref{al}). When semi-spinors do exist, all non-zero charge spin representations have been described in terms of suitable maps (see Theorems \ref{0p}, \ref{3p}, \ref{2p}, \ref{3d}).
\subsection{Solution of some quadratic equations on \texorpdfstring{$\bH$}{H}}\hspace{0 cm}\newline\\
\label{QUAT}
In this subsection we give solutions of some quadratic equations with quaternionic coefficients, which have been used in the case $n-1\equiv 6,4$ (mod $8$) in Theorem \ref{3p} and \ref{2p}.
\begin{lemma}\cite{Z}
\label{quat3}
For every $h, h_{1}, h_{2}\in\bH$ the following formulae are true
$$
[h_{1},h_{2}]=2(\Im h_{1}\times\Im h_{2})
$$
$$
\left\{h_{1},h_{2}\right\}=2(\Re h_{1}\Im h_{2}+\Re h_{2}\Im h_{1}+\Re h_{1}\Re h_{2}-\Im h_{1}\cdot\Im h_{2})
$$
$$
h^{2}=|\Re h|^{2}-|\Im h|^{2}+2\Re h\Im h
$$
and $h_{1}, h_{2}\in\bH$ are conjugate if and only if
$
\Re h_{1}=\Re h_{2}$, $|\Im h_{1}|=|\Im h_{2}|
$.
\end{lemma}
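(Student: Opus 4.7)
The plan is to reduce every assertion to the standard product rule for pure imaginary quaternions, namely that for $u,v\in\Im\bH\cong\bR^{3}$ one has $uv=-u\cdot v+u\times v$, together with the obvious fact that real quaternions are central.

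First I would write $h_{i}=\Re h_{i}+\Im h_{i}$ and expand
\[
h_{1}h_{2}=\Re h_{1}\Re h_{2}+\Re h_{1}\Im h_{2}+\Re h_{2}\Im h_{1}+(\Im h_{1})(\Im h_{2}),
\]
and then substitute $(\Im h_{1})(\Im h_{2})=-\Im h_{1}\cdot\Im h_{2}+\Im h_{1}\times\Im h_{2}$. Swapping the roles of $h_{1},h_{2}$ gives $h_{2}h_{1}$ with the same expression except that the cross product changes sign (while the dot product does not). Adding and subtracting the two expressions yields the anticommutator formula
\[
\{h_{1},h_{2}\}=2\bigl(\Re h_{1}\Re h_{2}+\Re h_{1}\Im h_{2}+\Re h_{2}\Im h_{1}-\Im h_{1}\cdot\Im h_{2}\bigr)
\]
and the commutator formula $[h_{1},h_{2}]=2\,\Im h_{1}\times\Im h_{2}$. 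The formula for $h^{2}$ follows by specializing to $h_{1}=h_{2}=h$ and noticing that $\Im h\cdot\Im h=|\Im h|^{2}$ and $\Im h\times\Im h=0$, while $\Re h\,\Re h=|\Re h|^{2}$.

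For the last assertion, I would use the standard fact that two elements of $\bH$ are conjugate (in the sense of the inner automorphism action of $\bH^{\times}$) if and only if they share the same reduced trace $2\Re h$ and the same reduced norm $|\Re h|^{2}+|\Im h|^{2}$; equivalently, the same real part and the same modulus of the imaginary part. The ``only if'' direction is immediate since $\Re$ and $|\,\cdot\,|$ are conjugation invariants; for the ``if'' direction, if $\Re h_{1}=\Re h_{2}$ and $|\Im h_{1}|=|\Im h_{2}|$, any rotation $R\in\SO(3)$ sending $\Im h_{1}$ to $\Im h_{2}$ can be realized as conjugation by a unit quaternion via the double cover $\Sp(1)\to\SO(3)$, and this conjugation fixes the real part.

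No step is really an obstacle: the whole lemma is a bookkeeping exercise once the pure-imaginary product rule and the $\Sp(1)\to\SO(3)$ double cover are invoked. The only place that requires a moment of care is checking that no sign errors occur when passing from $h_{1}h_{2}$ to $h_{2}h_{1}$, which is handled uniformly by the antisymmetry of the cross product.
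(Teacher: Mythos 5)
Your proof is correct. Note, though, that the paper does not prove this lemma at all: it is stated with a citation to reference \cite{Z} (Zhang, \emph{Quaternions and Matrices of Quaternions}) and no argument is supplied. So there is no in-paper proof to compare against; your derivation is a self-contained supplement to what the author simply quotes. Your route --- expand $h_{1}h_{2}$ using the decomposition into real and imaginary parts, apply the identity $uv=-u\cdot v+u\times v$ for $u,v\in\Im\bH$, and then add/subtract to isolate the symmetric and antisymmetric parts --- is the standard elementary proof, and the specialization $h_{1}=h_{2}=h$ for the square is handled correctly. Your treatment of the conjugacy criterion via invariance of $\Re$ and $|\cdot|$ for the ``only if'' direction and the double cover $\Sp(1)\to\SO(3)$ for the ``if'' direction is also correct and is the usual argument. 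No gaps.
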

The trivial solutions of the following two lemmas correspond to zero charge spin representations of the Heisenberg algebra. This explains our terminology.
\begin{lemma}
\label{quat2}
There are three types of solutions of the following system
$$\left\{\begin{matrix} [h_{1},h_{3}]+\left\{h_{2},h_{4}\right\}=0\\
h_{1}^{2}+h_{2}^{2}=0\\ [h_{1},h_{2}]=0\\ 
\left\{h_{1},h_{4}\right\}+[h_{3},h_{2}]=0
\end{matrix}\right.$$
where $h_{i}\in\bH$ for $i=1,...,4$. They are given by 
\begin{itemize}
\item[1)] The zero charge solution $h_{1}=h_{2}=0$,\item[2)] The family $$h_{1}(h_{2},\pm)=\pm(|\Im h_{2}|- \Re(q_{2})\frac{\Im h_{2}}{|\Im h_{2}|})\quad,
\quad
h_{4}(h_{2},h_{3},\pm)=\pm\frac{\Im h_{2}\times\Im h_{3}}{|\Im h_{2}|}$$ when $\Im h_{2}\neq 0$,
\item[3)] The family $$h_{2}(h_{1},\pm)=\pm|\Im h_{1}|\phantom{cc}
\phantom{- \Re h_{1}\frac{\Im h_{1}}{|\Im h_{1}|})}
\quad,\quad\
h_{4}(h_{1},h_{3},\pm)=\mp\frac{\Im h_{1}\times\Im h_{3}}{|\Im h_{1}|}$$ when $\Im h_{1}\neq 0$ and $\Re h_{1}=0$. 
\end{itemize}
\end{lemma}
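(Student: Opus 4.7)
The approach is to translate each of the four quaternionic equations into real-and-imaginary-part data via Lemma~\ref{quat3}, and then carry out a case analysis on whether $\Im h_{1}$ and $\Im h_{2}$ vanish. First I would record the translations. The third equation $[h_1,h_2]=0$ becomes $\Im h_1\times \Im h_2=0$, so $\Im h_1$ and $\Im h_2$ must be $\bR$-proportional. Using $h^{2}=|\Re h|^{2}-|\Im h|^{2}+2\Re h\,\Im h$, the second equation $h_1^2+h_2^2=0$ splits into
$$
(\Re h_1)^2+(\Re h_2)^2=|\Im h_1|^2+|\Im h_2|^2,\qquad \Re h_1\,\Im h_1+\Re h_2\,\Im h_2=0.
$$

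Next I would case-split on the pair $(\Im h_1,\Im h_2)$. If both vanish the second equation collapses to $h_1^2+h_2^2=0$ for reals, yielding $h_1=h_2=0$ and producing solution~(1). If $\Im h_1=0$ and $\Im h_2\ne 0$, the imaginary constraint forces $\Re h_2=0$ and the real constraint gives $h_1=\pm|\Im h_2|$, which is the $\Re h_2=0$ specialization of family~(2). The symmetric subcase $\Im h_2=0$, $\Im h_1\ne 0$ forces $\Re h_1=0$ and $h_2=\pm|\Im h_1|$, which is family~(3). In the generic subcase $\Im h_1,\Im h_2\ne 0$, I write $\Im h_2=\lambda\Im h_1$ with $\lambda\in\bR^{*}$; the imaginary constraint then reads $\Re h_1=-\lambda\,\Re h_2$, and substitution into the real constraint gives $(1+\lambda^{2})(\Re h_2)^{2}=(1+\lambda^{2})|\Im h_1|^{2}$, hence $\Re h_2=\pm|\Im h_1|$. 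Reassembling $h_1=\Re h_1+\Im h_1$ and tracking $\mathrm{sgn}(\lambda)$ through $\Im h_2/|\Im h_2|=\mathrm{sgn}(\lambda)\,\Im h_1/|\Im h_1|$ reproduces exactly the quaternion $h_{1}(h_{2},\pm)=\pm(|\Im h_2|-\Re h_2\,\Im h_2/|\Im h_2|)$ of family~(2).

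With $h_1,h_2$ fixed, the first and fourth equations become $\bR$-linear in $h_4$. Polarizing via Lemma~\ref{quat3}, the bracket $[h_1,h_3]+\{h_2,h_4\}=0$ and $\{h_1,h_4\}+[h_3,h_2]=0$ decouple into equations for $\Re h_4$ and $\Im h_4$; using the parallelism $\Im h_1\parallel\Im h_2$ (from equation three) one checks that the two linear systems are compatible and that their unique solution is $h_4=\pm(\Im h_2\times\Im h_3)/|\Im h_2|$ in family~(2) and $h_4=\mp(\Im h_1\times\Im h_3)/|\Im h_1|$ in family~(3). Conversely, a direct substitution of the formulas from (1)--(3) back into the system shows they all satisfy the four equations, which proves exhaustiveness.

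The main obstacle I anticipate is the sign bookkeeping in the generic subcase: matching the $\pm$ coming from $\Re h_2=\pm|\Im h_1|$ and the $\mathrm{sgn}(\lambda)$ coming from $\Im h_2/|\Im h_2|$ against the single $\pm$ appearing in the stated formula, and then simultaneously verifying the compatibility of the two linear equations for $h_4$ (which rests essentially on $\Im h_2\times\Im h_1=0$ and the identity $\Re h_1=-\lambda\Re h_2$). Everything else is routine application of Lemma~\ref{quat3}.
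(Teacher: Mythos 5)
Your proposal is correct and follows essentially the same route as the paper's: translate the four quaternion equations into real/imaginary data via Lemma~\ref{quat3}, derive the parallelism of $\Im h_1,\Im h_2$ and the two scalar constraints from $h_1^2+h_2^2=0$, pin down $h_1$ (or $h_2$) by solving those, and then determine $h_4$ from the remaining two equations. The only cosmetic difference is that you organize the case split around the vanishing of $\Im h_1,\Im h_2$ while the paper splits on $\Re h_1,\Re h_2$, but the second equation ties these together so the analysis is the same.
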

\begin{pf}
Suppose $\Re h_{1}=\Re h_{2}=0$. The second equation of the system implies that $\Im h_{1}=\Im h_{2}=0$, and then 
$h_{1}=h_{2}=0$. If $\Re h_{1}\neq 0$ (the other case is analogous), the second and third equations of the system imply that $\Im h_{1}$ and $\Im h_{2}$ are linearly dependent. More precisely,
\be
\label{ant}
\Re h_{1}\Im h_{1}+\Re h_{2}\Im h_{2}=0
\ee
Using (\ref{ant}), the second equation of the system implies that
$$
(1+\frac{(\Re h_{2})^{2}}{(\Re h_{1})^{2}})|\Im h_{2}|^{2}=(\Re h_{2})^{2}+(\Re h_{1})^{2}\qquad.
$$
This is equivalent to $|\Im h_{2}|^{2}=(\Re h_{1})^{2}\neq 0$. Summarizing we get that
$$h_{1}=\pm(|\Im h_{2}|-\Re h_{2}\frac{\Im h_{2}}{|\Im h_{2}|})\phantom{c}.$$
Similarly, we get the case 
$$h_{2}=\pm(|\Im h_{1}|-\Re h_{1}\frac{\Im h_{1}}{|\Im h_{1}|})$$
when $(\Re h_{2})^{2}=|\Im h_{1}|^{2}\neq 0$. In the case $\Im h_{2}\neq 0$ (\textit{i.e.} $\Re h_{1}\neq 0$), the first and fourth equations of the system imply
\be
\label{stanco1}
\mp\Re h_{2}\frac{\Im h_{2}}{|\Im h_{2}|}\times\Im h_{3}+\Re h_{2}\Im h_{4}+\Re h_{4}\Im h_{2}=0
\ee
\be
\label{stanco2}
\Re h_{2}\Re h_{4}-\Im h_{2}\cdot\Im h_{4}=0
\ee
\be
\label{stanco3}
\Im h_{3}\times\Im h_{2}\pm|\Im h_{2}|\Im h_{4}\mp\Re h_{4}\Re h_{2}\frac{\Im h_{2}}{|\Im h_{2}|}=0
\ee
\be
\label{stanco4}
|\Im h_{2}|\Re h_{4}+\Re h_{2}\frac{\Im h_{2}}{|\Im h_{2}|}\cdot\Im h_{4}=0
\ee
It is easy to see that (\ref{stanco2}) and (\ref{stanco4}) are equivalent to
$
\Re h_{4}=\Im h_{2}\cdot\Im h_{4}=0
$.
Equations (\ref{stanco1}), (\ref{stanco3}) are reduced then to
$$
\mp\Re h_{2}\frac{\Im h_{2}}{|\Im h_{2}|}\times\Im h_{3}+\Re h_{2}\Im h_{4}=0
$$
$$
\Im h_{3}\times\Im h_{2}\pm|\Im h_{2}|\Im h_{4}=0
$$
which are equivalent to
$\Im h_{4}=\pm\frac{\Im h_{2}\times\Im h_{3}}{|\Im h_{2}|}$. The lemma is thus proved.
\end{pf}
\begin{lemma}
\label{quat4}
There are three types of solutions of the following system 
$$\left\{\begin{matrix} [h_{1},h_{3}]-\left\{h_{2},h_{4}\right\}=0\\
h_{1}^{2}=h_{2}^{2}\\ [h_{1},h_{2}]=0\\ 
\left\{h_{1},h_{4}\right\}+[h_{3},h_{2}]=0
\end{matrix}\right.$$
where $h_{i}\in\bH$ for $i=1,...,4$. They are given by
\begin{itemize}
\item[1)] The zero charge solution $h_{1}=h_{2}=0$,
\item[2)] The family $$h_{2}(h_{1},\pm)=\pm h_{1}\quad,\quad h_{4}(h_{1},h_{3},\pm)=\pm\frac{\Im h_{1}\times\Im h_{3}}{\Re h_{1}}
$$
when $\Re h_{1}\neq 0$,
\item[3)] The family $$h_{2}(h_{1},\pm)=\pm h_{1}\phantom{c},\phantom{c}\Re h_{4}=0\phantom{c},\phantom{c}\Im h_{4}\bot\Im h_{1}\phantom{c},\phantom{c}\Im h_{3}\in\bR\cdot\Im h_{1}
$$
when $\Im h_{1}\neq 0$ and $\Re h_{1}=0$.
\end{itemize}
\end{lemma}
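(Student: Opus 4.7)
The plan is to mirror the two-stage strategy used for Lemma \ref{quat2}: first use the equations $h_1^2 = h_2^2$ and $[h_1, h_2] = 0$ to pin down the pair $(h_1, h_2)$, and then use the remaining two equations to solve for $h_4$ in terms of $h_1$ and $h_3$. Throughout, Lemma \ref{quat3} is the working tool, converting products, commutators and anticommutators in $\bH$ into scalar, dot- and cross-product expressions in the real/imaginary decomposition.

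First I would observe that $[h_1, h_2] = 0$ forces $\Im h_1 \parallel \Im h_2$, while $h_1^2 = h_2^2$ splits into a real identity on $|\Re h|^2 - |\Im h|^2$ and an imaginary identity $\Re h_1\, \Im h_1 = \Re h_2\, \Im h_2$. A short case analysis on whether $\Re h_1$ and $\Im h_1$ vanish shows that, apart from the trivial solution $h_1 = h_2 = 0$, one is always forced into $h_2 = \pm h_1$. The slightly delicate subcase $\Re h_1, \Im h_1 \neq 0$ is handled by writing $\Im h_2 = \lambda \Im h_1$ and eliminating $\Re h_2$ from the real-part equation, which yields $\lambda^2 = 1$; the purely-real and purely-imaginary subcases for $h_1$ are handled analogously.

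Once $h_2 = \pm h_1$ is known, equations 1 and 4 are readily seen to coincide, reducing the remaining content of the system to the single constraint $\{h_1, h_4\} = \pm[h_1, h_3]$. Expanding via Lemma \ref{quat3} and separating real and imaginary parts produces the two real equations
$$\Re h_1\, \Re h_4 = \Im h_1 \cdot \Im h_4, \qquad \Re h_1\, \Im h_4 + \Re h_4\, \Im h_1 = \pm(\Im h_1 \times \Im h_3).$$
The key observation is that $\Im h_1 \times \Im h_3$ lies in $(\Im h_1)^{\perp}$, so projecting the vector equation onto $\Im h_1$ decouples $\Re h_4$ from the rest. When $\Re h_1 \neq 0$ this projection, combined with the scalar equation, forces $\Re h_4 = 0$, and $\Im h_4 = \pm(\Im h_1 \times \Im h_3)/\Re h_1$ then drops out, giving case~2. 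When $\Re h_1 = 0$ the scalar equation forces $\Im h_4 \perp \Im h_1$, and the projection of the vector equation onto $\Im h_1$ forces $\Re h_4 = 0$ while its projection onto $(\Im h_1)^{\perp}$ forces $\Im h_1 \times \Im h_3 = 0$, i.e. $\Im h_3 \in \bR \cdot \Im h_1$; this is exactly case~3.

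I do not expect any essential obstacle: the computation is purely algebraic and parallels Lemma \ref{quat2} very closely. The main bookkeeping hazard is sign management (the $\pm$ in $h_2 = \pm h_1$ propagates into the formula for $h_4$) together with a careful treatment of the degenerate subcases $\Im h_1 = 0$ or $\Im h_3 = 0$, so that the statement of the lemma captures all—and only—genuine solutions.
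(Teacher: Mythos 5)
Your proposal is correct, and the first stage (forcing $h_2=\pm h_1$ from $h_1^2=h_2^2$ and $[h_1,h_2]=0$) tracks the paper's argument closely. The second stage is where the routes genuinely diverge. The paper rewrites the surviving constraint $[h_1,h_3]=\pm\{h_1,h_4\}$ as a conjugation identity $h_1^{-1}q_1h_1=q_2$ with $q_1:=h_3+h_4$, $q_2:=h_3-h_4$, and invokes the criterion for two quaternions to be conjugate (equal real parts and equal norms of imaginary parts, from Lemma \ref{quat3}); this immediately yields $\Re h_4=0$ and $\Im h_4\perp\Im h_3$, after which a further expansion gives $\Im h_1\perp\Im h_4$ and $\Re h_1\,\Im h_4=\Im h_1\times\Im h_3$. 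You instead expand $\{h_1,h_4\}=\pm[h_1,h_3]$ directly via Lemma \ref{quat3}, split into the scalar equation $\Re h_1\Re h_4=\Im h_1\cdot\Im h_4$ and the vector equation $\Re h_1\Im h_4+\Re h_4\Im h_1=\pm\Im h_1\times\Im h_3$, and exploit $\Im h_1\times\Im h_3\perp\Im h_1$ to project onto $\Im h_1$ and $(\Im h_1)^\perp$, getting $\Re h_4\,|h_1|^2=0$ and the rest. Both are valid and arrive at the same families. The paper's conjugation trick is an elegant algebraic shortcut (it produces $\Re h_4=0$ and $\Im h_4\perp\Im h_3$ with no computation), but it treats only the $h_2=+h_1$ branch explicitly and relegates $h_2=-h_1$ to symmetry; your direct projection argument is more elementary, carries the $\pm$ sign visibly through every step, and makes the degenerate subcases ($\Im h_1=0$) easy to check in-line, as you already noted you would need to.
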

\begin{pf}
Suppose $\Re h_{1}\neq0$ (the other case $\Re h_{2}\neq 0$ is analogous). The second and third equations of the system imply that $\Im h_{1}$ and $\Im h_{2}$ are linearly dependent. More precisely,
\be
\label{ahah}
\Re h_{1}\Im h_{1}-\Re h_{2}\Im h_{2}=0
\ee
Using (\ref{ahah}), 
the second equation of the system implies that
$$
(1-\frac{(\Re h_{2})^{2}}{(\Re h_{1})^{2}})|\Im h_{2}|^{2}=(\Re h_{2})^{2}-(\Re h_{1})^{2}\qquad.
$$
Since the two sides have different signs, it follows that $(\Re h_{2})^{2}=(\Re h_{1})^{2}$. Equation (\ref{ahah}) finally implies that
\be
\label{finally} 
h_{1}=\pm h_{2}
\ee
Suppose $\Re h_{1}=\Re h_{2}=0$. The second and third equations of the system imply that
$\Im h_{1}$ and $\Im h_{2}$ are linearly dependent vectors that lie on the same $2$-dimensional sphere. Equation (\ref{finally}) then holds. The first and fourth equations of the system are given by 
\be
\label{nonso}
[h_{1},h_{3}]=+\left\{h_{1},h_{4}\right\}\qquad\text{if}\quad h_{1}=+h_{2}
\ee
\be
\label{nonso2}
[h_{1},h_{3}]=-\left\{h_{1},h_{4}\right\}\qquad\text{if}\quad h_{1}=-h_{2}
\ee
Here only the case (\ref{nonso}) is studied (the other case is analogous). Since $h_{1}\neq 0$, equation (\ref{nonso}) is re-written as
\be
\label{equivalent3}
h_{1}^{-1}q_{1}h_{1}=q_{2}
\ee
where $q_{1}:=h_{3}+h_{4}$ and $q_{2}:=h_{3}-h_{4}$ are two conjugate quaternions. Lemma \ref{quat3} implies that
$$
\Re h_{4}=0\qquad,\qquad\Im h_{4}\bot\Im h_{3}\phantom{c}
$$
and then equation (\ref{equivalent3}) reduces to
$
h_{1}(\Im h_{3}-\Im h_{4})=(\Im h_{3}+\Im h_{4})h_{1}
$, \textit{i.e.}
$$
\Im h_{1}\bot\Im h_{4}\qquad,\qquad\Re h_{1}\Im h_{4}=\Im h_{1}\times\Im h_{3}\phantom{c}.
$$
The solution of this equation depends on $\Re(h_{1})$ as follows
$$ \left\{\begin{matrix}\quad\pc\pc\pc\pc \Re h_{1}\neq 0 \qquad,\quad\quad\quad \Im h_{4}=+\frac{\Im h_{1}\times\Im h_{3}}{\Re h_{1}}\qquad\qquad\qquad\quad\bf{or}\\
\Re h_{1}=0\qquad,\qquad \Im h_{3}\in\bR\cdot\Im h_{1}\quad,\quad\Im h_{4}\bot\Im h_{1}\\ 
\end{matrix}\right.$$
The lemma is thus proved.
\end{pf}

\section{Superization of the Heisenberg algebra}
\setcounter{equation}{0}
This section gives some results about superizations of the Heisenberg algebra, both in the zero charge and in the non-zero charge case. 
In the first case we focus mainly on superizations such that the action of $E$ on $S$ is upper triangular and $[S,S]\subseteq\bR\p$. This is due to the fact that these conditions are satisfied for all zero charge superizations of the CW algebra with translational supersymmetry (see Lemma \ref{bilinear} and Proposition \ref{ne}). In the second case, we restrict ourselves to $\dim E=8$.\\
The following lemma describes the form of $E^{*}$-invariant symmetric bilinear forms on the spin module $S=S_{-}+S_{+}$. According to (\ref{dec}) any symmetric bilinear form $\Gamma\in\Bil_{\bR}(S)$ can be written as $\Gamma=\Gamma_{--}+\Gamma_{-+}+\Gamma_{++}$ where
\be
\label{r--}
\Gamma_{--}:S_{-}\vee S_{-}\rightarrow\bR
\ee
\be
\label{r-+}
\Gamma_{-+}:S_{-}\otimes S_{+}\rightarrow\bR
\ee
\be
\label{r++}
\Gamma_{++}:S_{+}\vee S_{+}\rightarrow\bR
\ee
are defined by restrictions. Recall that $S_{\mp}$ is linear isomorphic to $S_{0,n-1}$.
\begin{lemma}
\label{MM}
Let $\Gamma\in\Bil_{\bR}(S)$ be a symmetric bilinear form on the spin module $S$ satisfying
$$\Gamma(\left(
\begin{array}{c}
e\cdot Q_{+} \\
0 \\
\end{array}
\right),
\left(
\begin{array}{c}
\tilde{Q}_{-} \\
\tilde{Q}_{+} \\
\end{array}
\right))+
\Gamma(\left(
\begin{array}{c}
Q_{-} \\
Q_{+} \\
\end{array}
\right)
\left(
\begin{array}{c}
e\cdot \tilde{Q}_{+} \\
0 \\
\end{array}
\right))=0\qquad\qquad\forall e\in E 
\qquad.$$
Then $\Gamma_{--}=0$, $\Gamma_{-+}\in\Bil_{\bR}(S_{0,n-1})^{\tau\sigma=-1}$ and $\Gamma_{++}$ is an arbitrary symmetric bilinear form on $S_{0,n-1}$.
\end{lemma}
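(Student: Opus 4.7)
The plan is to substitute specific test pairs into the given identity to extract the three claims in turn.

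First, I would split everything using $S=S_{-}+S_{+}$. Every symmetric $\Gamma\in\Bil_{\bR}(S)$ decomposes as $\Gamma=\Gamma_{--}+\Gamma_{-+}+\Gamma_{++}$ with $\Gamma_{\pm\pm}$ as in (\ref{r--})--(\ref{r++}). Since both $e\cdot Q_{+}$ and $e\cdot\tilde{Q}_{+}$ lie in $S_{-}$, the hypothesis expands into
$$
\Gamma_{--}(e\cdot Q_{+},\tilde{Q}_{-})+\Gamma_{-+}(e\cdot Q_{+},\tilde{Q}_{+})+\Gamma_{--}(Q_{-},e\cdot\tilde{Q}_{+})+\Gamma_{-+}(e\cdot\tilde{Q}_{+},Q_{+})=0.
$$
Notice immediately that $\Gamma_{++}$ never appears, which already yields the claim that $\Gamma_{++}$ is an arbitrary symmetric bilinear form on $S_{0,n-1}$.

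Next, to show $\Gamma_{--}=0$, I would set $Q_{+}=0$ and $\tilde{Q}_{-}=0$, leaving $\Gamma_{--}(Q_{-},e\cdot\tilde{Q}_{+})=0$ for all $Q_{-}\in S_{-}$, $\tilde{Q}_{+}\in S_{+}$, $e\in E$. Since $e\cdot e\cdot=-\langle e,e\rangle\,\Id$ is invertible for nonzero $e$, Clifford multiplication provides a surjection $S_{+}\to S_{-}$, so the vectors $e\cdot\tilde{Q}_{+}$ sweep out all of $S_{-}$. Non-degeneracy in the second slot then forces $\Gamma_{--}=0$.

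Finally, to identify the constraint on $\Gamma_{-+}$, I would set $Q_{-}=\tilde{Q}_{-}=0$, obtaining
$$
\Gamma_{-+}(e\cdot Q_{+},\tilde{Q}_{+})+\Gamma_{-+}(e\cdot\tilde{Q}_{+},Q_{+})=0.
$$
Identifying $S_{\pm}$ with $S_{0,n-1}$ via the chosen Witt decomposition, $\Gamma_{-+}$ becomes a bilinear form $\gamma$ on $S_{0,n-1}$ and the identity reads $\gamma(e\cdot y,\tilde{y})+\gamma(e\cdot\tilde{y},y)=0$ for all $e\in E$ and $y,\tilde{y}\in S_{0,n-1}$. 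I would then match this against Definition \ref{tsi}: a case-by-case check on the four admissibility sectors shows that $(\tau,\sigma)=(-1,+1)$ (combining symmetry of $\gamma$ with Clifford-skewness) and $(\tau,\sigma)=(+1,-1)$ (combining antisymmetry of $\gamma$ with Clifford-symmetry) both make the two terms cancel, whereas the sectors with $\tau\sigma=+1$ make the two terms reinforce each other, forcing $\gamma\equiv 0$ by surjectivity of $e\cdot$. This identifies $\Gamma_{-+}$ with an element of $\Bil_{\bR}(S_{0,n-1})^{\tau\sigma=-1}$.

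The main obstacle I anticipate is the subtle interpretation of the last statement: the derived identity is a single linear constraint and, strictly, does not by itself separate into the four admissibility sectors nor immediately yield $\so(E)$-invariance of $\gamma$. I expect to resolve this by using the characterization just above, which is the natural meaning of $\tau\sigma=-1$ for the decompositions into admissible forms employed throughout the paper, and by noting that the remaining $\so(E)$-invariance is either part of the paper's standing conventions for the bilinear forms $\Gamma$ arising as brackets of superizations, or can be recovered by extending the $E^{*}$-invariance hypothesis along the orbit of $\rho_{\spin}$-generated operators.
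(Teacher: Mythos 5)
Your treatment of $\Gamma_{++}$ and $\Gamma_{--}$ is fine and essentially the paper's argument (the paper kills $\Gamma_{--}$ with the mirror choice $Q_{-}=\tilde{Q}_{+}=0$, you with $Q_{+}=\tilde{Q}_{-}=0$; both work by surjectivity of $e\cdot:S_{+}\to S_{-}$). The genuine gap is in the last step, and you have flagged it yourself without closing it: the statement $\Gamma_{-+}\in\Bil_{\bR}(S_{0,n-1})^{\tau\sigma=-1}$ only makes sense after you know that $\Gamma_{-+}$ is $\so(0,n-1)$-invariant, because the admissible forms (and hence the invariants $\tau,\sigma$) give a basis only of the space $\Bil_{\bR}(S_{0,n-1})^{\so(0,n-1)}$; a general bilinear form has no decomposition into "admissibility sectors". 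Your two proposed escapes do not work: the invariance is not a "standing convention" of the paper (the lemma assumes nothing beyond the displayed identity, and it is applied later to brackets whose invariance is exactly what is being established), and "extending the $E^{*}$-invariance hypothesis along the orbit of $\rho_{\spin}$-generated operators" is not an argument.

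The missing step is, however, a short Clifford computation, and it is precisely what the paper does: from the derived relation $\Gamma_{-+}(e\cdot Q_{+},\tilde{Q}_{+})=-\Gamma_{-+}(e\cdot\tilde{Q}_{+},Q_{+})$ applied twice, together with $e_{j}e_{j}=\Id$ and $e_{j}e_{i}e_{j}=-e_{i}$ for $i\neq j$, one gets
\begin{equation*}
\Gamma_{-+}(e_{i}e_{j}\cdot Q_{+},\tilde{Q}_{+})=-\Gamma_{-+}(e_{i}\cdot\tilde{Q}_{+},e_{j}\cdot Q_{+})=-\Gamma_{-+}(Q_{+},e_{i}e_{j}\cdot\tilde{Q}_{+}),
\end{equation*}
and since the elements $e_{i}e_{j}$ ($i<j$) span the image of $\so(0,n-1)$ in $\Cl_{0,n-1}$, this is exactly $\so(0,n-1)$-invariance of $\Gamma_{-+}$. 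Only after this may one write $\Gamma_{-+}=\sum_{k}\Gamma_{-+}^{k}$ with admissible components and run your sector-by-sector cancellation (which then coincides with the paper's computation $\sum_{k}\tau(k)\sigma(k)\Gamma_{-+}^{k}(e\cdot\tilde{Q}_{+},Q_{+})=-\sum_{k}\Gamma_{-+}^{k}(e\cdot\tilde{Q}_{+},Q_{+})$, killing the $\tau\sigma=+1$ components because $e\cdot$ is invertible and the admissible forms are linearly independent). With that paragraph inserted, your proof is complete and follows the paper's route.
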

\begin{pf}
Choosing $Q_{-}=0=\tilde{Q}_{+}$, it follows that
$$\Gamma(\left(
\begin{array}{c}
e\cdot Q_{+} \\
0 \\
\end{array}
\right),
\left(
\begin{array}{c}
\tilde{Q}_{-} \\
0 \\
\end{array}
\right))=0
$$
\textit{i.e.} $\Gamma_{--}=0$. Choosing $Q_{-}=0=\tilde{Q}_{-}$, it follows that
$$0=\Gamma(\left(
\begin{array}{c}
e\cdot Q_{+} \\
0 \\
\end{array}
\right),
\left(
\begin{array}{c}
0 \\
\tilde{Q}_{+} \\
\end{array}
\right))+
\Gamma(\left(
\begin{array}{c}
0 \\
Q_{+} \\
\end{array}
\right)
\left(
\begin{array}{c}
e\cdot \tilde{Q}_{+} \\
0 \\
\end{array}
\right))=
$$
$$\phantom{cc}\Gamma(\left(
\begin{array}{c}
e\cdot Q_{+} \\
0 \\
\end{array}
\right),
\left(
\begin{array}{c}
0 \\
\tilde{Q}_{+} \\
\end{array}
\right))+
\Gamma(\left(
\begin{array}{c}
e\cdot \tilde{Q}_{+} \\
0 \\
\end{array}
\right),
\left(
\begin{array}{c}
0 \\
Q_{+} \\
\end{array}
\right)
)
$$
\textit{i.e.}
$\Gamma_{-+}(e\cdot Q_{+},\tilde{Q}_{+})=-
\Gamma_{-+}(e\cdot \tilde{Q}_{+},Q_{+})
$.
It follows that
$$
\Gamma_{-+}(e_{i}e_{j}\cdot Q_{+},\tilde{Q}_{+})=-\Gamma_{-+}(e_{i}\cdot\tilde{Q}_{+},e_{j}\cdot Q_{+})
=\Gamma_{-+}(e_{j}e_{i}e_{j}\cdot\tilde{Q}_{+},e_{j}\cdot Q_{+})=
$$
$$
-\Gamma_{-+}(e_{j}e_{j}\cdot Q_{+},e_{i}e_{j}\cdot\tilde{Q}_{+})
=-\Gamma_{-+}(Q_{+},e_{i}e_{j}\cdot\tilde{Q}_{+})
$$
for every $1\leq i\lneq j\leq n-1$, \textit{i.e.} $\Gamma_{-+}$ is an $\so(0,n-1)$-invariant form. We calculate the invariants. Decompose $\Gamma_{-+}$ into a direct sum 
$$
\Gamma_{-+}=\sum_{k}\Gamma_{-+}^{k}
$$ 
of admissible forms $\Gamma_{-+}^{k}$ with invariants $(\tau(k),\sigma(k))$ and get that
$$
\sum_{k}{\tau(k)\sigma(k)}\Gamma_{-+}^{k}(e\cdot \tilde{Q}_{+},Q_{+})=\Gamma_{-+}(e\cdot Q_{+},\tilde{Q}_{+})=
$$
$$
-\Gamma(e\cdot \tilde{Q}_{+},Q_{+})=-\sum_{k}\Gamma_{-+}^{k}(e\cdot \tilde{Q}_{+},Q_{+})
$$
for every $Q_{+},\tilde{Q}_{+}\in S_{0,n-1}$. The lemma is thus proved.
\end{pf}
\subsection{Zero charge superization}\hfill\newline\\
We consider zero charge superizations, of the Heisenberg algebra $\gheis_{b}=E^{*}+E+\bR\p$,
$$\gg=\gg_{\0}+\gg_{\ou}=\gheis_{b}+S$$
such that the action of $E$ on $S$ is upper triangular and $[S,S]\subseteq\bR\p$, \textit{i.e.}
\be
\label{fftt}
\rho(e)=\begin{pmatrix} 0 & \rho_{12}(e) \\ 0 & 0 \end{pmatrix}
\ee
\be
\label{ddee}
[Q,\tilde{Q}]=\p(Q,\tilde{Q})\mathrm{p}\pc\pc\pc\pc\pc
\ee
where $e\in E$ and $Q,\tilde{Q}\in S$. Here $\p\in\Bil_{\bR}(S)$ is a symmetric $\gheis_{b}$-invariant bilinear form on S. The following proposition holds.
\begin{proposition}
\label{zzz}
If $\dim E=n-1$, every zero charge superization of the Heisenberg algebra $\gheis_{b}=E^{*}+E+\bR\p$ satisfying (\ref{fftt}) and (\ref{ddee}) is uniquely determined by
$$
\rho_{12}\in\Hom_{\bR}(E,\End_{\bR}(S_{0,n-1}))\phantom{c},\phantom{c} \p_{-+}\in\Bil_{\bR}(S_{0,n-1})^{\tau\sigma=-1}\phantom{c},\phantom{c} p_{++}\in\Bil_{\bR}(S_{0,n-1})
$$ 
where $p_{++}$ is a symmetric bilinear form on $S_{0,n-1}$ such that 
$$
\p_{-+}(\rho_{12}(e)\cdot,\cdot)\in\Bil_{\bR}(S_{0,n-1})
$$
is skew-symmetric for every $e\in E$.       
\end{proposition}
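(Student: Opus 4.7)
My plan is to unpack the data defining such a superization into two independent pieces, the even action $\rho\colon\gheis_{b}\rightarrow\ggl_{\bR}(S)$ and the bracket $[\cdot,\cdot]\colon S\vee S\rightarrow\bR\p$, and then read off the constraints on each from the axioms of a Lie superalgebra. Since $\rho(\p)=0$ and $\rho(e^{\flat})$ is fixed by the spin condition (\ref{spinrapr}), the representation $\rho$ is entirely encoded by the upper triangular blocks $\rho_{12}(e)$ as in (\ref{fftt}). By Theorem \ref{0c} such an extension of (\ref{spinrapr}) to a zero charge representation of $\gheis_{b}$ exists iff the pair $(\rho_{11},\rho_{12})=(0,\rho_{12})$ is suitable in the sense of Definition \ref{suitable}; but the symmetry condition (\ref{simmetria}) and the abelianness $[\rho_{11}(E),\rho_{11}(E)]=0$ are both trivially satisfied when $\rho_{11}=\rho_{22}=0$. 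Hence $\rho_{12}\in\Hom_{\bR}(E,\End_{\bR}(S_{0,n-1}))$ may be prescribed arbitrarily.

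Next I would analyze the symmetric bilinear form $\p\in\Bil_{\bR}(S)$ determining the bracket via (\ref{ddee}). The super-Jacobi identity of type $(\gg_{\0},\gg_{\ou},\gg_{\ou})$ reads $[x,[Q,\tilde Q]]=[[x,Q],\tilde Q]+[Q,[x,\tilde Q]]$ for $x\in\gheis_{b}$; since $[\gheis_{b},\p]=0$ and $[\gg_{\ou},\gg_{\ou}]\subseteq\bR\p$, the left-hand side vanishes and the identity is equivalent to $\gheis_{b}$-invariance of $\p$. The triple-odd Jacobi is automatic because each iterated bracket lies in $\rho(\bR\p)\cdot S=0$. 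Invariance under $\p\in\gheis_{b}$ is trivial. Invariance under $E^{*}$ is exactly the hypothesis of Lemma \ref{MM} (applied with $Be$ in place of $e$, which is immaterial since $B$ is invertible), so it yields $\p_{--}=0$, $\p_{-+}\in\Bil_{\bR}(S_{0,n-1})^{\tau\sigma=-1}$, and $\p_{++}$ an arbitrary symmetric bilinear form on $S_{0,n-1}$.

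The only remaining condition is $E$-invariance of $\p$, i.e. $\p(\rho(e)Q,\tilde Q)+\p(Q,\rho(e)\tilde Q)=0$ for all $e\in E$ and $Q,\tilde Q\in S$. Expanding using (\ref{fftt}) and the already established decomposition $\p=\p_{-+}+\p_{++}$ (with $\p_{--}=0$), the $\p_{--}$-terms that would otherwise appear drop out, the $\p_{++}$-terms never appear (both arguments on the left have vanishing $S_{+}$-component), and the surviving relation is
\[
\p_{-+}(\rho_{12}(e)Q_{+},\tilde Q_{+})+\p_{-+}(\rho_{12}(e)\tilde Q_{+},Q_{+})=0\qquad\forall\,Q_{+},\tilde Q_{+}\in S_{0,n-1},
\]
which says precisely that $\p_{-+}(\rho_{12}(e)\cdot,\cdot)$ is skew-symmetric on $S_{0,n-1}$. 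Conversely, given any triple $(\rho_{12},\p_{-+},\p_{++})$ satisfying these conditions, the above analysis shows that defining $\rho$ by (\ref{fftt}) and $[Q,\tilde Q]:=\p(Q,\tilde Q)\p$ with $\p=\p_{-+}+\p_{++}$ produces a well-defined superization of the required form, establishing the bijective correspondence.

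I do not anticipate any substantial obstacle: the main point is simply to verify that no Jacobi identity or invariance condition imposes a constraint between $\p_{++}$ and the other data, which follows because $\rho(e)$ is strictly upper triangular and $\rho(\p)=0$ together decouple the $S_{+}\vee S_{+}$-sector of $\p$ from both the representation and the image of the odd bracket.
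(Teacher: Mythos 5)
Your proof is correct and follows essentially the same route as the paper, which disposes of the proposition in a single sentence by appealing to Lemma \ref{MM} together with $E$-invariance of the bracket. You have fleshed out that one-line argument with all the necessary verifications: that with $\rho_{11}=0$ the suitability conditions of Theorem \ref{0c} are vacuous (so $\rho_{12}$ is unconstrained), that the odd-odd-odd Jacobi identity is automatic because $\rho(\p)=0$, that the even-odd-odd Jacobi reduces to $\gheis_b$-invariance of $\p$, and that $E^*$-invariance feeds into Lemma \ref{MM} (via invertibility of $B$) while $E$-invariance yields exactly the skew-symmetry condition on $\p_{-+}(\rho_{12}(e)\cdot,\cdot)$. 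Nothing is missing and no step is off.
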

\begin{proof}
This is a direct consequence of Lemma \ref{MM} and the condition of $E$-invariance for the bracket (\ref{ddee}).
\end{proof}
The interested reader can construct plenty of examples of zero charge superizations of the Heisenberg algebra. To make the paper easier to read, we do not pursue this point but, for the sake of completness, we give one example. Consider the case $n-1\equiv 2$ (mod $8$). 
\begin{proposition}
\label{zzzz}
If $n-1\equiv 2 \pc (mod\pc 8)$, every zero charge superization of Heisenberg algebras $\gheis_{b}=E^{*}+E+\bR\p$ with $[S,S]=\bR\p$ is such that
$$
\p_{--}=\p_{-+}=0\qquad,\qquad \p_{++}\neq 0\qquad,\qquad\rho_{12}\phantom{cc}{\it suitable}
$$
and
$$
\rho_{11}=0\qquad{\it or}\qquad \Hom(E,J\bR)\ni\rho_{11}\neq 0\quad,\quad \p_{++}\in\Bil_{\bR}(S_{0,n-1})^{J}
$$
where $\Bil_{\bR}(S_{0,n-1})^{J}$ is the space of $J$-hermitian bilinear form on $S_{0,n-1}$.
\end{proposition}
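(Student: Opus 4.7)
My plan is to combine the parametrization of zero-charge representations from Theorem \ref{0c} with the $\gheis_{b}$-invariance constraints on the odd bracket coming from Lemma \ref{MM}, and then reduce to a short algebraic computation involving the complex structure $J\in\cC_{0,n-1}$. By Theorem \ref{0c} the representation has the block form
\[
\rho(e)=\begin{pmatrix}\rho_{11}(e) & \rho_{12}(e)\\ 0 & \overline{\rho_{11}(e)}\end{pmatrix},
\]
with $(\rho_{11},\rho_{12})$ a suitable pair; hence suitability of $\rho_{12}$ is automatic. For $n-1\equiv 2\pmod 8$ the table of subsection \ref{hhyy} identifies $\cC_{0,n-1}=\bR\Id\oplus J\bR$ with $J^{2}=-\Id$ and $\tau(J)=-1$; since $\bC\cong\cC_{0,n-1}$ is commutative, the requirement $[\rho_{11}(E),\rho_{11}(E)]=0$ is automatic, and I decompose $\rho_{11}(e)=a(e)\Id+b(e)J$ with $a,b\in\Hom_{\bR}(E,\bR)$, so that $\overline{\rho_{11}(e)}=a(e)\Id-b(e)J$. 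Writing the bracket as $[Q,\tilde Q]=\p(Q,\tilde Q)\p$ with $\p\in\Bil_{\bR}(S)$ symmetric and using that $\bR\p$ is central in $\gheis_{b}$, its $\gheis_{b}$-invariance amounts to $\p(\rho(x)Q,\tilde Q)+\p(Q,\rho(x)\tilde Q)=0$ for every $x\in E^{*}+E$.

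Restricting to $x\in E^{*}$ and applying Lemma \ref{MM} yields $\p_{--}=0$, $\p_{-+}\in\Bil_{\bR}(S_{0,n-1})^{\tau\sigma=-1}$, and $\p_{++}$ an arbitrary symmetric form on $S_{0,n-1}$. The decisive first input is then that $\Bil_{\bR}(S_{0,n-1})^{\tau\sigma=-1}=0$ in this dimension: I check it in the base case $n-1=2$, where $S_{0,n-1}=\bR^{2}$ and both admissible $\so(0,n-1)$-invariant basis forms (the standard inner product and the standard symplectic form) carry $\tau\sigma=+1$, and I propagate the vanishing by the $8$-periodicity of the admissible-form invariants recorded in \cite{AC}. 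Consequently $\p_{-+}=0$, and the hypothesis $[S,S]=\bR\p\neq 0$ forces $\p_{++}\neq 0$.

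With $\p=\p_{++}$ now supported on $S_{+}\vee S_{+}$ and $\rho_{12}(e)$ sending $S_{+}$ into $S_{-}$ where $\p$ vanishes, the remaining $E$-invariance collapses to
\[
\p_{++}(\overline{\rho_{11}(e)}Q_{+},\tilde Q_{+})+\p_{++}(Q_{+},\overline{\rho_{11}(e)}\tilde Q_{+})=0.
\]
Setting $T(Q_{+},\tilde Q_{+}):=\p_{++}(JQ_{+},\tilde Q_{+})+\p_{++}(Q_{+},J\tilde Q_{+})$ and substituting, this rewrites as the linear-in-$e$ identity $a(e)\cdot 2\p_{++}=b(e)\cdot T$ between bilinear forms. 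Either $\rho_{11}\equiv 0$, or else $\p_{++}$ and $T$ are $\bR$-proportional; in the latter case I write $T=2\lambda\p_{++}$ and apply the defining identity with $\tilde Q_{+}\mapsto J\tilde Q_{+}$ and with $Q_{+}\mapsto JQ_{+}$, using $J^{2}=-\Id$ to eliminate $\p_{++}(JQ_{+},J\tilde Q_{+})$ and deduce $(\lambda^{2}+1)\p_{++}=0$; since $\p_{++}\neq 0$ this forces $\lambda=0$. Thus $T=0$, equivalently $\p_{++}\in\Bil_{\bR}(S_{0,n-1})^{J}$, and the residual identity $a(e)\p_{++}=0$ gives $a\equiv 0$, placing $\rho_{11}$ in $\Hom(E,J\bR)$.

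The main obstacle is the first input, $\Bil_{\bR}(S_{0,n-1})^{\tau\sigma=-1}=0$ for $n-1\equiv 2\pmod 8$: it is a bookkeeping check on the invariants of the canonical admissible form $h$ and its twist $h(J\cdot,\cdot)$, but one must track the $8$-periodic shifts of $\tau$ and $\sigma$ with some care. Once that ingredient is secured, the remainder is the brief quadratic manipulation in $J$ sketched above.
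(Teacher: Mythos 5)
Your proposal follows essentially the same route as the paper's proof: Lemma \ref{MM} plus the fact (recorded in \cite{AC}) that for $n-1\equiv 2\pmod 8$ every admissible form on $S_{0,n-1}$ has $\tau\sigma=+1$ gives $\p_{--}=\p_{-+}=0$, hence $\p_{++}\neq 0$ from $[S,S]=\bR\p$; then $E$-invariance of the bracket collapses to the identity for $\overline{\rho_{11}}(e)=a(e)\Id-b(e)J$, and a case analysis over $\cC_{0,n-1}\cong\bC$ finishes the argument (the paper dismisses this last part as ``easy to prove'', so your extra detail is welcome).

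One step is mis-stated, though. From $T=2\lambda\p_{++}$ the substitutions $Q_{+}\mapsto JQ_{+}$ and $\tilde Q_{+}\mapsto J\tilde Q_{+}$ do \emph{not} yield $(\lambda^{2}+1)\p_{++}=0$ unconditionally; that identity together with $\p_{++}\neq 0$ would force $\lambda^{2}=-1$, impossible over $\bR$, so it cannot ``force $\lambda=0$'' as you write. The correct elimination gives $\lambda^{2}(\lambda^{2}+1)\p_{++}=0$ (equivalently: assume $\lambda\neq 0$, deduce $\p_{++}(JQ_{+},\tilde Q_{+})=\lambda\,\p_{++}(Q_{+},\tilde Q_{+})$ and then, applying $J$ again, $(\lambda^{2}+1)\p_{++}=0$, a contradiction), from which $\lambda=0$, $T=0$ and $a\equiv 0$ do follow. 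A second, smaller point: the sub-case $b\equiv 0$, $a\not\equiv 0$ is not literally covered by ``$T$ proportional to $\p_{++}$''; it is excluded directly, since then $2a(e)\p_{++}=0$ would give $\p_{++}=0$, contradicting $[S,S]=\bR\p$ (this is the paper's bullet $\rho_{11}^{\bR}\neq 0=\rho_{11}^{\bC}\Rightarrow\p_{++}=0$). With these two repairs your argument is complete and coincides with the paper's.
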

\begin{pf}
Lemma \ref{MM} implies that $\p_{--}=\p_{-+}=0$. Indeed all the admissible forms on $S_{0,n-1}$ have invariants $(\tau,\sigma)$ such that $\tau\sigma=1$ (see \cite{AC}). Write the condition of $E$-invariance of the bracket (\ref{ddee}) as:
$$\p_{++}(\overline{\rho_{11}}(e)Q_{+},\tilde{Q}_{+})+
\p_{++}(Q_{+},\overline{\rho_{11}}(e)\tilde{Q}_{+})=0\phantom{c}.
$$
Denote by
$
\rho_{11}:=\rho_{11}^{\bR}+\rho_{11}^{\bC}
$
the decompostion of $\rho_{11}\in\Hom_{\bR}(E,\cC_{0,n-1})$ in real and imaginary parts of $\cC_{0,n-1}\cong\bC=\bR+J\bR$. It is easy to prove that
\begin{itemize}
\item[$\cdot$] If $\rho_{11}\equiv 0$ then there are no costraints on $\p_{++}$,
\item[$\cdot$] If $\rho_{11}^{\bR}\neq 0= \rho_{11}^{\bC}$ then $\p_{++}=0$,
\item[$\cdot$] If $\rho_{11}^{\bR}=0\neq \rho_{11}^{\bC}$ then $\p_{++}$ must be $J-$hermitian,
\item[$\cdot$] If $\rho_{11}^{\bR}\neq 0$, $\rho_{11}^{\bC}\neq 0$ then $\p_{++}=0$.
\end{itemize}
The proposition is thus proved.
\end{pf}
\subsection{Non-zero charge superization}\hfill\newline\\
\label{dieci}
Theorem \ref{al} implies that there exist superizations of the Heisenberg algebra $\gheis_{b}$, with non-zero charge representation, only when the $\so(0,n-1)$-module $S_{0,n-1}$ is not irreducible. In "low dimension", this happens when $\dim E=n-1=4,6,7,8$. We study the last case \textit{i.e.} $E\cong \bR^{0,8}$. By Theorem \ref{0p}, we need to determine which linear maps
\be
\label{sui}
\rho_{12}:\bR^{0,8}\rightarrow\End_{\bR}(S_{0,8})
\ee
are $(\rho_{11},\rho_{22})$-suitable. In this case, the condition does not depend on the choice of the pair $(\rho_{11},\rho_{22})$, \textit{i.e.} on $h_{3}\in\Hom_{\bR}(E,\bR)$ and $h_{1}\in\bR-\left\{0\right\}$. Therefore we can speak of suitable maps. 
Let $E\in\cC_{0,8}$ be the para-complex structure described in Theorem \ref{0p} (unluckily this has the same notation of the Euclidean space $E$ but there should be no danger of confusion) and let
\be
\label{subito}
S_{0,8}=S_{0,8}^{+}\oplus S_{0,8}^{-}
\ee
be the decomposition of the spin module $S_{0,8}$ into the $\pm 1$-eigenspaces of $E$, \textit{i.e.} the decomposition (\ref{semi}) of $S_{0,8}$ into (inequivalent) semi-spin modules. Since $\tau(E)=-1$, Clifford multiplication by a vector $e\in E$ interchanges the two eigenspaces
and recall that $S_{0,8}^{\pm}$ is a self-dual module \textit{i.e.} $S_{0,8}^{\pm}\cong (S_{0,8}^{\pm})^{*}$
as $\so(0,8)$-modules.
The decomposition (\ref{dec}) of the spin module $S=S_{1,9}$ in Lorentzian signature into spin modules in Euclidean signature
$$
S=S_{-}+S_{+}= S_{0,8}+S_{0,8}\ni\begin{pmatrix} Q_{-}\\ Q_{+} \end{pmatrix}
$$
is then further refined to
$$
S=(S_{-}^{+}+S_{-}^{-})+(S_{+}^{+}+S_{+}^{-})\ni\begin{pmatrix} Q_{-}^{+} \\ Q_{-}^{+} \\ Q_{+}^{+} \\ Q_{+}^{-} \end{pmatrix}
$$
where the upper sign refers to (\ref{subito}). Recall that the supervector space
$$
\gheis_{b} + S=(E^{*}+E+\bR\mathrm{p})+(S_{0,8}+S_{0,8})
$$
becomes a superization of the Heisenberg algebra, with non-zero charge representation, when the action of the even part $\gheis_{b}$ on the odd part $S$ is given by
$$
\rho(e^{\flat})=\sqrt{2} \begin{pmatrix} 0 & Be \\ 0 & 0 \end{pmatrix}
\phantom{c},\phantom{c}
\rho(\mathrm{p})=2\sqrt{2}h_{1}\begin{pmatrix} 0 & \pi^{+} \\ 0 & 0 \end{pmatrix}
$$
$$
\rho(e)=\frac{1}{2}\begin{pmatrix} h_{3}(e)\Id-2h_{1}e\circ\pi^{-} & 2\rho_{12}(e) \\ 0 & h_{3}(e)\Id+2h_{1}e\circ\pi^{+} \end{pmatrix}
$$
where $\pi^{\pm}:=\frac{1}{2}(\Id\pm E)$ denotes projection on the semi-spin module $S_{0,8}^{\pm}$. The case with opposite sign is completely analogous. In order to solve the condition of suitability, we need to fix the notations. The irreducible $\so(E)\cong\so(0,8)$ modules are denoted as in \cite{OV}: for example
$$
R(\pi_{1}):=\Lambda^{1}E\quad,\quad R(\pi_{2}):=\Lambda^{2}E\quad,\quad R(\pi_{3}):=S_{0,8}^{+}\quad,\quad R(\pi_{4}):=S_{0,8}^{-} 
$$
are the irreducible representations associated with the nodes of the Dynkin diagram
$$
\xymatrix@C=1pc@R=1pc{
& & \circ\\
\circ\ar@{-}[r]& \circ\ar@{-}[ur]\ar@{-}[dr] & \\
& & \circ
}
$$
and more generally $R(\Lambda)$ is the irreducible module of $\so(E)$ with highest weight $\Lambda$ (with the exception of $R(\hat{\pi}_{4}):=\Lambda^{4}E$ which is reducible). In the notation of \cite{OV}
$$
R(\hat{\pi}_{p}):=\Lambda^{p}E
$$
for every $1\leq p\leq 8$ and in particular
$$
\Lambda^{4}E=R(\hat{\pi}_{4})=R(2\pi_{3})\oplus R(2\pi_{4})=\Lambda^{4}_{+}E\oplus\Lambda^{4}_{-}E
$$
is the decomposition of $\Lambda^{4}E$ into self-dual and anti self-dual forms. The main theorem of this subsection is the following.
\begin{theorem}
\label{aaa}
If $\dim E=8$, non-zero charge superizations of the Heisenberg algebra $\gheis_{b}=E^{*}+E+\bR\p$ with fixed $h_{1}\in\bR-\left\{0\right\}$ are in bijective correspondence with elements of the $\so(0,8)$-module
$$
R(\pi_{1})\oplus R(\pi_{1})\oplus\bR\oplus R(2\pi_{1})\oplus R(\pi_{1}+\pi_{2})\oplus R(\hat{\pi}_{3})\oplus R(\pi_{1})\oplus R(\pi_{2})\oplus R(2\pi_{4})\quad.
$$
Every non-zero charge superization is odd-commutative.
\end{theorem}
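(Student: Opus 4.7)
The plan is as follows. By Theorem \ref{0p} (applied with $n-1 = 8 \equiv 0 \pmod 8$), every non-zero charge spin representation $\rho \colon \gheis_b \to \ggl_\bR(S)$ with $h_1 \in \bR - \{0\}$ fixed is determined by a pair $(h_3, \rho_{12})$, where $h_3 \in \Hom_\bR(E, \bR) \cong E^* \cong R(\pi_1)$ controls the diagonal part of $\rho(e)$ and $\rho_{12} \in \Hom_\bR(E, \End_\bR(S_{0,8}))$ is a suitable map. Since suitability is defined using only $\so(0,8)$-equivariant operations (Clifford multiplication by elements of $E$, composition of endomorphisms, and the projections $\pi^\pm$), the parameter space is naturally an $\so(0,8)$-module, and $h_3$ already accounts for one of the three copies of $R(\pi_1)$ listed in the theorem.

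The first technical step is to establish odd-commutativity. Applying the super-Jacobi identity to $(\p, Q, \tilde{Q})$ with $Q, \tilde{Q} \in S$, centrality of $\p$ in $\gheis_b$ gives $[\rho(\p)Q, \tilde{Q}] + [Q, \rho(\p)\tilde{Q}] = 0$. Since the charge is nonzero, $\rho(\p) = 2\sqrt{2}\,h_1\,\pi^+$ is a nontrivial nilpotent whose image lies in the $S^+$-semi-spinor component. Combining this with the Jacobi relations coming from $e^\flat$ and $e$, together with Lemma \ref{MM} on the shape of $E^*$-invariant symmetric bilinear forms on $S$, one deduces the vanishing of the three components $[\cdot,\cdot]_{\bR\p}$, $[\cdot,\cdot]_E$, $[\cdot,\cdot]_{E^*}$ of the odd bracket by a case analysis on the refined splitting $S = S_-^+ \oplus S_-^- \oplus S_+^+ \oplus S_+^-$.

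The central and technically hardest step is the $\so(0,8)$-decomposition of the space of suitable maps. The key input is the $\so(0,8)$-isomorphism
\[
\End_\bR(S_{0,8}) \,\cong\, \Cl_{0,8} \,=\, \bigoplus_{k=0}^{8} \Lambda^k E,
\]
so that $\Hom_\bR(E, \End_\bR(S_{0,8})) \cong \bigoplus_k E \otimes \Lambda^k E$, each summand decomposing into $\so(0,8)$-irreducibles via the standard $\Lambda^{k+1} E \oplus W_k$ hook pattern. Triality, the outer $S_3$-symmetry permuting $R(\pi_1), R(\pi_3), R(\pi_4)$, is essential for identifying the spinorial pieces, in particular the module $R(2\pi_4)$. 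Writing $\rho_{12}(e) = A(e)+B(e)+C(e)+D(e)$ in blocks relative to $S_{0,8} = S^+_{0,8}\oplus S^-_{0,8}$ and unpacking the suitability condition as the symmetry in $(e,f)$ of $-h_1\bigl(e\pi^-\rho_{12}(f) + \rho_{12}(f)\,e\pi^+\bigr)$ produces explicit symmetry relations on $(A,B,C,D)$; matching their solution space isotypic-component by isotypic-component yields the remaining summands $R(\pi_1) \oplus \bR \oplus R(2\pi_1) \oplus R(\pi_1+\pi_2) \oplus R(\hat{\pi}_3) \oplus R(\pi_1) \oplus R(\pi_2) \oplus R(2\pi_4)$ and completes the proof. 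The main obstacle is precisely this matching, which demands either careful branching and triality bookkeeping, or dually the identification of suitability as projection onto an explicit $\so(0,8)$-invariant subspace of $E \otimes \Cl_{0,8}$.
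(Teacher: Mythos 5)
Your reduction of the classification half follows the paper's own route: by Theorem \ref{0p} the representation data with $h_{1}$ fixed is the pair $(h_{3},\rho_{12})$ with $h_{3}\in\Hom_{\bR}(E,\bR)\cong R(\pi_{1})$, and your rewriting of suitability as the symmetry in $(e,f)$ of $-h_{1}\bigl(e\,\pi^{-}\circ\rho_{12}(f)+\rho_{12}(f)\circ e\,\pi^{+}\bigr)$ is exactly the content of the paper's three equivariant maps (\ref{11})--(\ref{33}), whose common kernel the paper computes inside $E^{*}\otimes\Cl_{0,8}\cong E^{*}\otimes\Lambda E$ by listing explicit generators of every irreducible summand and tracking their images. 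You state this matching as ``the main obstacle'' and do not carry it out, so for Proposition \ref{S} your text is a plan (the same plan as the paper's) rather than a proof; that is a completeness issue, not a wrong idea.

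The genuine gap is in the odd-commutativity claim. You propose to deduce $[S,S]=0$ from the even--odd--odd Jacobi identities (equivariance under $\p$, $e^{\flat}$, $e$) together with Lemma \ref{MM}. Equivariance alone does not suffice: take $h_{3}=0$ and define $[Q,\tilde{Q}]:=\Gamma(\pi^{+}Q_{+},\pi^{+}\tilde{Q}_{+})\,\p$ for an arbitrary nonzero symmetric bilinear form $\Gamma$ on $S_{0,8}^{+}$. Since $\rho(e^{\flat})$ and $\rho(\p)$ annihilate the $S_{+}$-component and $\pi^{+}\circ e\cdot\circ\pi^{+}=0$, this bracket is invariant under all of $\gheis_{b}$, yet it is nonzero. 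It is excluded only by the odd Jacobi identity (\ref{000}): here $[Q,[Q,Q]]=\Gamma(\pi^{+}Q_{+},\pi^{+}Q_{+})\,\rho(\p)Q$, whose $S_{-}$-component is $2\sqrt{2}\,h_{1}\,\Gamma(\pi^{+}Q_{+},\pi^{+}Q_{+})\,\pi^{+}Q_{+}$, forcing $\Gamma=0$. Correspondingly, the paper's proof of odd-commutativity rests essentially on the cubic condition $[Q,[Q,Q]]=0$, on the positive definiteness of the admissible form $h$ (used to pair the resulting identities against $e_{j}Q_{+}^{+}$ and extract the vanishing of $(e_{j})_{++}$, $(e_{j}^{\flat})_{++}$ and finally $\p$), and on the fact that $h_{E}$ is, up to scale, the only admissible form on $S_{0,8}$ with $\tau\sigma=-1$, which is what confines $[S,S]$ to $E^{*}+\bR\p$ and simplifies the equations. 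None of these ingredients appear in your argument, so as written the second assertion of the theorem, and hence also the bijectivity statement (which presupposes that the bracket carries no extra data), is not established.
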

The proof of the theorem is accomplished through a careful analysis of all possible suitable maps (\ref{sui}) and of all possible brackets between odd elements. Theorem {\ref{aaa}} follows from the two propositions below and the remark that $h_{3}\in\Hom_{\bR}(E,\bR)\cong R(\pi_{1})$.
\begin{proposition}
\label{S}
The $\so(0,8)$-module which consists of suitable maps (\ref{sui}) is isomorphic to
$$
R(\pi_{1})\oplus\bR\oplus R(2\pi_{1})\oplus R(\pi_{1}+\pi_{2})\oplus R(\hat{\pi}_{3})\oplus R(\pi_{1})\oplus R(\pi_{2})\oplus R(2\pi_{4})\quad.
$$
\end{proposition}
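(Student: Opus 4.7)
I will translate the suitability condition into a system of $\so(E)$-equivariant linear equations on the chiral blocks of $\rho_{12}$ and then apply classical $\so(8)$-representation theory (Klimyk, triality, Schur's lemma) to read off the kernel as a sum of irreducibles.

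\textbf{Step 1 (block form of suitability).} Since $E\in\cC_{0,8}$ is a para-complex structure with $\tau(E)=-1$, Clifford multiplication by $e\in E$ interchanges $S^+_{0,8}$ and $S^-_{0,8}$. With $\pi^{\pm}=\tfrac12(\Id\pm E)$ and $h_3=0$ (allowed, since suitability is independent of $h_3$), the formulas of Theorem~\ref{0p} give $\rho_{11}(e)=-h_1\,e\pi^-$ and $\rho_{22}(e)=h_1\,e\pi^+$. Decompose $\rho_{12}(e)=\bigl(\begin{smallmatrix}A(e)&B(e)\\C(e)&D(e)\end{smallmatrix}\bigr)$ into its four chiral blocks, with $A\in\Hom(E,\End(S^+_{0,8}))$, $D\in\Hom(E,\End(S^-_{0,8}))$, and $B,C\in\Hom(E,\Hom(S^{\mp}_{0,8},S^{\pm}_{0,8}))$. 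Expanding the suitability identity block by block produces a small system of $\so(E)$-equivariant linear equations, each of which asserts the symmetry in $(e,f)$ of an expression built from Clifford multiplication composed with a single block field.

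\textbf{Step 2 (isotypic analysis).} Decompose the ambient modules via the classical identifications
\[
\End(S^{+}_{0,8})\cong\bR\oplus R(\pi_2)\oplus R(2\pi_3),\qquad \End(S^{-}_{0,8})\cong\bR\oplus R(\pi_2)\oplus R(2\pi_4),
\]
\[
\Hom(S^{\mp}_{0,8},S^{\pm}_{0,8})\cong R(\pi_1)\oplus R(\hat\pi_3),
\]
and use the Klimyk formula to expand $R(\pi_1)\otimes R(\lambda)$ for each relevant $\lambda$. Each block equation then becomes an $\so(E)$-equivariant linear map into a submodule of $\Hom(\Lambda^2 E,X)$ for the appropriate $X$; by Schur's lemma its kernel is a direct sum of isotypic components, and membership of each Cartan component is decided by testing on one highest-weight vector. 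The essential primitive throughout is the Clifford multiplication map $R(\pi_1)\otimes R(\pi_3)\to R(\pi_4)$ (and its triality analogue), which is precisely the projection onto the non-Cartan summand of $R(\pi_1)\otimes R(\pi_3)=R(\pi_1+\pi_3)\oplus R(\pi_4)$. Assembling the surviving components across the four blocks yields the eight irreducibles listed in the statement.

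\textbf{Main obstacle.} The delicate point is the explicit identification of kernel components, and in particular the asymmetry in the decomposition between $R(2\pi_3)$ (absent) and $R(2\pi_4)$ (present), which is the triality-trace of the sign choice $\pm$ in Theorem~\ref{0p} and must be tracked scrupulously. The cleanest way to perform this bookkeeping is to use the octonionic realization $S^{\pm}_{0,8}\cong\bO$ arising from the $\Spin(7)$-branching: Clifford multiplication becomes octonion multiplication, and the suitability identities reduce to explicit identities in $\bO$ that can be checked by hand on highest-weight vectors, thereby determining precisely which Cartan components of $R(\pi_1)\otimes R(2\pi_3)$, $R(\pi_1)\otimes R(2\pi_4)$, and $R(\pi_1)\otimes R(\hat\pi_3)$ survive.
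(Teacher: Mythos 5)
Your overall strategy is the one the paper itself uses: the suitability condition is rewritten as membership in the kernel of $\so(0,8)$-equivariant linear maps obtained by projecting onto chiral components (these are exactly the three maps (\ref{11}), (\ref{22}), (\ref{33}); your four-block formulation is the same thing, the $(S^{-})^{*}\otimes S^{-}$ component being vacuous), the ambient module is decomposed via $S^{*}\otimes S\cong\Lambda E$, $S^{+}\otimes S^{-}\cong E\oplus\Lambda^{3}E$, $S^{\pm}\otimes S^{\pm}\cong\bR\oplus\Lambda^{2}E\oplus\Lambda^{4}_{\pm}E$, and one then tests explicit vectors in each irreducible constituent. Your observation that the condition is independent of $h_{3}$, so that one may take $\rho_{11}(e)=-h_{1}e\circ\pi^{-}$, $\rho_{22}(e)=h_{1}e\circ\pi^{+}$, is also correct and is implicit in the paper.

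The genuine gap is that the decisive computation is never carried out: you assert that ``assembling the surviving components \ldots yields the eight irreducibles listed,'' but identifying which components survive is the entire content of Proposition \ref{S}, and you yourself flag it as the main obstacle without resolving it (the octonionic model is proposed, not used). Two points make this more than routine bookkeeping. First, several irreducibles occur with multiplicity greater than one in $E^{*}\otimes\Lambda E$ (for instance $R(\pi_{2})$ inside $E^{*}\otimes\Lambda^{3}E$ and $E^{*}\otimes\Lambda^{7}E$, and likewise $R(\pi_{1})$, $R(\pi_{1}+\pi_{2})$, $R(\hat{\pi}_{3})$ in complementary degrees); there Schur's lemma only reduces the problem to a linear map between multiplicity spaces, and ``testing one highest-weight vector per component'' does not decide the kernel --- one must compute the images of generators of all copies and find the correct linear combinations. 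This is exactly what the paper does with its explicit generators and the diagonal embeddings recorded in the two diagrams, and it is why only a specific diagonal copy of $R(\pi_{2})\subseteq E^{*}\otimes(\Lambda^{3}E\oplus\Lambda^{7}E)$ is suitable. Second, the asymmetry you correctly single out ($R(2\pi_{4})$ survives while $R(2\pi_{3})$ does not, tied to the sign choice in Theorem \ref{0p}) is precisely the kind of statement that can only be settled by such an explicit evaluation. Until that case-by-case verification (or its octonionic equivalent) is actually performed, the proposal is a correct plan coinciding with the paper's method, but not a proof of the stated decomposition.
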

\begin{proof}
Here we denote by $S=S_{0,8}$ the spin module $S_{0,8}$ in Euclidean signature.
The condition of suitability for $\rho_{12}\in E^{*}\otimes S^{*}\otimes S$ can be translated to be in the kernel of the following three $\so(0,8)$-invariant linear maps:
$$
E^{*}\otimes S^{*}\otimes S\longrightarrow \Lambda^{2}E^{*}\otimes (S^{+})^{*}\otimes S^{+}\phantom{ccccccccccccccccccccccccccccccccccc}
$$
\be
\label{11}
\rho_{12}\longrightarrow \pi^{+}\circ(e\wedge f\rightarrow \rho_{12}(e)\circ f\cdot-\rho_{12}(f)\circ e\cdot- e\cdot\circ \rho_{12}(f)+f\cdot\circ\rho_{12}(e))|_{S_{+}}
\ee
$$
E^{*}\otimes S^{*}\otimes S\longrightarrow \Lambda^{2}E^{*}\otimes (S^{+})^{*}\otimes S^{-}\phantom{ccccccccccccccccccccccccccccccccccc}
$$
\be
\label{22}
\rho_{12}\longrightarrow \pi^{-}\circ(e\wedge f\rightarrow \rho_{12}(e)\circ f\cdot-\rho_{12}(f)\circ e\cdot)|_{S^{+}}\phantom{ccc}
\ee
$$
E^{*}\otimes S^{*}\otimes S\longrightarrow \Lambda^{2}E^{*}\otimes (S^{-})^{*}\otimes S^{+}\phantom{ccccccccccccccccccccccccccccccccccc}
$$
\be
\label{33}
\rho_{12}\longrightarrow \pi^{+}\circ(e\wedge f\rightarrow e\cdot\circ \rho_{12}(f)-f\cdot\circ\rho_{12}(e))|_{S_{-}}\phantom{ccc}
\ee
From \cite{ACDP} we know that 
$$\Cl_{0,8}\cong S^{*}\otimes S\cong \Lambda E\quad,\quad S^{+}\otimes S^{-}\cong E\oplus \Lambda^{3}E\quad,\quad S^{+}\otimes S^{+}\cong \bR\oplus\Lambda^{2}E\oplus\Lambda^{4}_{+}E$$
from which 
$$E^{*}\otimes S^{*}\otimes S\cong E^{*}\otimes\Lambda E$$
$$\Lambda^{2}E^{*}\otimes (S^{+})^{*}\otimes S^{+}\cong\Lambda^{2}E^{*}\oplus(\Lambda^{2}E^{*}\otimes\Lambda^{2}E)\oplus(\Lambda^{2}E^{*}\otimes\Lambda^{4}_{+}E)$$
$$\Lambda^{2}E^{*}\otimes (S^{-})^{*}\otimes S^{+}\cong (\Lambda^{2}E^{*}\otimes E)\oplus(\Lambda^{2}E^{*}\otimes \Lambda^{3}E)$$
and from the tables in \cite{OV} it is possible to further decompose these modules into irreducible components. In the next pages we draw two diagrams which exploit the kernel of the first two maps. On the left hand side there are all the irreducible modules of the domain $E^{*}\otimes\Lambda E$ divided in eight blocks corresponding to the decomposition $\Lambda E=\sum_{i=1}^{8}\Lambda^{i} E$. Similarly on the right for the codomain. According to the action of the map there are arrows which send a module on the left to the corresponding module on the right and the absence of arrows means that the module on the left is contained in the kernel of the map. We explicitly give a generator for all the irreducible modules of the domain:
$$
e_{1}^{*}\in R(\pi_{1})\subseteq E^{*}\otimes\bR
$$
$$
e_{1}^{*}\otimes e_{1}+\cdot\cdot\cdot +e_{8}^{*}\otimes e_{8}\in \bR\subseteq E^{*}\otimes E
$$
$$
e_{1}^{*}\otimes e_{1}\in R(2\pi_{1})\subseteq E^{*}\otimes E
$$
$$
e_{1}^{*}\otimes e_{2}-e_{2}^{*}\otimes e_{1}\in R(\pi_{2})\subseteq E^{*}\otimes E
$$
$$
e_{2}^{*}\otimes e_{1}\wedge e_{2}+\cdot\cdot\cdot +e_{8}^{*}\otimes e_{1}\wedge e_{8}\in R(\pi_{1})\subseteq E^{*}\otimes\Lambda^{2}E
$$
$$
e_{1}^{*}\otimes e_{2}\wedge e_{3}\in R(\pi_{1}+\pi_{2})\subseteq E^{*}\otimes\Lambda^{2}E
$$
$$
e_{1}^{*}\otimes e_{2}\wedge e_{3}+e_{3}^{*}\otimes e_{1}\wedge e_{2}+e_{2}^{*}\otimes e_{3}\wedge e_{1}\in R(\hat{\pi}_{3})\subseteq E^{*}\otimes\Lambda^{2}E
$$
$$
e_{3}^{*}\otimes e_{1}\wedge e_{2}\wedge e_{3}+\cdot\cdot\cdot +e_{8}^{*}\otimes e_{1}\wedge e_{2}\wedge e_{8}\in R(\pi_{2})\subseteq E^{*}\otimes\Lambda^{3}E
$$
$$
e_{1}^{*}\otimes e_{2}\wedge e_{3}\wedge e_{4}\in R(\pi_{1}+\hat{\pi}_{3})\subseteq E^{*}\otimes\Lambda^{3}E
$$
$$
e_{1}^{*}\otimes e_{2}\wedge e_{3}\wedge e_{4}+\cdot\cdot +e_{4}^{*}\otimes e_{1}\wedge e_{2}\wedge e_{3}+
$$
$$
\pm e_{5}^{*}\otimes e_{6}\wedge e_{7}\wedge e_{8}\pm\cdot\cdot \pm e_{8}^{*}\otimes e_{5}\wedge e_{6}\wedge e_{7}\wedge e_{8}\in\Lambda^{4}_{\pm}E\subseteq E^{*}\otimes \Lambda^{3}E
$$
$$
e_{1}^{*}\otimes e_{2}\wedge e_{3}\wedge e_{4}\wedge e_{5}+\cdot\cdot\cdot+e_{2}^{*}\otimes e_{3}\wedge e_{4}\wedge e_{5}\wedge e_{1}\in R(\hat{\pi}_{5})\subseteq E^{*}\otimes\Lambda^{4}E$$
$$
e_{4}^{*}\otimes e_{1}\wedge e_{2}\wedge e_{3}\wedge e_{4}+\cdot\cdot\cdot + e_{8}^{*}\otimes e_{1}\wedge e_{2}\wedge e_{3}\wedge e_{8}\in R(\hat{\pi}_{3})\subseteq E^{*}\otimes\Lambda^{4}E
$$
$$
e_{1}^{*}\otimes e_{2}\wedge e_{3}\wedge e_{4}\wedge e_{5}\in R(\pi_{1}+\hat{\pi}_{4})\subseteq E^{*}\otimes\Lambda^{4}E
$$
and the ones of the remaining irreducible submodules of $E^{*}\otimes\Lambda E\cong E^{*}\otimes\Cl_{0,8}$ are obtained through "Poincar\'e duality", \textit{i.e.} through the equivariant isomorphism
$
E^{*}\otimes\Cl_{0,8}\rightarrow E^{*}\otimes\Cl_{0,8}
$
given by
$
e^{*}\otimes c\rightarrow e^{*}\otimes c\omega_{0,8}
$
for every $c\in\Cl_{0,8}$.
The first diagram shows the kernel of the first map: notice that there is an irreducible module
which comes from a diagonal embedding in the domain given by
$$
R(\pi_{2})\longrightarrow R(\pi_{2})\oplus R(\pi_{2})\subseteq E^{*}\otimes(\Lambda^{3}E\oplus \Lambda^{7}E)
$$
The two generators of these two copies of $R(\pi_{2})$ map to the same generator of $R(\pi_{2})\subseteq \Lambda^{2}E^{*}\otimes\Lambda^{2}E$ and this shows how to construct the embedding. From the second diagram we see that this module is in the kernel of the second map and, by direct calculation, also of the third map. This module is then made up of suitable morphisms. Then we study the second map and we define diagonal embeddings
as before when needed. Similar considerations and an explicit calculation of the image of the various generators imply the result of the proposition. More explicitly the module of suitable morphisms is made up of
$$
R(\pi_{1})\subseteq E^{*}\otimes (\Lambda^{0}E\oplus\Lambda^{8}E)
$$
$$
\bR\oplus R(2\pi_{1})\subseteq E^{*}\otimes E
$$
$$
R(\pi_{1}+\pi_{2})\oplus R(\hat{\pi}_{3})\oplus R(\pi_{1})\subseteq E^{*}\otimes (\Lambda^{2}E\oplus\Lambda^{6}E)
$$
$$
R(\pi_{2})\subseteq E^{*}\otimes (\Lambda^{3}E\oplus\Lambda^{7}E)
$$ 
$$
R(2\pi_{4})\subseteq E^{*}\otimes\Lambda^{5}E\qquad.
$$
\end{proof}
\newpage
Diagram of the map (\ref{11}):
$$
\xymatrix@C=1pc@R=0pc{
R(\pi_{1}) & & & & & & & & & & &\\
& & & & & & & & & & &\\
& & & & & & & & & & &\\
& & & & & & & & & & &\\
\bR & & & & & & & & & & & \\
R(2\pi_{1}) & & & & & & & & & & & \\
R(\pi_{2}) \ar@/_/[rrrrrrrrrrr]& & & & & & & & & & & R(\pi_{2})\\
& & & & & & & & & & &\\
& & & & & & & & & & &\\
& & & & & & & & & & &\\
R(\pi_{1})& & & & & & & & & & & \\
R(\pi_{1}+\pi_{2}) & & & & & & & & & & & \\
R(\hat{\pi}_{3}) & & & & & & & & & & & \\
& & & & & & & & & & & \\
& & & & & & & & & & & R(\pi_{1}+\hat{\pi}_{3})\\
& & & & & & & & & & & \bR\\
R(\pi_{1}+\hat{\pi}_{3})\ar@/^/[dddddddddddddrrrrrrrrrrr] & & & & & & & & & & & R(2\pi_{1})\\
R(\pi_{2})\ar@/_/[rrrrrrrrrrr]& & & & & & & & & & & R(\pi_{2})\\
R(2\pi_{3})\ar@/_/[rrrrrrrrrrr]& & & & & & & & & & & R(2\pi_{3})\\
R(2\pi_{4})\ar@/_/[rrrrrrrrrrr]& & & & & & & & & & & R(2\pi_{4})\\
& & & & & & & & & & &R(2\pi_{2})\\
& & & & & & & & & & &\\
& & & & & & & & & & &\\
R(\hat{\pi}_{3}) & & & & & & & & & & & \\
R(\hat{\pi}_{5}) & & & & & & & & & & & \\
R(\pi_{1}+\hat{\pi}_{4}) & & & & & & & & & & & \\
& & & & & & & & & & &\\
& & & & & & & & & & &\\
& & & & & & & & & & &\\
R(\pi_{1}+\hat{\pi}_{3})\ar@(r,ul)[uuuuuuuuuuuuuuurrrrrrrrrrr] & & & & & & & & & & & R(\pi_{1}+\hat{\pi}_{3})\\
R(\pi_{2})\ar@/_/[rrrrrrrrrrr]& & & & & & & & & & & R(\pi_{2})\\
R(2\pi_{3})\ar@/_/[rrrrrrrrrrr]& & & & & & & & & & & R(2\pi_{3})\\
R(2\pi_{4})& & & & & & & & & & & R(2\pi_{3}+\pi_{2})\\
& & & & & & & & & & & \\
& & & & & & & & & & & \\
& & & & & & & & & & & \\
R(\pi_{1})& & & & & & & & & & &  \\
R(\pi_{1}+\pi_{2}) & & & & & & & & & & &\\
R(\hat{\pi}_{3})& & & & & & & & & & & \\
& & & & & & & & & & & \\
& & & & & & & & & & & \\
& & & & & & & & & & &  \\
\bR \ar@(r,ul)[uuuuuuuuuuuuuuuuuuuuuuuuuuurrrrrrrrrrr]&&&&&&&&&&& \\
R(2\pi_{1})\ar@(r,ul)[uuuuuuuuuuuuuuuuuuuuuuuuuuurrrrrrrrrrr] & & & & & & & & & & & \\
R(\pi_{2})\ar@(r,ul)[uuuuuuuuuuuuuuuuuuuuuuuuuuurrrrrrrrrrr]& & & & & & & & & & & \\
& & & & & & & & & & & \\
& & & & & & & & & & & \\
& & & & & & & & & & & \\
R(\pi_{1})& & & & & & & & & & & \\
}$$
\newpage
Diagram of the map (\ref{22}):
$$
\xymatrix@C=1pc@R=0pc{
R(\pi_{1})\ar@(l,dl)[ddddddddddddrrrrrrrrrrr] & & & & & & & & & & &\\
& & & & & & & & & & &\\
& & & & & & & & & & &\\
& & & & & & & & & & &\\
\bR & & & & & & & & & & & \\
R(2\pi_{1}) & & & & & & & & & & & \\
R(\pi_{2}) & & & & & & & & & & & \\
& & & & & & & & & & &\\
& & & & & & & & & & &\\
& & & & & & & & & & &\\
R(\pi_{1}+\pi_{2})\ar@/_/[rrrrrrrrrrr]\ar@(r,d)[ddddddddddddddddddddddddddddrrrrrrrrrrr] & & & & & & & & & & & R(\pi_{1}+\pi_{2})\\
R(\hat{\pi}_{3}) \ar@/_/[rrrrrrrrrrr]\ar@(u,u)[dddddddddddddddddddddddddrrrrrrrrrrr] & & & & & & & & & & &R(\hat{\pi_{3}}) \\
R(\pi_{1})\ar@/_/[dddddddddddddddddddddddddrrrrrrrrrrr]\ar@/_/ [rrrrrrrrrrr]& & & & & & & & & & & R(\pi_{1})\\
& & & & & & & & & & & \\
& & & & & & & & & & &\\
& & & & & & & & & & &\\
R(\pi_{2}) & & & & & & & & & & & \\
R(2\pi_{3}) & & & & & & & & & & & \\
R(2\pi_{4}) & & & & & & & & & & & \\
R(\pi_{1}+\hat{\pi}_{3}) & & & & & & & & & & & \\
& & & & & & & & & & &\\
& & & & & & & & & & &\\
& & & & & & & & & & &\\
R(\hat{\pi}_{3}) \ar@(u,u)[dddddddddddddrrrrrrrrrrr]& & & & & & & & & & & \\
R(\hat{\pi}_{5})\ar@(u,l)[dddddddddddddddrrrrrrrrrrr] & & & & & & & & & & & \\
R(\pi_{1}+\hat{\pi}_{4})\ar@(u,l)[dddddddddddddddrrrrrrrrrrr] & & & & & & & & & & & \\
& & & & & & & & & & &\\
& & & & & & & & & & &\\
& & & & & & & & & & &\\
R(\pi_{1}+\hat{\pi}_{3})  & & & & & & & & & & & \\
R(\pi_{2})& & & & & & & & & & &\\
R(2\pi_{3})& & & & & & & & & & &\\
R(2\pi_{4})& & & & & & & & & & &\\
& & & & & & & & & & & \\
& & & & & & & & & & & \\
& & & & & & & & & & & \\
R(\hat{\pi}_{3}) \ar@(r,ul)[uuuuuuuuuuuuuuuuuuuuuuuuurrrrrrrrrrr] \ar@/_/[rrrrrrrrrrr]& & & & & & & & & & & R(\hat{\pi}_{3})\\
R(\pi_{1}) \ar@(r,d)[uuuuuuuuuuuuuuuuuuuuuuuuurrrrrrrrrrr] \ar@/_/[rrrrrrrrrrr]& & & & & & & & & & &  R(\pi_{1})\\
R(\pi_{1}+\pi_{2}) \ar@(r,ul)[uuuuuuuuuuuuuuuuuuuuuuuuuuuurrrrrrrrrrr] \ar@/_/[rrrrrrrrrrr]& & & & & & & & & & & R(\pi_{1}+\pi_{2})\\
& & & & & & & & & & & R(\hat{\pi}_{5})\\
& & & & & & & & & & & R(\pi_{1}+\hat{\pi}_{4})\\
& & & & & & & & & & & R(\pi_{2}+\hat{\pi}_{3}) \\
\bR & & & & & & & & & & & \\
R(2\pi_{1}) & & & & & & & & & & & \\
R(\pi_{2}) & & & & & & & & & & & \\
& & & & & & & & & & & \\
& & & & & & & & & & & \\
& & & & & & & & & & & \\
R(\pi_{1})\ar@(r,d)[uuuuuuuuuuuuuuuuuuuuuuuuuuuuuuuuuuuurrrrrrrrrrr]& & & & & & & & & & & \\
}$$
To complete the analysis of all possible non-zero charge superizations of the Heisenberg algebra in dimension $\dim E=n-1=8$ we need to examine all possible brackets between odd elements. In this case we have the following proposition.
\begin{proposition}
If $\dim E=n-1=8$, every non-zero charge superization of the Heisenberg algebra $\gheis_{b}=E^{*}+E+\bR\mathrm{p}$ is odd commutative.
\end{proposition}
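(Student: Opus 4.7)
The plan is to show that the symmetric bracket $[\cdot,\cdot]:S\vee S\to\gheis_b$ of any non-zero charge superization vanishes identically, by combining the super-Jacobi identity (with one even and two odd entries) with the odd Jacobi identity on three odd entries. The key structural input, from Theorem \ref{0p}, is that $\rho(\p)=2\sqrt{2}\,h_1\,\iota$ where $\iota=\pi^+|_{S_+}$ (with the ``$+$'' sign chosen without loss of generality) is an isomorphism $S_+^+\to S_-^+$ killing the summands $S_-$ and $S_+^-$ of $S=S_-^++S_-^-+S_+^++S_+^-$.

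First I would exploit the centrality of $\p$: the super-Jacobi forces $[\rho(\p)Q,\tilde{Q}]+[Q,\rho(\p)\tilde{Q}]=0$, which immediately kills the brackets $[S_-^+,S_-]$ and $[S_-^+,S_+^-]$ and constrains the bracket on $S_-^+\otimes S_+^+$ to be skew under the $\iota$-identification. Next I apply the odd Jacobi $\rho([Q,Q])Q=0$ to an arbitrary $Q\in S_+^+$ and decompose it along the splitting of $S$ using the explicit form of $\rho$ in Theorem \ref{0p}. The $S_+^-$-component reduces to $h_1\,X_E\cdot Q=0$, where $X_E\in E$ is the $E$-component of $X=[Q,Q]$; since $h_1\neq 0$ and Clifford multiplication by any non-zero vector of $E\cong\bR^{0,8}$ is injective on spinors, this forces $X_E=0$. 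The $S_-^-$- and $S_-^+$-components of the same equation then force $X_{E^*}=0$ and $X_\p=0$ (using the invertibility of $B$ together with Clifford injectivity, respectively $h_1\neq 0$). Hence $[Q,Q]=0$ for every $Q\in S_+^+$ and, by polarization, $[S_+^+,S_+^+]=0$.

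The remaining brackets disappear by a short bootstrap. Setting $Q_1=Q_2=Q\in S_+^+$ and $Q_3=\tilde{Q}\in S$ arbitrary in the odd Jacobi and using the vanishing of $[Q,Q]$ gives $\rho([Q,\tilde{Q}])Q=0$; the same component analysis applied now to $Y=[Q,\tilde{Q}]$ in place of $X$ yields $Y=0$, so $[S_+^+,S]=0$. Combined with the first step and the symmetry of the bracket, only the brackets among $S_-^-+S_+^-$ could still be non-trivial. For any $Q_1,Q_2\in S_-^-+S_+^-$ and any $Q_3\in S_+^+$, the outer brackets $[Q_2,Q_3]$ and $[Q_3,Q_1]$ vanish, so the odd Jacobi reduces to $\rho([Q_1,Q_2])Q_3=0$; applying once more the $S_+^+$-component analysis forces $[Q_1,Q_2]=0$, so $[S,S]=0$.

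The main obstacle I anticipate is identifying the correct sequence of Jacobi-derived relations that decouple, along the splitting $S=\bigoplus S_\pm^{\pm}$, into four component equations each solvable by Clifford injectivity and the non-vanishing of $h_1$. Once the diagonal implication ``$\rho(X)Q=0$ for non-zero $Q\in S_+^+$ implies $X=0$'' is in hand, the superization's entire bracket collapses via the two-line bootstrap described above; the specific role of the hypothesis $\dim E=8$ is merely to place us in the non-zero charge regime of Theorem \ref{0p}, the argument itself being dimension-agnostic.
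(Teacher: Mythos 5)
Your proof is correct, and it takes a genuinely different route from the paper's. The paper organizes the argument around the bilinear forms $\p(\cdot,\cdot)$, $e_i(\cdot,\cdot)$, $e_i^\flat(\cdot,\cdot)\in\Bil_\bR(S)$ that encode the bracket, decomposes each into $(--)$, $(-+)$, $(++)$ blocks using Lemma \ref{MM}, and then leans heavily on the classification of admissible bilinear forms on $S_{0,8}$: the crucial input is that $h_E$ is, up to scalar, the \emph{only} admissible form with $\tau\sigma=-1$, which kills the $(--)$ and $(-+)$ blocks of $e_i(\cdot,\cdot)$ and $e_i^\flat(\cdot,\cdot)$ before the odd Jacobi is invoked to dispatch the $(++)$ blocks. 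You instead work with the four-fold splitting $S=S_-^++S_-^-+S_+^++S_+^-$ and package the computation into the single implication ``$\rho(X)Q=0$ with $0\neq Q\in S_+^+$ forces $X=0$'', which drops out of the explicit block form of $\rho$ in Theorem \ref{0p}: the $S_+^-$-component isolates $h_1 X_E\cdot Q_+$, the $S_-^+$-component isolates $2\sqrt{2}h_1 X_\p Q_+$, and the $S_-^-$-component isolates $\sqrt{2}BX_{E^*}^\sharp\cdot Q_+$, each killed by Clifford injectivity in definite signature together with $h_1\neq 0$ and the invertibility of $B$ (the $\rho_{12}(X_E)$ contamination in $S_-$ disappears once $X_E=0$ is established first). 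The $\p$-centrality step and the two odd-Jacobi bootstraps then propagate $[S_+^+,S_+^+]=0$ to $[S,S]=0$ without ever touching the admissible-form machinery. Your argument is cleaner and, as you note, works verbatim for any $\dim E\equiv 0\ (\mathrm{mod}\ 8)$, whereas the paper's argument explicitly singles out $\dim E=8$ because the uniqueness of $h_E$ among $\tau\sigma=-1$ forms fails in higher spin dimensions. One small remark: your intermediate observation that $\p$-centrality makes $[\iota\cdot,\cdot]$ skew on $S_+^+\times S_+^+$ is not actually used, since the odd-Jacobi bootstrap independently yields $[S_+^+,S]=0$; it is harmless but dispensable.
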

\begin{proof}
The bracket between two odd elements $Q,\tilde{Q}\in S$ is denoted by
\be
\label{bispinore}
[Q,\tilde{Q}]:=\mathrm{p}(Q,\tilde{Q})_{\mathrm{p}}\mathrm{p}+e_{i}(Q,\tilde{Q})e_{i}+e_{i}^{\flat}(Q,\tilde{Q})e_{i}^{\flat}
\ee
where $\mathrm{p}(\cdot,\cdot), e_{i}(\cdot,\cdot), e_{i}^{\flat}(\cdot,\cdot)\in\Bil_{\bR}(S)$ are symmetric bilinear forms on $S$. Decompose every symmetric bilinear form on $S$ into three parts
(\ref{r--}), (\ref{r-+}), (\ref{r++}) as before. The space $\Bil_{\bR}(S_{0,8})^{\so(0,8)}$ has an admissible basis (see \cite{AC}) given by the bilinear forms $h$ and $h_{E}:=h(E\cdot,\cdot)$ whose invariants are given by $$(\tau,\sigma,\i)(h)=(+1,+1,+1)\qquad,\qquad(\tau,\sigma,\i)(h_{E})=(-1,+1,+1)\qquad.$$
The bilinear form $h$ is a positively defined scalar product on $S_{0,8}$ (see \cite{LM}) satisfying the following property:
$$
h(e_{i}s,e_{j}s)=h(e_{j}e_{i}s,s)=-h(e_{i}e_{j}s,s)=-h(e_{j}s,e_{i}s)=-h(e_{i}s,e_{j}s)=0
$$
for every $1\leq i\lneq j\leq 8$ and $s,t\in S_{0,8}$. We have to write the condition that the bracket (\ref{bispinore}) is $\gheis_{b}$-invariant and that it satisfies the odd Jacobi identity
\be
\label{000}
[Q,[Q,Q]]=0
\ee 
The condition of $E^{*}$-invariance is equivalent to
\be
\label{un}
e_{j}(Q,\tilde{Q})=\sqrt{2}\p(\left(
\begin{array}{c}
e_{j}\cdot Q_{+} \\
0 \\
\end{array}
\right),
\left(
\begin{array}{c}
\tilde{Q}_{-}\\
\tilde{Q}_{+} \\
\end{array}
\right))+\sqrt{2}
\p(\left(
\begin{array}{c}
Q_{-} \\
Q_{+} \\
\end{array}
\right),
\left(
\begin{array}{c}
e_{j}\cdot\tilde{Q}_{+} \\
0 \\
\end{array}
\right))
\ee
\be
\label{du}
0=e_{i}^{\flat}(\left(
\begin{array}{c}
e_{j}\cdot Q_{+} \\
0 \\
\end{array}
\right),
\left(
\begin{array}{c}
\tilde{Q}_{-}\\
\tilde{Q}_{+} \\
\end{array}
\right))+
e_{i}^{\flat}(\left(
\begin{array}{c}
Q_{-} \\
Q_{+} \\
\end{array}
\right),
\left(
\begin{array}{c}
e_{j}\cdot\tilde{Q}_{+} \\
0 \\
\end{array}
\right))=0
\ee
\be
\label{tr}
0=e_{i}(\left(
\begin{array}{c}
e_{j}\cdot Q_{+} \\
0 \\
\end{array}
\right),
\left(
\begin{array}{c}
\tilde{Q}_{-}\\
\tilde{Q}_{+} \\
\end{array}
\right))+
e_{i}(\left(
\begin{array}{c}
Q_{-} \\
Q_{+} \\
\end{array}
\right),
\left(
\begin{array}{c}
e_{j}\cdot\tilde{Q}_{+} \\
0 \\
\end{array}
\right))=0
\ee
for every $1\leq i,j\leq 8$. Equations (\ref{du}), (\ref{tr}) and Lemma \ref{MM} imply that 
$$
(e_{i}^{\flat})_{--}=(e_{i})_{--}=0
$$
and that $(e_{i}^{\flat})_{-+}$ and $(e_{i})_{-+}$ must be proportional to $h_{E}:=h(E\cdot,\cdot)\in\Bil_{\bR}(S_{0,8})$. This assertion {\bf strongly depends} on the fact that, when $\dim E=8$, there is, up to scalar, only one admissible bilinear form on $S_{0,8}$ whose invariants are such that $\tau\sigma=-1$, namely $h_{E}$. Equations (\ref{un}) are equivalent to 
$$
(e_{i})_{-+}(Q_{-},\tilde{Q}_{+})=\sqrt{2}\p_{--}(Q_{-},e_{i}\cdot \tilde{Q}_{+})
$$
$$
(e_{i})_{++}(Q_{+},\tilde{Q_{+}})=\sqrt{2}(\p_{-+}(e_{i}\cdot Q_{+} ,\tilde{Q}_{+})+\p_{-+}(e_{i}\cdot \tilde{Q}_{+},Q_{+}))
$$
and it is easy to see that $(e_{i})_{-+}=\p_{--}=0$. The condition of $\p$-invariance is then equivalent to
$$
\p_{-+}(\pi^{+}Q_{+},\tilde{Q}_{+})+\p_{-+}(\pi^{+}\tilde{Q}_{+},Q_{+})=0
$$
$$
(e_{i}^{\flat})_{-+}(\pi^{+}Q_{+},\tilde{Q}_{+})+(e_{i}^{\flat})_{-+}(\pi^{+}\tilde{Q}_{+},Q_{+})=0
$$
and it is easy to see that $(e_{i}^{\flat})_{-+}=0$ (recall again that $h_{E}$ is, up to scalar, the only invariant form with invariants $(\tau,\sigma)$ such that $\tau\sigma=-1$). The non-linear condition (\ref{000}) can then be translated into two equations
\be
\sum_{i}(e_{i})_{++}(Q_{+},Q_{+})(h_{3}(e_{i})+2h_{1}e_{i}\pi^{+})Q_{+}=0
\ee
$$
2(2\sqrt{2}h_{1}\p(Q,Q)\pi^{+}+\sqrt{2}b_{i}(e_{i}^{\flat})_{++}(Q_{+},Q_{+})e_{i}+
$$
\be
\label{pau}
(e_{i})_{++}(Q_{+},Q_{+})\rho_{12}(e_{i}))\phantom{c}Q_{+}\phantom{c}+
(e_{i})_{++}(h_{3}(e_{i})-2e_{i}\pi^{-})\phantom{c}Q_{-}=0
\ee
The first equation can be split according to $(\ref{subito})$:
\be
\label{splitu}
0=\sum_{i}(e_{i})_{++}(Q_{+},Q_{+})h_{3}(e_{i})Q_{+}^{+}\in S_{0,8}^{+}
\ee
\be
\label{splitd}
0=\sum_{i}(e_{i})_{++}(Q_{+},Q_{+})h_{3}(e_{i})Q_{+}^{-}+\sum_{i}(e_{i})_{++}(Q_{+},Q_{+})2h_{1}e_{i}Q_{+}^{+}=0\in S_{0,8}^{-}
\ee
Taking the scalar product of (\ref{splitu}) with $Q_{+}^{+}$, we get that
$$0=\sum_{i}(e_{i})_{++}(Q_{+},Q_{+})h_{3}(e_{i})$$
and then, taking the scalar product of (\ref{splitd}) with $e_{j}Q_{+}^{+}$ for a fixed $1\leq j\leq n-1$, we get that
$$
0=\sum_{i}(e_{i})_{++}(Q_{+},Q_{+})2h_{1}h(e_{i}Q_{+}^{+},e_{j}Q_{+}^{+})=(e_{j})_{++}(Q_{+},Q_{+})2h_{1}h(e_{j}Q_{+}^{+},e_{j}Q_{+}^{+})
$$
which is equivalent to $(e_{j})_{++}=0$. Summarizing we get that
$$
(e_{i})_{--}=(e_{i})_{-+}=(e_{i})_{++}=(e_{i}^{\flat})_{--}=(e_{i}^{\flat})_{-+}=\p_{--}=0
$$
$$
\p_{-+}(\pi^{+}Q_{+},\tilde{Q}_{+})+\p_{-+}(\pi^{+}\tilde{Q}_{+},Q_{+})=0=\p_{-+}(e_{i}\cdot Q_{+},\tilde{Q}_{+})+\p_{-+}(e_{i}\cdot \tilde{Q}_{+},Q_{+})
$$
$$
(2h_{1}\p(Q,Q)\pi^{+}+b_{i}(e_{i}^{\flat})_{++}(Q_{+},Q_{+})e_{i})\phantom{c}Q_{+}=0
$$
In particular, setting $Q_{-}=0$, last equation implies that
$$
(2h_{1}\p_{++}(Q_{+},Q_{+})\pi^{+}+b_{i}(e_{i}^{\flat})_{++}(Q_{+},Q_{+})e_{i})\phantom{c}Q_{+}=0
$$
and this equation can be split according to $(\ref{subito})$. The term belonging to $S_{0,8}^{-}$ is given by
$$
b_{i}(e_{i}^{\flat})_{++}(Q_{+},Q_{+})e_{i}\cdot Q_{+}^{+}=0
$$
and taking the scalar product of it with $e_{j}Q_{+}^{+}$, for a fixed $1\leq j\leq n-1$, we get that
$$
0=b_{i}(e_{i}^{\flat})_{++}(Q_{+},Q_{+})h(e_{i}\cdot Q_{+}^{+},e_{j}Q_{+}^{+})=b_{j}(e_{j}^{\flat})_{++}(Q_{+},Q_{+})h(e_{j}\cdot Q_{+}^{+},e_{j}Q_{+}^{+})
$$
from which $(e_{j}^{\flat})_{++}=0$. Taking into account the obtained results, equation (\ref{pau}) implies that $\p=0$. The proposition is thus proved. 
\end{proof}
There was no need to consider the condition of $E$-invariance for the bracket between odd elements. An important ingredient of the proof was indeed the fact that there is only one, up to scalar, admissible bilinear form on $S_{0,8}$ whose invariants are such that $\tau\sigma=-1$. This implies that $[S,S]\subseteq E^{*}+\bR\p$ and drastically simplifies the equations.
\section{Spin representation of the Cahen-Wallach algebra}
\label{dan}
\setcounter{equation}{0}
This section considers extension of spin representations of the Heisenberg algebra $\gheis_{b}=E^{*}+E+\bR\p$ to representations
\be
\label{dor}
\rho:\gg_{b}\rightarrow\ggl_{\bR}(S)
\ee
of the CW algebra $$\gg_{b}=(E^{*}+E+\bR\p)+\bR\q=\gheis_{b}+\bR\q$$ defined by the symmetric endomorphism $B\in\End_{\bR}(E)$. To this end we specify the image 
$$
\rho(\q)=\left(
\begin{array}{cc}
\q_{11} & \q_{12} \\
\q_{21} & \q_{22} \\
\end{array}
\right)
$$
of the extra element $\q\in\gg_{b}$ with respect to the decomposition (\ref{dec}). Firstly Lemmas \ref{ne}, \ref{reg} give some necessary conditions so that a spin representation of the Heisenberg algebra can be extended to a representation of the CW algebra. In the zero charge case, Theorem \ref{iiii} shows that spin representations (\ref{dor}) are determined by solutions of a quadratic equation in the Clifford algebra $\Cl_{0,n-1}$. 
The complete understanding of this equation is far from being reached.
\subsection{Zero charge representation}\hfill
\begin{lemma}
\label{ne}
For any zero charge spin representation of a CW algebra the endomorphisms $\q_{21}\in\End_{\bR}(S_{0,n-1})$ and $\rho_{11}\in\Hom_{\bR}(E,\cC_{0,n-1})$ are zero and 
\be
\label{ciao}
\rho_{12}(e)=\sqrt{2}(e\cdot\circ\phantom{c}\q_{22}-\q_{11}\circ e\cdot)
\ee
\end{lemma}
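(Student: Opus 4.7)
The plan is to extend the zero-charge spin representation of $\gheis_{b}$ produced by Theorem \ref{0c} by prescribing
\[
\rho(\q)=\begin{pmatrix}\q_{11}&\q_{12}\\ \q_{21}&\q_{22}\end{pmatrix}
\]
with respect to the decomposition (\ref{dec}), and then to unpack the CW bracket $[e^{\flat},\q]=Be$ entry by entry. Using the shape of $\rho(e^{\flat})$ given by (\ref{spinrapr}) and of $\rho(Be)$ given by Theorem \ref{0c} (with charge zero), the identity $[\rho(e^{\flat}),\rho(\q)]=\rho(Be)$ translates into three non-trivial block relations; writing $f:=Be\in E$, which ranges over all of $E$ since $B$ is invertible, the $(1,1)$-, $(2,2)$- and $(1,2)$-entries give
\begin{align*}
\text{(a)}&\quad \rho_{11}(f)=\sqrt{2}\,f\cdot\circ\q_{21},\\
\text{(b)}&\quad \overline{\rho_{11}}(f)=-\sqrt{2}\,\q_{21}\circ f\cdot,\\
\text{(c)}&\quad \rho_{12}(f)=\sqrt{2}\bigl(f\cdot\circ\q_{22}-\q_{11}\circ f\cdot\bigr),
\end{align*}
while the $(2,1)$-entry is automatically satisfied. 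Formula (c) is precisely (\ref{ciao}), so all that remains is to show $\q_{21}=0$; then (a) automatically yields $\rho_{11}=0$.

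The key tool will be the graded commutation identity
\[
f\cdot\circ\rho_{11}(f)=\overline{\rho_{11}}(f)\circ f\cdot\qquad\text{for every}\ f\in E,
\]
which follows from the decomposition $\rho_{11}(f)=\rho_{11}^{\circ}(f)+\rho_{11}^{1}(f)$ along the $\bZ_{2}$-grading $\cC_{0,n-1}=\cC_{0,n-1}^{\circ}\oplus\cC_{0,n-1}^{1}$: by definition elements of $\cC_{0,n-1}^{\circ}$ commute with each Clifford action $f\cdot$ while elements of $\cC_{0,n-1}^{1}$ anticommute with it, so pushing $f\cdot$ across $\rho_{11}(f)$ produces precisely $\rho_{11}^{\circ}(f)-\rho_{11}^{1}(f)=\overline{\rho_{11}}(f)$ in view of (\ref{conf}). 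Substituting (a) into the left-hand side and (b) into the right-hand side of this identity and applying $f\cdot\circ f\cdot=-\langle f,f\rangle\,\Id$, one finds
\[
-\sqrt{2}\langle f,f\rangle\,\q_{21}\;=\;f\cdot\circ\rho_{11}(f)\;=\;\overline{\rho_{11}}(f)\circ f\cdot\;=\;\sqrt{2}\langle f,f\rangle\,\q_{21},
\]
so that $\langle f,f\rangle\,\q_{21}=0$. Choosing any $f\in E$ with $\langle f,f\rangle\neq 0$, which exists because $\langle\cdot,\cdot\rangle_{|E}$ is non-degenerate, forces $\q_{21}=0$, whence $\rho_{11}=0$ by (a), and the lemma follows.

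I do not foresee any real obstacle: the argument is a single block expansion of $[\rho(e^{\flat}),\rho(\q)]=\rho(Be)$ followed by the two-line graded manipulation above. The only point requiring attention is the bookkeeping of the Clifford sign convention $v\cdot v=-\langle v,v\rangle$ together with the parity automorphism $\overline{C}=\tau(C)\,C$; these two signs are what produce the crucial change of sign in the chain of equalities that pins down $\q_{21}$.
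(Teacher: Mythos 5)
Your proposal is correct and follows essentially the same route as the paper: you expand $[\rho(e^{\flat}),\rho(\q)]=\rho(Be)$ in blocks with respect to (\ref{dec}), read off (\ref{ciao}) from the $(1,2)$-entry, and kill $\q_{21}$ (hence $\rho_{11}$) by combining the $(1,1)$- and $(2,2)$-entries with the parity relation $f\cdot\circ C=\overline{C}\circ f\cdot$ for $C\in\cC_{0,n-1}$. The only cosmetic difference is that you work with a general $f=Be$ and the relation $f\cdot\circ f\cdot=-\langle f,f\rangle\Id$, while the paper runs the identical argument on the orthonormal $B$-eigenbasis $e_{i}$ with $e_{i}\cdot e_{i}\cdot=\Id$.
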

\begin{proof} 
By Theorem \ref{0c}, any suitable pair defines a zero charge representation (\ref{common}) of $\gheis$. For every $e_{i}\in E$, equation $[e_{i}^{\flat},\q]=b_{i}e_{i}$ implies that
$$
\left(
\begin{array}{cc}
0 & \sqrt{2} e_{i} \\
0 & 0 \\
\end{array}
\right)
\left(
\begin{array}{cc}
\q_{11} & \q_{12} \\
\q_{21} & \q_{22} \\
\end{array}
\right)-
\left(
\begin{array}{cc}
\q_{11} & \q_{12} \\
\q_{21} & \q_{22} \\
\end{array}
\right)
\left(
\begin{array}{cc}
0 & \sqrt{2} e_{i} \\
0 & 0 \\
\end{array}
\right)=
\left(
\begin{array}{cc}
\rho_{11}(e_{i}) & \rho_{12}(e_{i}) \\
0 & \overline{\rho_{11}}(e_{i}) \\
\end{array}
\right)
$$
\textit{i.e.}
$$
\sqrt{2}
\left(
\begin{array}{cc}
e_{i}\cdot\circ \phantom{c}\q_{21} &  e_{i}\cdot\circ\phantom{c} \q_{22}-\q_{11} \circ e_{i}\cdot\\
0 & -\q_{21} \circ e_{i}\cdot \\
\end{array}
\right)
=\left(
\begin{array}{cc}
\rho_{11}(e_{i}) & \rho_{12}(e_{i}) \\
0 & \overline{\rho_{11}}(e_{i}) \\
\end{array}
\right)\quad.
$$
This equation is equivalent to
$$
\left\{\begin{matrix} \sqrt{2}\q_{21}=e_{i}\cdot\circ\phantom{c}\rho_{11}(e_{i})=-\overline{\rho_{11}}(e_{i})\circ e_{i}\cdot\phantom{cc}\\
\rho_{12}(e_{i})=\sqrt{2}(e_{i}\cdot\circ\phantom{c} \q_{22}-\q_{11} \circ e_{i}\cdot)
\end{matrix}\right.
$$
and then
$$
\sqrt{2}\q_{21}=e_{i}\cdot\circ\phantom{c}\rho_{11}(e_{i})=\overline{\rho_{11}}(e_{i})\circ e_{i}\cdot=0
$$
from which $\rho_{11}=0$ and $\q_{21}=0$.
\end{proof}
The following main theorem describe all zero charge spin representations of a CW algebra in terms of triple of elements of the Clifford algebra $\Cl_{0,n-1}$ such that the first two satisfy the following quadratic equation. 
\begin{definition}
\rm{
A pair $(c_{1},c_{2})\in(\Cl_{0,n-1})^{2}$ satisfy the {\bf quadratic Clifford equation} if
\be
\label{QUAD}
i)\pc\pc(L_{c_{1}}-R_{c_{2}})^{2}E\subseteq E\phantom{ccc},\phantom{ccc}ii)\pc\pc(L_{c_{1}}-R_{c_{2}})^{2}|_{E}\equiv -B
\ee
where $L_{c}\in\End_{\bR}(\Cl_{0,n-1})$ and $R_{c}\in\End_{\bR}(\Cl_{0,n-1})$ are the operators of left and right multiplication by $c\in\Cl_{0,n-1}$.
}\end{definition}
\begin{theorem}
\label{iiii}
Let $$(c_{1},c_{2})\in(\Cl_{0,n-1})^{2}$$ be a solution of the {\bf quadratic Clifford equation} (\ref{QUAD})
and $c_{3}\in\Cl_{0,n-1}$. Every such a triple defines a zero charge spin representation of a $(n+1)$-dimensional $CW$ algebra $$\gg_{b}=\gh+\gm=E^{*}+(E+\bR\p+\bR\q)$$ given by
$$
\phantom{c}\rho(e^{\flat})=\sqrt{2}\begin{pmatrix} 0 & Be \\ 0 & 0 \end{pmatrix}\phantom{cccc}\quad,\quad
\rho(\mathrm{p})=\begin{pmatrix} 0 & 0 \\ 0 & 0 \end{pmatrix}
$$
$$
\rho(e)=\sqrt{2}\begin{pmatrix} 0 & e\cdot\circ\phantom{c} c_{2}-c_{1}\circ\phantom{c}e\cdot\\ 0 & 0 \end{pmatrix}\quad,\quad
\rho(\q)=\begin{pmatrix} c_{1} & c_{3} \\ 0 & c_{2} \end{pmatrix}\qquad.
$$
Every zero charge spin representation is defined by such a triple and two triples define the same representation if they differ by elements of the kernel (if it exists) of the Euclidean spin representation $\rho_{0,n-1}:\Cl_{0,n-1}\rightarrow\End_{\bR}(S_{0,n-1})$.
\end{theorem}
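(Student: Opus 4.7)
The plan is to reduce the theorem to a direct $2\times 2$ block matrix computation via Lemma \ref{ne}. For the forward direction, the formulas visibly extend the prescribed spin representation (\ref{spinrapr}) of $E^{*}$ and give zero charge $\rho(\p)=0$, so what remains is to check preservation of the CW brackets $[\q,e]=e^{\flat}$, $[e^{\flat},\q]=Be$, $[e,f]=0$, $[e^{\flat},f^{\flat}]=0$, and $[e^{\flat},f]=b(e,f)\p$. Because both $\rho(e)$ and $\rho(e^{\flat})$ are strictly upper triangular with respect to the decomposition (\ref{dec}), all of their pairwise products vanish, so the last three identities hold automatically (the last using $\rho(\p)=0$). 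The identity $[e^{\flat},\q]=Be$ is equally tautological: the block commutator produces $\sqrt{2}\begin{pmatrix}0 & Be\cdot c_{2}-c_{1}\cdot Be\\ 0 & 0\end{pmatrix}$, which is by definition $\rho(Be)$.

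The essential computation is then the bracket $[\q,e]=e^{\flat}$. Expanding the block commutator of $\rho(\q)$ with $\rho(e)$, the only nonzero block is
$$c_{1}(ec_{2}-c_{1}e)-(ec_{2}-c_{1}e)c_{2}\;=\;-\bigl(c_{1}^{2}e-2c_{1}ec_{2}+ec_{2}^{2}\bigr)\;=\;-(L_{c_{1}}-R_{c_{2}})^{2}e,$$
so matching $\rho(e^{\flat})=\sqrt{2}\begin{pmatrix}0 & Be\\ 0 & 0\end{pmatrix}$ is equivalent to $(L_{c_{1}}-R_{c_{2}})^{2}e=-Be$ on $S_{0,n-1}$. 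This is precisely the content of (\ref{QUAD}), read in $\End_{\bR}(S_{0,n-1})$ rather than literally in $\Cl_{0,n-1}$.

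For the converse, Lemma \ref{ne} already forces any zero charge spin representation into the shape $\rho(\q)=\begin{pmatrix}\q_{11} & \q_{12}\\ 0 & \q_{22}\end{pmatrix}$ with $\rho_{11}=0$ and $\rho_{12}$ determined by (\ref{ciao}). The technical hypothesis $\rho(\gm)\subseteq\rho_{\spin}(\Cl(\gm))$ from Definition \ref{SE} ensures that $\q_{11},\q_{22},\q_{12}$ lie in the image of the Euclidean spin representation $\rho_{0,n-1}$, so one selects lifts $c_{1},c_{2},c_{3}\in\Cl_{0,n-1}$; running the forward computation in reverse, the bracket $[\q,e]=e^{\flat}$ forces $(c_{1},c_{2})$ to satisfy (\ref{QUAD}) modulo $\ker\rho_{0,n-1}$. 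Uniqueness is then immediate, since two triples induce the same $\q_{11},\q_{22},\q_{12}$ exactly when their pairwise differences lie in $\ker\rho_{0,n-1}$.

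The only conceptual subtlety, and the principal bookkeeping obstacle, is the gap between equations in $\Cl_{0,n-1}$ and in $\End_{\bR}(S_{0,n-1})$ when $\rho_{0,n-1}$ is not faithful (the $n-1\equiv 3,7\pmod 8$ cases, where $\Cl_{0,n-1}$ splits as a direct sum of two simple factors). Every algebraic step of the proof takes place one level below in $\End_{\bR}(S_{0,n-1})$, and the final clause of the theorem precisely records the resulting ambiguity when lifting back to $\Cl_{0,n-1}$.
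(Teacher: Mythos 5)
Your proof is correct and follows the same route as the paper: reduce to Lemma \ref{ne}, write the images as $2\times 2$ block matrices with respect to the decomposition (\ref{dec}), and observe that the only nontrivial constraint is the block commutator for $[\q,e]=e^{\flat}$, which is equivalent to (\ref{QUAD}) after the implicit assumption $\rho(\gm)\subseteq\rho_{\spin}(\Cl(\gm))$ is used to lift $\q_{11},\q_{22},\q_{12}$ to $\Cl_{0,n-1}$. You are in fact somewhat more thorough than the paper, which only writes out the derivation of the quadratic condition and leaves the (trivial) verification of the remaining brackets and the forward direction to the reader; you also correctly identify the $n-1\equiv 3,7\pmod 8$ cases as the source of the ambiguity recorded in the theorem's final clause.
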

\begin{proof}
Lemma \ref{ne} and equation $[\q,e]=e^{\flat}$ imply that
\be
\label{def}
\left(
\begin{array}{cc}
0 &  \q_{11}\circ\rho_{12}(e)-\rho_{12}(e)\circ\q_{22}\\
0 & 0 \\
\end{array}
\right)
=\sqrt{2}\left(
\begin{array}{cc}
0 & Be \\
0 & 0 \\
\end{array}
\right)
\ee
and substituting equation (\ref{ciao}) into (\ref{def}) we get the following quadratic condition
$$
\q_{11}\circ e\cdot\circ\phantom{c}\q_{22}-(\q_{11})^{2}\circ e\cdot- e\cdot\circ\phantom{c}(q_{22})^{2}+\q_{11}\circ e\cdot\circ\phantom{c}\q_{22}=Be\cdot\qquad\qquad.
$$
This reduces to equation (\ref{QUAD}) if we remark that any zero charge spin representation (\ref{dor}) of a CW algebra satisfies, by assumption, $\rho(\gm)\subseteq\rho_{\spin}(\Cl(\gm))$. The theorem is thus proved.
\end{proof}
Note that the quadratic Clifford equation makes sense in every Clifford algebra. If the symmetric endomorphism $B\in\End_{\bR}(E)$ is not specified, the second condition of (\ref{QUAD}) is equivalent to the condition that the endomorphism $(L_{c_{1}}-R_{c_{2}})^{2}|_{E}$ is diagonalizable.
Indeed equation (\ref{QUAD}) is invariant under the adjoint action of the Spin group on the Clifford algebra. \\
We do not discuss the general solution of (\ref{QUAD}), which looks very complicated, but only consider three simple types of solutions.\\
\\
{\bf Linear solutions}: This means that the pair $(c_{1},c_{2})\in(\Cl_{0,n-1})^{2}$ satisfies $(L_{c_{1}}-R_{c_{2}})E\subseteq E$. For example, choose $$c_{1}=r_{1}\Id+s_{1}\omega_{0,n-1}\qquad,\qquad c_{2}=r_{2}\Id-s_{1}\omega_{0,n-1}$$ where $r_{1}\neq r_{2}$ are two real numbers. \\
\\
{\bf Half-zero solutions}: The name is self-explanatory. For example, choose $c_{1}=0$ and $c_{2}$ to be, up to a positive scalar, any involution or complex structure. In general these solutions are not linear. \\
\\
{\bf Supergravity solutions}:  These are solutions which consist of proportional decomposable elements of the Clifford algebra, \textit{i.e.}  $$c_{1}=r_{1}e_{I}\qquad,\qquad c_{2}=r_{2}e_{I}$$ 
where $r_{1}\neq\pm r_{2}$ are two real numbers and $e_{I}=e_{i_{1}}\cdot\cdot\cdot e_{i_{|I|}}\in\Cl_{0,n-1}$ is the decomposable form of degree $|I|$ determined by the multi-index $I=(i_{1},...,i_{|I|})$. In general these solutions are not linear.  \\
\\
The last type of solutions is the most interesting one. Solutions of this type appear in $11$-dimensional supergravity (see \cite{FP}), hence the name.\\
One can easily check that linear and half-zero solutions correspond to a scalar endomorphism $B\in\End_{\bR}(E)$, while supergravity solutions correspond to an endomorphism with two different eigenvalues. This could be an indication that the quadratic Clifford equation is solvable only for some symmetric matrix $B$. It would be very interesting to have a deeper understanding of (\ref{QUAD}).  
\subsection{Non-zero charge representation}\hfill\newline\\
This subsection considers the case when the representation of the CW algebra has non-zero charge. This can happen only when semi-spinors exist, \textit{i.e.} when $\dim E=n-1\equiv 0,4,6,7$ (mod $8$). We first prove some general results and then specialize ourselves to the case $\dim E=n-1=8$. 
In this case, 
the theorem \ref{check} proves that spin representations of the CW algebra $\gg_{b}$ exist only when the symmetric endomorphism $B$ is scalar. We describe a solution and we then check that the obtained formula gives solution in any dimension.
\subsubsection{General case}\hfill\newline\\
Assume that the homomorphisms $\rho_{11},\rho_{12},\rho_{22}\in\Hom_{\bR}(E,\End_{\bR}(S_{0,n-1}))$ give a non-zero charge spin representation of the Heisenberg algebra $\gheis_{b}=E^{*}+E+\bR\p$. Recall that $\rho_{11}$ and $\rho_{22}$ are described in terms of elements $h_{1},h_{2},h_{3},h_{4}$ (which are either elements of the even Schur algebra $\cC^{\circ}_{0,n-1}$ or linear maps from $E$ to $\cC_{0,n-1}^{\circ}$) and that $\rho_{12}$ is a suitable map (see Theorems \ref{0p}, \ref{3p}, \ref{2p}, \ref{3d}). An extension of the representation 
\be
\label{grande}
\rho:\gheis_{b}\rightarrow\ggl_{\bR}(S)
\ee
to a representation of a CW algebra (\ref{dor}) can be reduced to finding solutions $\q_{11},\q_{12},\q_{21},\q_{22}\in\End_{\bR}(S_{0,n-1})$ to the following system of equations 
\be
\label{I}
\left\{\begin{matrix} \sqrt{2}\q_{21}=e_{i}\cdot\circ\phantom{c}\rho_{11}(e_{i})=-\rho_{22}(e_{i})\circ e_{i}\cdot\phantom{cc}\\
\rho_{12}(e_{i})=\sqrt{2}(e_{i}\cdot\circ\phantom{c} \q_{22}-\q_{11} \circ e_{i}\cdot)
\end{matrix}\right.
\ee
and
\be
\label{II}
\left(
\begin{array}{cc}
[\q_{11},\rho_{11}(e_{i})] &  \q_{11}\circ\rho_{12}(e_{i})+\q_{12}\circ\rho_{22}(e_{i})\\
-\rho_{12}(e_{i})\circ\q_{21}& -\rho_{11}(e_{i})\circ\phantom{c}\q_{12}-\rho_{12}(e_{i})\circ\q_{22}\\
& \\
\q_{21}\circ\rho_{11}(e_{i})& [\q_{22},\rho_{22}(e_{i})]\\
-\rho_{22}(e_{i})\circ\phantom{c}\q_{21} & +\q_{21}\circ\rho_{12}(e_{i}) \\
\end{array}
\right)
=b_{i}\sqrt{2}\left(
\begin{array}{cc}
0 & e_{i} \\
0 & 0 \\
\end{array}
\right)
\ee
\begin{lemma}
\label{reg}
Assume that a representation (\ref{grande}) of the Heisenberg algebra admits an extension to a representation of the CW algebra. Then
\begin{enumerate}
\item[i)]
If $n-1\equiv 0\pc (mod\pc 8)$, then
$h_{3}\in\Hom_{\bR}(E,\cC_{0,n-1}^{\circ})$
is zero and $$2\sqrt{2}\q_{21}=-h_{1}(\Id\mp E)\quad,$$
\item[ii)]
If $n-1\equiv 4\pc (mod\pc 8)$, then
$h_{3},h_{4}\in\Hom_{\bR}(E,\cC_{0,n-1}^{\circ})$
are zero and $$2\sqrt{2}\q_{21}=-h_{1}(\Id\mp E)\quad,$$
\item[iii)]
If $n-1\equiv 6\pc (mod\pc 8)$, then
$h_{3},h_{4}\in\Hom_{\bR}(E,\cC_{0,n-1}^{\circ})$
are zero and $$2\sqrt{2}\q_{21}=-h_{1}+h_{2}EI\quad,$$
\item[iv)]
If $n-1\equiv 7\pc (mod\pc 8)$, then
$h_{3}\in\Hom_{\bR}(E,\cC_{0,n-1}^{\circ})$
is zero and $$2\sqrt{2}\q_{21}=\pm|h_{4}|-h_{4}J\quad.$$
\end{enumerate}
\end{lemma}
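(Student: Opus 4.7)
The plan is to exploit the first equation of system \eqref{I},
\[
\sqrt{2}\q_{21}=e_{i}\cdot\circ\rho_{11}(e_{i})=-\rho_{22}(e_{i})\circ e_{i}\cdot,
\]
which, being a single endomorphism of $S_{0,n-1}$ independent of $1\leq i\leq n-1$, imposes that the right-hand side be independent of $i$ for every $i$. The four cases of the lemma correspond precisely to the congruence classes $n-1\equiv 0,4,6,7\pc(\mathrm{mod}\pc 8)$ in which non-zero charge spin representations of $\gheis_{b}$ exist, described by Theorems \ref{0p}, \ref{2p}, \ref{3p}, \ref{3d}.

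\textbf{Execution.} In each case I would substitute the explicit expressions for $\rho_{11}(e_{i})$ provided by Propositions \ref{schurm1} and \ref{schu3}, which write this operator as a linear combination of the $i$-independent constants $h_{1},h_{2}\in\cC^{\circ}_{0,n-1}$ and the $i$-dependent quantities $h_{3}(e_{i}),h_{4}(e_{i})\in\cC^{\circ}_{0,n-1}$, multiplied by $1,e_{i},\omega_{0,n-1}$ and, in case (iv), by the (para-)complex structure $J$. I would then compute $e_{i}\cdot\rho_{11}(e_{i})$ using the Clifford relation $e_{i}^{2}=\mp 1$ to contract one factor, Lemma \ref{VOLUME} to move $\omega_{0,n-1}$ past $e_{i}$ (commuting when $n-1$ is odd, anticommuting when $n-1$ is even), and the anticommutation $\{J,e_{i}\cdot\}=0$ from the proof of Proposition \ref{e2} to move $J$ past $e_{i}$. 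The resulting expression splits into an $i$-independent summand, built from $h_{1},h_{2}$ together with $\mathrm{Id},\omega_{0,n-1}$ (or $J$), and an $i$-dependent summand carrying $h_{3}(e_{i})\,e_{i}\cdot$ and $h_{4}(e_{i})$ multiplied by a Clifford word containing $e_{i}$.

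\textbf{Conclusion.} The $i$-dependent summand must vanish for all $i$. Since the spin representation $\rho_{0,n-1}$ is injective on the relevant subalgebra and the endomorphisms $\{e_{i}\cdot\}_{i}$, $\{e_{i}\cdot\omega_{0,n-1}\}_{i}$, $\{J\circ e_{i}\cdot\}_{i}$ are linearly independent on $S_{0,n-1}$, this forces $h_{3}(e_{i})=0$ for all $i$ in every case, and $h_{4}(e_{i})=0$ for all $i$ in cases (ii) and (iii). In case (i) the vanishing $h_{4}=0$ was already recorded in Theorem \ref{0p} (where the Schur algebra is $\bR$ and the anticommutation $\{h_{1},h_{4}\}=0$ with $h_{1}\neq 0$ gives $h_{4}=0$ directly). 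In case (iv), $h_{4}$ cannot vanish --- it is the non-zero charge assumption of Theorem \ref{3d} --- and survives into the final formula as the term $-h_{4}J$; simultaneously the coupling $h_{2}^{i}=\mp_{i}(h_{4}/|h_{4}|)\Im h_{3}(e_{i})$ of Theorem \ref{3d} collapses $h_{2}^{i}$ to zero together with $h_{3}$. What remains of $2\sqrt{2}\q_{21}$ is the $i$-independent piece, which after the reidentifications $\omega_{0,n-1}=E$ in cases (i)--(ii) and $\omega_{0,n-1}=EI$ in case (iii) (read off the admissible basis table of Section \ref{hhyy}), and the substitutions $h_{2}=\pm h_{1}$ (cases (i)--(ii)) and $h_{1}=\pm|h_{4}|$ (case (iv)) already established in the proofs of the corresponding theorems, gives exactly the displayed formulas.

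\textbf{Main obstacle.} The argument is conceptually short but notationally delicate: the crucial bookkeeping is the parity-dependent sign $(-1)^{i+1}$ that is absorbed into the passage from $h_{2}^{i}$ to its $i$-independent rescaling $h_{2}$ (Proposition \ref{schurm1}), and its interplay with the anticommutation of $\omega_{0,n-1}$ with $e_{i}$ when $n-1$ is even. Case (iv) is slightly subtler because $J$ replaces $\omega$ and because the suppression of $h_{3}$ must be tracked through its appearance in $h_{2}^{i}$ as well; I expect this to be the main technical hurdle.
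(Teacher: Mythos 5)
Your strategy matches the paper's almost exactly: both rest on the first equation of system \eqref{I}, namely $\sqrt{2}\q_{21}=e_{i}\cdot\circ\rho_{11}(e_{i})=-\rho_{22}(e_{i})\circ e_{i}\cdot$, together with the explicit expressions for $\rho_{11},\rho_{22}$ coming from Propositions \ref{schurm1} and \ref{schu3}. The paper proves only the case $n-1\equiv 6\pc(\mathrm{mod}\pc 8)$ explicitly and also records that the $(2,1)$-entry of \eqref{II} is automatically satisfied (by Lemma \ref{quat2}), a consistency check you omit; since that entry gives no new constraints, the omission is harmless for the lemma's conclusion.

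The one place where I'd press you is the derivation of $h_{3}=0$ (and $h_{4}=0$ where applicable). You argue from $i$-independence of $\q_{21}$: ``the $i$-dependent summand must vanish for all $i$.'' By itself, $i$-independence only forces $h_{3}(e_{i})e_{i}\cdot + h_{4}(e_{i})e_{i}\omega_{0,n-1}\cdot$ to be some fixed endomorphism $D$ for every $i$; concluding $D=0$ requires a further step (linear independence across indices, or an $\so$-equivariance argument). The paper sidesteps this: it uses \emph{both} equalities $e_{i}\cdot\rho_{11}(e_{i})=-\rho_{22}(e_{i})\circ e_{i}\cdot$ for a single fixed $i$. Substituting and using $\omega_{0,n-1}e_{i}=-e_{i}\omega_{0,n-1}$ (resp.\ $\{J,e_{i}\cdot\}=0$) yields directly $h_{3}(e_{i})\Id+h_{4}(e_{i})\omega_{0,n-1}=0$, whence $h_{3}(e_{i})=h_{4}(e_{i})=0$ by $\tau$-parity since the two summands live in opposite eigenspaces of the parity involution. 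This is cleaner than comparing across $i$'s, and I'd recommend incorporating it; otherwise you should spell out why the constant $D$ must vanish. Two minor points of hygiene: in $\Cl_{0,n-1}$ with the paper's convention $v^{2}=-\langle v,v\rangle$ and $E$ negative definite one has $e_{i}^{2}=+1$, so the hedge $e_{i}^{2}=\mp1$ should be resolved; and in case (iv) the vanishing $h_{2}^{\i}=0$ is already built into Theorem \ref{3d} (it does not ``collapse together with $h_{3}$'' -- $h_{2}^{\i}=-\pm_{\i}(h_{4}^{\i}/|h_{4}^{\i}|)\Im h_{3}^{\i}$ vanishes because $\Im h_{3}^{\i}=0$ was already imposed there; Lemma \ref{reg} further kills the real part of $h_{3}$).
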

\begin{proof}
Consider the first equation of system (\ref{I}) and the $(2,1)$-entry of the matrix (\ref{II}). We prove the lemma only in the case $n-1\equiv 6\pc (\text{mod}\pc 8)$, the other cases are similar. First equation of system $(\ref{I})$ implies that
$$
h_{3}(e_{i})\Id+h_{4}(e_{i})EI=0
$$
\textit{i.e.} $h_{3}=h_{4}=0$ and then
$
2\sqrt{2}\q_{21}=-h_{1}+h_{2}EI
$.
The equation corresponding to the $(2,1)$-entry of the matrix of (\ref{II}) is satisfied. Indeed this entry is proportional to
$
(h_{1}^{2}+h_{2}^{2})e_{i}-2[h_{1},h_{2}]e_{i}EI
$, 
which is zero due to Lemma \ref{quat2}. 
\end{proof}
Making use of Lemma \ref{reg}, re-write the main equations (\ref{I}) and (\ref{II}) as 
\be
\label{n1}
\rho_{12}(e)=\sqrt{2}(e\cdot\circ\phantom{c}\q_{22}-\q_{11}\circ e\cdot)
\ee
\be
\label{n2}
[\q_{11},\rho_{11}(e)]=\rho_{12}(e)\circ\phantom{c}\q_{21}
\ee
\be
\label{n3}
[\q_{22},\rho_{22}(e)]=-\q_{21}\circ\rho_{12}(e)
\ee
\be
\label{n4}
(\q_{11}\circ\rho_{12}(e)-\rho_{12}(e)\circ\phantom{c}\q_{22})+(\q_{12}\circ\rho_{22}(e)-\rho_{11}(e)\circ\phantom{c}\q_{12})=\sqrt{2}Be
\ee
\subsubsection{Case $\dim E=n-1=8$}\hfill\newline\\
Let us restrict ourselves to the case $\dim E=n-1=8$.
Theorem \ref{0p} and Lemma \ref{reg} show that non-zero charge spin representations of the Heisenberg algebra on the spin module $S_{1,9}$ are given by
$$
\phantom{cccccccccccccc}\rho(e^{\flat})=\sqrt 2\begin{pmatrix} 0 & Be \\ 0 & 0 \end{pmatrix}\phantom{c}
,\phantom{c}
\rho(e)=\begin{pmatrix} -h_{1}e\circ\pi^{-} & \rho_{12}(e) \\ 0 & +h_{1}e\circ\pi^{+}  \end{pmatrix}
$$
$$
\rho(\mathrm{p})=2\sqrt{2}h_{1}\begin{pmatrix} 0 & \pi^{+} \\ 0 & 0 \end{pmatrix}
\phantom{c},\phantom{c}
\rho(\q)=\left(
\begin{array}{cc}
\q_{11} & \q_{12} \\
\q_{21} & \q_{22} \\
\end{array}
\right)
$$
where $\rho_{12}\in\Hom_{\bR}(\bR^{0,8},\End_{\bR}(S_{0,8}))$ is an element of the $\so(0,8)$-module which consists of suitable maps (see Proposition \ref{S}) and $\sqrt{2}\q_{21}=-h_{1}\pi^{-}$ (see Lemma \ref{reg}).
\begin{theorem}
\label{check}
A CW algebra $\gg_{b}=\gh+\gm=E^{*}+(E+\bR\p+\bR\q)$ with $\dim E=n-1=8$ admits non-zero charge spin representations if and only if $B=\pm\Id$. In this case a representation is given by
$$
\phantom{ccccccccccccc}\rho(e^{\flat})=\sqrt 2\begin{pmatrix} 0 & Be \\ 0 & 0 \end{pmatrix}\phantom{c}
,\phantom{c}
\rho(e)=\sqrt{2}\begin{pmatrix} -e\circ\pi^{-} & 0 \\ 0 & +e\circ\pi^{+}  \end{pmatrix}
$$
\be
\label{formuloneoneone}
\phantom{cccccccc}\rho(\mathrm{p})=4\begin{pmatrix} 0 & \pi^{+} \\ 0 & 0 \end{pmatrix}
\phantom{c},\phantom{c}
\rho(\q)=\left(
\begin{array}{cc}
0 & \pm\Id \\
-\pi^{-} & 0 \\
\end{array}
\right)
\ee
\end{theorem}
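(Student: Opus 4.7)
The plan is to analyze the system (\ref{n1})--(\ref{n4}) block by block with respect to the decomposition $S_{0,8}=S_{0,8}^{+}\oplus S_{0,8}^{-}$. Since $\rho_{11}(e)=-h_{1}e\pi^{-}$, $\rho_{22}(e)=h_{1}e\pi^{+}$ and $\sqrt{2}\q_{21}=-h_{1}\pi^{-}$ are already fixed by the underlying non-zero charge representation of the Heisenberg algebra, the unknowns are the four blocks of each of $\q_{11},\q_{22},\q_{12}$. Comparing entries in (\ref{n2}) and (\ref{n3}) and using the surjectivity of Clifford multiplication $E\otimes S^{\mp}\to S^{\pm}$ forces the off-diagonal blocks $\q_{11}^{-+}$ and $\q_{22}^{+-}$ to vanish; combined with (\ref{n1}), the surviving diagonal blocks of $\q_{11}$ and $\q_{22}$ on $S^{+}$ turn out to intertwine the irreducible $\Cl_{0,8}^{\circ}$-module structure of $S^{+}$, hence are equal and scalar by Schur's Lemma: $\q_{11}^{++}=\q_{22}^{++}=\tfrac{\lambda}{2}\Id$ for some $\lambda\in\bR$, and analogously $\q_{11}^{--}+\q_{22}^{--}=\lambda\Id$ on $S^{-}$.

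With $\q_{11},\q_{22}$ in this normal form, (\ref{n1}) determines $\rho_{12}(e)$ explicitly; substitution into the $(2,1)$-block of (\ref{n4}) yields, as an identity of maps $S^{+}\to S^{-}$,
\begin{equation}
\label{Fplan}
F\circ e_{+}=\sqrt{2}\,(Be)_{+}\qquad\text{for every }e\in E,
\end{equation}
where $F:S^{-}\to S^{-}$ is the $e$-independent operator $h_{1}\q_{12}^{--}-\sqrt{2}(\q_{11}^{--}-\tfrac{\lambda}{2}\Id)^{2}$. The crucial observation is that for each unit vector $e_{i}$ of the $B$-eigenbasis, the relation $e_{i}^{2}=-1$ makes Clifford multiplication by $e_{i}$ a linear isomorphism $S^{+}\cong S^{-}$; specialising (\ref{Fplan}) to $e=e_{i}$ and cancelling this isomorphism on the right gives $F=\sqrt{2}\,b_{i}\Id$ for every $1\le i\le 8$. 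Hence $b_{1}=\cdots=b_{8}$, and the normalization $\sum b_{i}^{2}=8$ from (\ref{prima}) forces $B=\pm\Id$, proving the \emph{only if} direction.

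For the converse, I would verify directly that the ansatz (\ref{formuloneoneone}) satisfies (\ref{n1})--(\ref{n4}). With $\q_{11}=\q_{22}=0$, $\q_{21}=-\pi^{-}$ and $\q_{12}=\pm\Id$, equation (\ref{n1}) gives $\rho_{12}=0$, so (\ref{n2}) and (\ref{n3}) become trivial, and (\ref{n4}) collapses to $\q_{12}\rho_{22}(e)-\rho_{11}(e)\q_{12}=\pm h_{1}e$; this equals $\sqrt{2}Be$ precisely when $h_{1}=\sqrt{2}$ (as dictated by $\rho(\p)=4\pi^{+}$) and $B=\pm\Id$ with matching signs. Consistency of $\rho(\q)$ with the remaining brackets $[\p,\q]=0$ and $[e^{\flat},\q]=Be$ of $\gg_{b}$ is automatic for this ansatz.

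The main obstacle will be the careful block-matrix bookkeeping in the first step, in particular applying Schur's Lemma correctly to each of the irreducible semi-spin $\Cl_{0,8}^{\circ}$-modules; once the factorisation (\ref{Fplan}) is isolated, the rigidity of $B$ follows cleanly from the invertibility of Clifford multiplication by a unit vector, a feature specific to dimensions in which semi-spinors exist.
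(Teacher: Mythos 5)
Your argument follows essentially the same route as the paper's proof: a block decomposition of $\q_{11},\q_{12},\q_{21},\q_{22},\rho_{12}(e)$ with respect to the semi-spin splitting $S_{0,8}=S_{0,8}^{+}\oplus S_{0,8}^{-}$, vanishing of the off-diagonal blocks of $\q_{11},\q_{22}$ from (\ref{n2})--(\ref{n3}), scalarity of the $(+,+)$-blocks via Schur's Lemma on the real-type irreducible module $S_{0,8}^{+}$, and then extraction of $B=\pm\Id$ from the $S^{+}\to S^{-}$ block of (\ref{n4}) using invertibility of unit-vector Clifford multiplication and the normalization $\sum b_{i}^{2}=8$, together with a direct verification of the ansatz (\ref{formuloneoneone}) for the converse. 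The only cosmetic slip is the word ``surjectivity'' of $E\otimes S^{\mp}\to S^{\pm}$ where what you actually use is injectivity (indeed invertibility) of $e_{i}\cdot$ for a unit vector, but this does not affect the argument.
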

\begin{proof}
Every endomorphism of the spin module $S_{0,8}$ is decomposed according to (\ref{subito}). For example, $\q_{11}$ is identified with a $2\times 2$ matrix 
$$
\q_{11}=\begin{pmatrix} \q_{11}^{++} & \q_{11}^{+-} \\ \q_{11}^{-+} & \q_{11}^{--} \end{pmatrix}
$$
and 
$$
\rho_{12}(e)=\begin{pmatrix} \rho_{12}(e)^{++} & \rho_{12}(e)^{+-}\\ \rho_{12}(e)^{-+} & \rho_{12}(e)^{--} \end{pmatrix}
$$ 
for every $e\in E\cong \bR^{0,8}$. Equation (\ref{n2}) implies that
$$
\sqrt{2}(\q_{11}\circ e\cdot\circ\pi^{-}-e\cdot\circ\pi^{-}\circ\q_{11})=\rho_{12}(e)\circ\pi^{-}
$$
which is equivalent to
$$
\pi^{-}\circ\q_{11}|_{S^{+}}=\q_{11}^{-+}=0\quad,\quad \sqrt{2}(\q_{11}|_{S^{+}}\circ e\cdot-e\cdot\circ\pi^{-}\circ\q_{11}|_{S^{-}})=\rho_{12}(e)|_{S^{-}}
$$
\textit{i.e.} in matrix notation 
$$
\q_{11}=\begin{pmatrix} \q_{11}^{++} & \q_{11}^{+-} \\ 0 & \q_{11}^{--} \end{pmatrix}
\phantom{c},\phantom{c}\rho_{12}(e)=\begin{pmatrix} \rho_{12}(e)^{++} & \sqrt{2}(q_{11}^{++}\circ e\cdot-e\cdot\circ\q_{11}^{--}) \\ \rho_{12}(e)^{-+} & 0 \end{pmatrix}\qquad.
$$
Equation (\ref{n3}) implies that
$$
\sqrt{2}(\q_{22}\circ e\cdot\circ\pi^{+}-e\cdot\circ\pi^{+}\circ\q_{22})=\pi^{-}\circ\rho_{12}(e)
$$
which is equivalent to
$$
\pi^{+}\circ\q_{22}|_{S^{-}}\q_{22}^{+-}=0\quad,\quad \sqrt{2}(\q_{22}^{--}\circ e\cdot-e\cdot\circ\phantom{c}\q_{22}^{++})=\rho_{12}(e)^{-+}
$$
\textit{i.e.} in matrix notation
$$
\q_{22}=\begin{pmatrix} \q_{22}^{++} & 0 \\ \q_{22}^{-+} & \q_{22}^{--} \end{pmatrix}
$$
\be
\label{modulo}
\rho_{12}(e)=\begin{pmatrix} \rho_{12}(e)^{++} & \sqrt{2}(q_{11}^{++}\circ e\cdot-e\cdot\circ\q_{11}^{--}) \\ \sqrt{2}(\q_{22}^{--}\circ e\cdot-e\cdot\circ\phantom{c}\q_{22}^{++}) & 0 \end{pmatrix}
\ee
Equation (\ref{n1}) implies that (\ref{modulo}) coincides with
$$
\sqrt{2}\begin{pmatrix} e\cdot\circ\phantom{c}\q_{22}^{-+}-\q_{11}^{+-}\circ e\cdot\phantom{cccc} & \phantom{cccc}e\cdot\circ\phantom{c}\q_{22}^{--} -\q_{11}^{++}\circ e\cdot \\ e\cdot\circ\phantom{c}\q_{22}^{++}-\q_{11}^{--}\circ e\cdot \phantom{cccc} & \phantom{cccc}0 \end{pmatrix}\qquad.
$$
In particular, from the off-diagonal entries, we get that
$$
2e\cdot\circ\phantom{c}\q_{22}^{++}=(\q_{11}^{--}+\q_{22}^{--})\circ e\cdot\qquad,\qquad 2\q_{11}^{++}\circ e \cdot=e\cdot\circ (\q_{11}^{--}+\q_{22}^{--})\qquad.
$$
It follows that the elements $\q_{11}^{++},\q_{22}^{++}$ belong to $\End_{\bR}(S_{0,8}^{+})^{\so(0,8)}$ \textit{i.e.} they commute with the action of $\so(0,8)$ on the $\so(0,8)$-irreducible module of real type $S^{+}_{0,8}$. This implies that $\q_{11}^{++},\q_{22}^{++}$
are scalars satisfying
$$
2\q_{22}^{++}=2\q_{11}^{++}=\q_{11}^{--}+\q_{22}^{--}\in\bR\qquad.
$$
The only non-trivial diagonal entry of (\ref{modulo}) implies that 
$$
\rho_{12}(e)^{++}=\sqrt{2}(e\cdot\circ\phantom{c}\q_{22}^{-+}-\q_{11}^{+-}\circ e\cdot)\qquad.
$$ 
Summarizing we get that
$$\q_{11}=\begin{pmatrix} \q_{11}^{++} & \q_{11}^{+-} \\ 0 & \q_{11}^{--} \end{pmatrix}
\phantom{c},\phantom{c}\q_{22}=\begin{pmatrix} \q_{11}^{++} & 0 \\ \q_{22}^{-+} & \q_{22}^{--} \end{pmatrix}$$
\be
\label{suitsuit}
\rho_{12}(e)=\sqrt{2}\begin{pmatrix} e\cdot\circ\phantom{c}\q_{22}^{-+}-\q_{11}^{+-}\circ e\cdot & q_{11}^{++}\circ e\cdot-e\cdot\circ\q_{11}^{--} \\ \q_{22}^{--}\circ e\cdot-e\cdot\circ\phantom{c}\q_{11}^{++} & 0 \end{pmatrix}
\ee
where
$
2\q_{11}^{++}=\q_{11}^{--}+\q_{22}^{--}\in\bR
$. 
We solve equation (\ref{n4}). It involves the endomorphism 
$$\q_{12}=\begin{pmatrix} \q_{12}^{++} & \q_{12}^{+-} \\ \q_{12}^{-+} & \q_{11}^{--} \end{pmatrix}
$$
and implies the stated restrictions on the symmetric operator $B$. An explicit calculation shows that (\ref{n4}) is then equivalent to
\be
\label{3333}
\sqrt{2}(\q_{11}^{--}\q_{22}^{--}-(\q_{11}^{++})^{2})\circ e\cdot+h_{1}\q_{12}^{--}\circ e\cdot=\sqrt{2}Be|_{S^{+}}
\ee
$$
\sqrt{2}(\q_{11}^{+-}\q_{22}^{--}-\q_{11}^{+-}\q_{11}^{++})e\cdot+h_{1}q_{12}^{+-}e\cdot=
$$
\be
\label{3333c}
e\cdot\circ\sqrt{2}(\q_{11}^{++}\q_{22}^{-+}-\q_{11}^{--}\q_{22}^{-+})-e\cdot\circ\pc h_{1}q_{12}^{-+}
\ee
Evaluating equation (\ref{3333}) on $e_{i}$ for $1\leq i\leq 8$, we get that
\be
\label{èdef3}
\sqrt{2}(\q_{11}^{--}\q_{22}^{--}-(\q_{11}^{++})^{2})+h_{1}\q_{12}^{--}=\sqrt{2}b_{i}
\ee
Since the left-hand side of (\ref{èdef3}) does not depend on $1\leq i\leq n-1$, it follows that $B=\pm\Id$ (recall assumptions (\ref{prima}) on B). Both terms of equation (\ref{3333c}) are $\so(0,8)$-compatible linear maps between the two irreducible inequivalent $\so(0,8)$-modules $S_{0,8}^{\pm}$. This implies that
\be
\label{èdef}
\sqrt{2}(\q_{11}^{+-}\q_{22}^{--}-\q_{11}^{+-}\q_{11}^{++})+h_{1}q_{12}^{+-}=0
\ee
\be
\label{èdef2}
\sqrt{2}(\q_{11}^{++}\q_{22}^{-+}-\q_{11}^{--}\q_{22}^{-+})- h_{1}q_{12}^{-+}=0
\ee
Equations (\ref{èdef3}), (\ref{èdef}) and (\ref{èdef2}) define three linear maps $\q_{12}^{--}$, $\q_{12}^{+-}$ and $\q_{12}^{-+}$. To complete the proof of the theorem, we only need to check that there exist appropriate $\q_{11}^{++},\q_{11}^{+-},\q_{11}^{--},\q_{22}^{-+},\q_{22}^{--}$ such the action of $E$ on $S$
$$
\rho(e)=\begin{pmatrix} -h_{1}e\circ\pi^{-} & \rho_{12}(e) \\ 0 & +h_{1}e\circ\pi^{+}  \end{pmatrix}
$$
is associated with a suitable map (\ref{suitsuit}). The trival solution $\q_{11}=\q_{22}=0$ works.
\end{proof}
\begin{remark}\rm{
One can check that
\begin{enumerate}
\item[i)]
Theorem \ref{check} holds for any CW algebra with $\dim E\equiv 8$ (mod\pc $8$),
\item[ii)] Formula (\ref{formuloneoneone}) gives non-zero charge spin representation of a CW algebra $\gg_{b}$ whenever $b=\pm\Id$ and semi-spinors exist. 
\end{enumerate}
}\end{remark}
\section{Superization of the Cahen-Wallach algebra}
\label{dan2}
\setcounter{equation}{0}
In the following theorem the triple (\ref{triple2}) is defined up to elements of the kernel of the spin representation $\rho_{0,n-1}:\Cl_{0,n-1}\rightarrow\End_{\bR}(S_{0,n-1})$, if it exists.
\begin{theorem}
\label{zzrr}
Every zero charge superization with translational supersymmetry $$\gg=\gg_{\0}+\gg_{\ou}=\gg_{b}+S$$  of a $(n+1)$-dimensional $CW$ algebra $$\gg_{b}=\gh+\gm=E^{*}+(E+\bR\p+\bR\q)$$ is uniquely determined by a solution
\be
\label{triple2}
(c_{1},c_{2})\in(\Cl_{0,n-1})^{2}
\ee
of the {\bf quadratic Clifford equation} (\ref{QUAD}), by an element $c_{3}\in\Cl_{0,n-1}$ and by two bilinear forms 
\be
\label{twobil}
\p_{-+}\in\Bil_{\bR}(S_{0,n-1})^{\tau\sigma=-1}\quad,\quad \p_{++}\in S_{0,n-1}^{*}\vee S_{0,n-1}^{*}
\ee
which satisfy the {\bf compatibility conditions} ($s,t\in S_{0,n-1}$)
$$
\p_{-+}(c_{3}s,t)+\p_{-+}(c_{3}t,s)+\p_{++}(c_{2}s,t)+\p_{++}(s,c_{2}t)=0
$$
$$
\pc\pc\pc\p_{-+}(c_{1}s,t)+\p_{-+}(s,c_{2}t)=0\pc\pc\pc.
$$
The associated zero charge superization is given by
$$
\ad_{\mathrm{p}}|_{S}=\begin{pmatrix} 0 & 0 \\ 0 & 0 \end{pmatrix}\quad,\quad
\ad_{\q}|_{S}=\begin{pmatrix} c_{1} & c_{3} \\ 0 & c_{2} \end{pmatrix}
$$
$$
\phantom{cccccc}\ad_{e^{\flat}}|_{S}=\sqrt{2}\begin{pmatrix} 0 & Be \\ 0 & 0 \end{pmatrix}\phantom{cccc}\quad,\quad
\ad_{e}|_{S}=\sqrt{2}\begin{pmatrix} 0 & e\cdot\circ\phantom{c} c_{2}-c_{1}\circ\phantom{c}e\cdot\\ 0 & 0 \end{pmatrix}
$$
$$
[Q,Q]=\left(\p_{++}(Q_{+},Q_{+})+2\p_{-+}(Q_{-},Q_{+})\right)\p
$$
where $e\in E$ and 
$Q=\left(
\begin{array}{c}
Q_{-} \\
Q_{+} \\
\end{array}
\right)\in S=S_{-}+S_{+}=S_{0,n-1}+S_{0,n-1}\quad.
$
\end{theorem}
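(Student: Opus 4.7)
The plan is to separate the purely representation-theoretic content from the bilinear-form analysis. Theorem \ref{iiii} already parametrizes the zero charge spin representations of $\gg_{b}$ on $S$ by triples $(c_{1},c_{2},c_{3})\in(\Cl_{0,n-1})^{3}$ with $(c_{1},c_{2})$ solving the quadratic Clifford equation, and this yields the displayed formulas for $\ad_{\mathrm{p}},\ad_{\mathrm{q}},\ad_{e^{\flat}},\ad_{e}$ together with the stated uniqueness up to $\Ker\rho_{0,n-1}$. By Lemma \ref{bilinear}, translational supersymmetry forces $[Q,\tilde{Q}]=\p(Q,\tilde{Q})\,\mathrm{p}$ for a uniquely determined symmetric bilinear form $\p\in\Bil_{\bR}(S)$, so the remaining task is to characterize which $\p$ extend the given data to a Lie superalgebra structure.

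First I would reduce the graded Jacobi identities to a single invariance condition. The odd-odd-odd Jacobi $[Q,[Q,Q]]=0$ is automatic because $[Q,Q]\in\bR\mathrm{p}$ and $\rho(\mathrm{p})=0$ by zero charge. For $X\in\gg_{b}$ the mixed Jacobi reads
\[
\p(Q,\tilde{Q})\,[X,\mathrm{p}]\;=\;\bigl(\p(\rho(X)Q,\tilde{Q})+\p(Q,\rho(X)\tilde{Q})\bigr)\,\mathrm{p};
\]
since $\mathrm{p}$ is central in $\gg_{b}$ the left-hand side vanishes, so the condition collapses to $\gg_{b}$-invariance of $\p$. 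I would then dissect this invariance generator by generator: invariance under $\mathrm{p}$ is vacuous; invariance under $E^{*}$ is, via the non-degeneracy of $B$, precisely the hypothesis of Lemma \ref{MM}, and yields $\p_{--}=0$, $\p_{-+}\in\Bil_{\bR}(S_{0,n-1})^{\tau\sigma=-1}$, and $\p_{++}$ arbitrary symmetric. For $\mathrm{q}$-invariance I would write out $\p(\rho(\mathrm{q})Q,\tilde{Q})+\p(Q,\rho(\mathrm{q})\tilde{Q})=0$ using the block form of $\rho(\mathrm{q})$: setting $Q_{+}=0=\tilde{Q}_{-}$ isolates the second listed compatibility condition, and setting $Q_{-}=0=\tilde{Q}_{-}$ isolates the first, while the remaining pairings are trivial.

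The main obstacle is showing that $E$-invariance of $\p$ is a consequence of $E^{*}$-invariance and the $\mathrm{q}$-invariance conditions, so that no extra constraint appears. Because $\rho(e)$ sends $S_{+}$ into $S_{-}$ via the operator $\sqrt{2}(e\cdot c_{2}-c_{1}\cdot e)$ and annihilates $S_{-}$, $E$-invariance reduces to the identity
\[
\p_{-+}\bigl((e c_{2}-c_{1}e)Q_{+},\tilde{Q}_{+}\bigr)+\p_{-+}\bigl((e c_{2}-c_{1}e)\tilde{Q}_{+},Q_{+}\bigr)=0.
\]
I would derive this by combining the admissibility identity $\p_{-+}(eX,Y)+\p_{-+}(eY,X)=0$ (the $\tau\sigma=-1$ condition on $\p_{-+}$) with the second compatibility condition $\p_{-+}(c_{1}X,Y)+\p_{-+}(X,c_{2}Y)=0$: substituting $X=eY'$ in the latter and moving the Clifford action across via admissibility yields $\p_{-+}(c_{1}eY',\tilde{Q}_{+})=\p_{-+}(e c_{2}\tilde{Q}_{+},Y')$, after which the four terms of the displayed identity cancel in pairs. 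The converse direction is then a direct check that the listed data defines a Lie superalgebra: the formulas for $\ad_{X}$ produce a representation by Theorem \ref{iiii}, and the computations above show that the induced $\p$ is $\gg_{b}$-invariant, so all super Jacobi identities hold.
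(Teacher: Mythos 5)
Your proposal is correct and follows essentially the same route as the paper's own (very terse) proof, which simply cites Theorem \ref{iiii}, Lemma \ref{bilinear}, Lemma \ref{MM}, and remarks that the compatibility conditions encode $\q$-invariance while $\q$-invariance together with $E^{*}$-invariance forces $E$-invariance. What you have done is spell out exactly those implicit steps — the reduction of the super Jacobi identities to $\gg_{b}$-invariance of $\p$, the block computation isolating the two compatibility conditions from $\q$-invariance, and the cancellation argument using $\tau\sigma=-1$ together with the second compatibility condition to derive $E$-invariance — all of which check out.
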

\begin{proof}
The theorem is a straightforward consequence of Theorem \ref{iiii}, Lemma \ref{bilinear} and Lemma \ref{MM}.
We only need to remark that the compatibility conditions are equivalent to $\q$-invariance for the bracket of two odd elements and that $\q$-invariance together with $E^{*}$-invariance imply $E$-invariance.
\end{proof}
\font\smallit = cmti8
{\smallit Acknowledgement}\/. {\small The author is grateful to the University of Florence and I.N.d.A.M. for financial support during his stay in Edinburgh and to the University of Edinburgh for hospitality. This paper is essentially one chapter of the Ph.D. thesis the author wrote under the supervision of D.\ V.\ Alekseevsky. The author would like to thank him and A.\ Spiro for their constant encouraging support.}


\bigskip
\bigskip
\font\smallsmc = cmcsc8
\font\smalltt = cmtt8
\font\smallit = cmti8
\hbox{\parindent=0pt\parskip=0pt
\vbox{\baselineskip 9.5 pt \hsize=3.1truein
\obeylines
{\smallsmc
Andrea Santi
Dept. of Mathematics
University of Florence
Viale Morgagni 67/a  
Florence 50134 
Italy
Phone +390554237111
Fax +390554222695
}\medskip
{\smallit E-mail}\/: {\smalltt santi@math.unifi.it}}}


\begin{thebibliography}{ACDS}
\bigskip
\bibitem[1]{AC} D.\ V.\ Alekseevsky and V.\ Cortes, {\it Classification of N-(Super)-Extended Poincar\'e Algebras and Bilinear Invariants of the Spinor Representation of Spin(p,q)}, Comm.Math.Phys. {\bf 183} (1997) 477 - 510.
\bibitem[2]{ACDP} D.\ V.\ Alekseevsky, V.\ Cortes, C.\ Devchand, A.\ Proeyen, {\it Polyvector Super-Poincar\'e Algebras},
Comm.Math.Phys. {\bf 253} (2005) 385 - 422.
\bibitem[3]{ACDS} D.\ V.\ Alekseevsky, V.\ Cortes, C.\ Devchand, U.\ Semmelmann, {\it Killing spinors are Killing vector fields in Riemannian supergeometry}, Journ.Geom.Phys. {\bf 26} (1998) 37 - 50.
\bibitem[4]{CKN} E.\ Chang-Young, H.\ Kim, H.\ Nakajima, {\it Noncommutative superspace and super Heisenberg group}, J.High Energy Phys. {\bf 4} (2008) 16 pp.
\bibitem[5]{CJS} E.\ Cremmer, B.\ Julia, J.\ Scherk, {\it Supergravity Theory in 11 dimensions}, Phys.Lett.B {\bf 76} (1978) 409 - 412.
\bibitem[6]{CW} M.\ Cahen and N.\ Wallach, {\it Lorentzian Symmetric Spaces}, Bull.Amer.Math.Soc. {\bf 76} (1970) 585 - 591.
\bibitem[7]{DFLV} R.\ D'Auria, S.\ Ferrara, M.\ A.\ Lledo, V.\ S.\ Varadarajan, {\it Spinor algebras}, J.Geom.Phys. {\bf 40} (2001) 101 - 129.
\bibitem[8]{FGH} S.\ Fernando, M.\ Gunaydin, S.\ Huyn, {\it Oscillator construction of spectra of pp-wave superalgebras in eleven dimensions}, Nucl.Phys.B {\bf 727} (2005) 421 - 460.
\bibitem[9]{FP} J.\ Figueroa-O'Farrill, G.\ Papadopoulos, {\it Homogeneous fluxes, branes and a maximally supersymemtric solution of M-theory}, J.High Energy Phys. {\bf 8} (2001) 26pp.
\bibitem[10]{H} F.\ R.\ Harvey, {\it Spinors and Calibrations}, Academic Press (1990).
\bibitem[11]{K} V.\ G.\ Kac, {\it Lie superalgebras}, Advances in Mathematics {\bf 26} (1977) 8 - 96.
\bibitem[12]{Kl1} F.\ Klinker, {\it Supersymmetric Killing structures}, Comm.Math.Phys. {\bf 255} (2005) 419 - 467.
\bibitem[13]{Kl2} F.\ Klinker, {\it SUSY structures on deformed supermanifolds}, Diff.Geom.Appl. {\bf 26} (2008) 566 - 582.
\bibitem[14]{Kt} B.\ Kostant, {\it Graded manifolds, graded Lie theory and prequantization}, Springer LNM {\bf 570} (1977) 177 - 306.
\bibitem[15]{Kz} J.\ L.\ Koszul, {\it Graded manifolds and graded Lie algebras}, Proceedings of the International Meeting on Geometry and Physics (Bologna), Pitagora (1982) 71 - 84.
\bibitem[16]{LM} H.\ B.\ Lawson and M.\ Michelson,
{\it Spin Geometry},
Princ.Univ.Press (1989).
\bibitem[17]{N} W.\ Nahm, {\it Supersymmetries and their representations}, Nucl.Phys.B {\bf 135} (1977) 149.
\bibitem[18]{OV} A.\ L.\ Onishchik, E.\ L.\ Vinberg, {\it Lie groups and algebraic groups}, Springer (1990). 
\bibitem[19]{S} A.\ Santi, {\it Invariant superconnections on homogeneous supermanifold and superizations of homogeneous manifold}, in preparation.
\bibitem[20]{Sc} M.\ Scheunert, {\it The theory of Lie superalgebras}, Lect.Notes in Math. {\bf 716} (1979).
\bibitem[21]{Z} F.\ Zhang,
{\it Quaternions and Matrices of Quaternions}, Linear Algebra and its Applic. {\bf 251}, (1997) 21 - 57.
\end{thebibliography}
\end{document}